\newtheorem{theorem}{Theorem}
\newtheorem{lemma}[theorem]{Lemma}
\newtheorem{assumption}[theorem]{Assumption}
\theoremstyle{definition}
\newtheorem{definition}[theorem]{Definition}
\newtheorem{proposition}[theorem]{Proposition}
\begin{document}                                                 
\title[Advection-Diffusion Equation on Evolving Surface with Boundary]{Local and Global Solvability for Advection-Diffusion Equation on an Evolving Surface with a Boundary}                                 
\author[Hajime Koba]{Hajime Koba}                                
\address{Graduate School of Engineering Science, Osaka University,\\
1-3 Machikaneyamacho, Toyonaka, Osaka, 560-8531, Japan}                                  
\email{iti@sigmath.es.osaka-u.ac.jp}                                      
\date{}                                       
\keywords{Advection-diffusion equation with variable coefficients, Time-dependent Laplace-Beltrami operator, Function spaces on evolving surfaces, Maximal $L^p$-regularity, Asymptotic stability}                            
\subjclass[2010]{35A01, 35R01, 35R37, 47D06}                                
\begin{abstract}
This paper considers the existence of local and global-in-time strong solutions to the advection-diffusion equation with variable coefficients on an evolving surface with a boundary. We apply both the maximal $L^p$-in-time regularity for Hilbert space-valued functions and the semigroup theory to construct local and global-in-time strong solutions to an evolution equation. Using the approach and our function spaces on the evolving surface, we show the existence of local and global-in-time strong solutions to the advection-diffusion equation. Moreover, we derive the asymptotic stability of the global-in-time strong solution. 
\end{abstract}       
\maketitle

\section{Introduction}\label{sect1}
We are interested in the existence of local and global-in-time solutions to the advection-diffusion equation on an evolving surface with a boundary. An evolving surface means that the surface is moving or the shape of the surface is changing along with the time. This paper has three purposes. The first one is to construct a strong solution of an evolution equation by applying both the maximal $L^p$-in-time regularity for Hilbert space-valued functions and the semigroup theory. The second one is to introduce and study function spaces on evolving surfaces. The third one is to apply our approach and function spaces to show the existence of local and global-in-time strong solutions to the advection-diffusion equation with variable coefficients on an evolving surface with a boundary.

Let us first introduce basic notations. Let $x = { }^t ( x_1 , x_2, x_3) \in \mathbb{R}^3$, $X = { }^t (X_1 , X_2) \in \mathbb{R}^2$ be the spatial variables, and $t , \tau \geq 0$ be the time variables. The symbols $\nabla$ and $\nabla_X$ are two gradient operators defined by $\nabla = { }^t (\partial_1 , \partial_2 , \partial_3)$ and $\nabla_X = { }^t (\partial_{X_1} , \partial_{X_2})$, where $\partial_j = \partial/{\partial x_j}$ and $\partial_{X_\alpha} = \partial /{ \partial X_\alpha}$. Let $T \in (0, \infty]$, and let $\Gamma (t) (= \{ \Gamma (t) \}_{0 \leq t < T}) $ be an evolving surface with a boundary $\partial \Gamma (t)$ such that for $0 \leq t < T$, $\Gamma (t) = \{ x \in \mathbb{R}^3; { \ }x = \widehat{x}(X,t), X \in U \}$, where $U \subset \mathbb{R}^2$ is a bounded domain with a $C^3$-boundary $\partial U$ and $\widehat{x} = { }^t ( \widehat{x}_1, \widehat{x}_2 , \widehat{x}_3) \in [C^3 ( \overline{U} \times [0, T ))]^3$.

This paper considers the existence of local and global-in-time strong solutions to the \emph{advection-diffusion equation on the evolving surface} $\Gamma (t)$:
\begin{equation}\label{eq11}
\begin{cases}
D_t^w u + ({\rm{div}}_\Gamma w ) u - {\rm{div}}_\Gamma ( \kappa \nabla_\Gamma u ) = 0& \text{ on } \Gamma_T,\\
u|_{\partial \Gamma_T} = 0,\\
u|_{t = 0} = u_0 & \text{ on } \Gamma_0 ,
\end{cases}
\end{equation}
where the unknown function $u = u (x,t)$ is the \emph{concentration of amount of a substance} on $\Gamma (t)$, while the given function $w = w (x,t) = { }^t (w_1,w_2,w_3)$ is the \emph{motion velocity} of $\Gamma (t)$, the given function $\kappa = \kappa (x,t)$ is the \emph{diffusion coefficient}, and $u_0 = u_0 (x)$ is the given \emph{initial datum}. Here $D_t^w u := \partial_t u + ( w \cdot \nabla ) u$, $\partial_t := \partial/{\partial t}$, $\Gamma_0 := \Gamma (0)$,
\begin{equation*}
\Gamma_T := \bigcup_{0 < t < T}\{ \Gamma (t) \times \{ t \} \}, \text{ and }\partial \Gamma_T := \bigcup_{0 < t < T}\{ \partial \Gamma (t) \times \{ t \} \}.
\end{equation*}
See Section \ref{sect2} for the notations $w$, ${\rm{div}}_\Gamma$, and $\nabla_\Gamma$.

Let us state mathematical analysis of system \eqref{eq11}. Dziuk-Elliott \cite{DE07} applied a Galerkin-type method to show the existence of a unique weak solution to system \eqref{eq11} on an evolving closed surface when $\kappa = 1$ and $u_0 \in W^{1,2}( \Gamma_0 )$. Alphonse-Elliott-Stinner \cite{AES15a}, \cite{AES15b} introduced their function spaces on evolving surfaces to study several parabolic PDEs on evolving surfaces. They applied their evolving Hilbert space to construct a weak solution of several PDEs on evolving surfaces. Djurdjevac \cite{Dju17} and Djurdjevac-Elliott-Kornhuber-Ranner \cite{DEKR18} studied system \eqref{eq11} with random coefficients on an evolving closed surface. They improved the approaches in \cite{AES15a}, \cite{AES15b} to show the existence of a unique mean-weak solution to system \eqref{eq11} when $u_0 \in W^{1,2} ( \Gamma_0)$. This paper attacks the existence of strong solutions to \eqref{eq11} with variable coefficients on an evolving surface with a boundary when $u_0 \in W^{1,2}_0 ( \Gamma_0 )$.

To study system \eqref{eq11}, we set $v = v (X,t) = u (\widehat{x} (X,t) ,t)$, $v_0 = v_0 (X) = u_0 ( \widehat{x} (X,0))$, and $\widehat{\kappa} = \widehat{\kappa} (X, t) = \kappa ( \widehat{x} (X,t) ,t)$. Then $v$ satisfies
\begin{equation}\label{eq12}
\begin{cases}
\partial_t v + L v = 0 & \text{ in } U \times (0,T),\\
v|_{\partial U} = 0 & \text{ on } (0,T),\\
v|_{t = 0} = v_0 & \text{ in } U.
\end{cases}
\end{equation}
Here
\begin{equation}\label{eq13}
L f = L (t) f := - \sum_{\alpha , \beta =1}^2 \frac{1}{\sqrt{ \mathcal{G}}} \frac{\partial}{\partial X_\alpha} \left\{ \widehat{\kappa} \sqrt{ \mathcal{G}} \mathfrak{g}^{\alpha \beta } \frac{\partial f }{\partial X_\beta} \right\} + \frac{1}{2 \mathcal{G}} \left( \frac{d \mathcal{G}}{d t} \right) f.
\end{equation}
See Section \ref{sect2} for the notations $\mathcal{G}$ and $\mathfrak{g}^{\alpha \beta}$. We call $L$ the \emph{Laplace-Beltrami operator} when $\widehat{\kappa} = 1$ and ${d \mathcal{G}}/{d t} = 0$. This paper constructs a strong solution to system \eqref{eq12} to show the existence of a strong solution to system \eqref{eq11}. Notice that system \eqref{eq11} is equivalent to system \eqref{eq12} by Definition \ref{def21} and an inverse function theorem when solutions to systems \eqref{eq11} and \eqref{eq12} are smooth functions. See Definition \ref{def22} and Section \ref{sect5} for the strong solutions to system \eqref{eq11}.

To construct solutions of system \eqref{eq12}, we have to deal with the term $ \widehat{\kappa} \mathfrak{g}^{\alpha \beta} \frac{\partial^2 f }{\partial X_\alpha \partial X_\beta}$ of \eqref{eq13}. However, it is not easy to derive some properties of the term since $\widehat{\kappa}$ and $\mathfrak{g}^{\alpha \beta}$ depend on both $t$ and $X$. This is the main difficulty in constructing strong solutions to \eqref{eq12}.

Let us explain the two key ideas for constructing strong solutions to system \eqref{eq12}. The first one is to apply some properties of the following linear operator:
\begin{equation}\label{eq14}
\begin{cases}
A f := - \left( \lambda_1 \frac{\partial^2}{\partial X_1^2} + \lambda_2 \frac{\partial^2}{\partial X_2^2} \right) f ,\\
D (A ) := L^2 (U) \cap W^{1,2}_0(U) \cap W^{2,2} (U). 
\end{cases}
\end{equation}
Here $\lambda_1 , \lambda_2 >0$. More precisely, we use the elliptic regularity of $A$, i.e., there is $C_\sharp >0$ such that for all $f \in D (A)$
\begin{equation}\label{eq15}
\| f \|_{L^2 (U)} + \| \nabla_X f \|_{L^2(U)} + \| \nabla_X^2 f \|_{L^2(U)} \leq C_\sharp \| A f \|_{L^2 (U)},
\end{equation}
and the maximal $L^2$-regularity of $A$, i.e., there is $C_A >0$ such that for each $(F ,V_0 ) \in L^2 (0,T;L^2(U)) \times D (A^{\frac{1}{2}})$ there exists a unique function $V$ satisfying the system:
\begin{equation*}
\begin{cases}
\frac{d}{d t}V + A V = F & \text{ on } (0, T),\\
V|_{t =0} = V_0,
\end{cases}
\end{equation*}
and the estimate:
\begin{multline}\label{eq16}
\left( \int_0^T \left\| \frac{d V}{d t} \right\|_{L^2(U)}^2 d t \right)^{\frac{1}{2}} + \left( \int_0^T \| A V \|_{L^2(U)}^2 d t \right)^{\frac{1}{2}} \\
\leq  \sqrt{2} \| A^{\frac{1}{2}} V_0 \|_{L^2(U)} +C_A \left( \int_0^T \| F \|_{L^2(U)}^2 d t \right)^{\frac{1}{2}}. 
\end{multline}
Remark that the two positive constants $C_\sharp$, $C_A$ depend only on $(\lambda_1 , \lambda_2 , U)$. Remark also that we can write
\begin{equation*}
C_\sharp = \frac{C_\sharp'( q , U)}{\lambda_1} \text{ and } C_A= \frac{C_A'( q , U)}{\lambda_1} 
\end{equation*}
when $\lambda_2 = q \lambda_1$ for some $q >0$. See Section \ref{sect3} for details.

The second one is to consider the following approximate equations:
\begin{equation*}
\begin{cases}
\frac{d}{d t} v_1 + A v_1 = 0 \text{ on }(0,T),\\
v_1 |_{t=0 } = v_0,
\end{cases}{ \ }\begin{cases}
\frac{d}{d t} v_{m+1} + A v_{m+1} = -B v_m \text{ on }(0,T),\\
v_{m+1} |_{t=0 } = v_0.
\end{cases}
\end{equation*}
Here $B = B (t)$ is the evolution operator defined by $B = L - A$. Using the approximate equations, the maximal $L^2$-regularity of $A$, the semigroup theory, and the norm $\| \cdot \|_{Z_T}$:
\begin{equation*}
\| \varphi \|_{Z_T} := \sup_{0 < t <T}\{ \mathrm{e}^{- t} \| \varphi \|_{L^2 (U)} \} + \| d \varphi /{d t} \|_{L^2 (0,T;L^2(U))} + \| A \varphi \|_{L^2 (0,T;L^2(U))},
\end{equation*}
we construct local and global-in-time strong solutions to system \eqref{eq12}. This is the key approximation method of this paper. See Sections \ref{sect6} and \ref{sect7} for details.

The outline of this paper is as follows: In Section \ref{sect2} we state the assumptions of our evolving surface $\Gamma (t)$ and the main results of this paper. In Section \ref{sect3} we study differential operators on the evolving surface $\Gamma (t)$ and the elliptic operator $A$ defined by \eqref{eq14}. In Section \ref{sect4} we introduce and study function spaces on the evolving surface $\Gamma (t)$. In Section \ref{sect5} we study basic properties of strong solutions to the advection-diffusion equation \eqref{eq11}. In Section \ref{sect6} we construct strong solutions to an evolution equation by the maximal $L^p$-regularity and semigroup theory. In Section \ref{sect7} we show the existence of local and global-in-time strong solutions to the advection-diffusion equation \eqref{eq11} on the evolving surface $\Gamma (t)$. In the Appendix, we introduce the maximal $L^p$-regularity and the basic semigroup theory.

\section{Main Results}\label{sect2}
We first introduce the definition of our evolving surface with a boundary and notations. Then, we state the main results of this paper.
\begin{definition}[Evolving surface with boundary]\label{def21}
Let $T \in (0, \infty]$. For $0 \leq t < T$, let $\Gamma (t) \subset \mathbb{R}^3$ be a set. We call $\Gamma (t) (= \{ \Gamma (t) \}_{0 \leq t <T} )$ an \emph{evolving surface with a boundary} if the following five properties hold:\\
$(\mathrm{i})$ For every $0 \leq t < T$, $\Gamma (t)$ can be written by
\begin{equation*}
\Gamma (t) = \{ x = { }^t ( x_1 , x_2 , x_3 ) \in \mathbb{R}^3; { \ }x = \widehat{x} (X,t) , X \in U \},
\end{equation*}
where $U \subset \mathbb{R}^2$ is a bounded domain with a $C^3$-boundary $\partial U$ and $\widehat{x} = { }^t ( \widehat{x}_1 , \widehat{x}_2 , \widehat{x}_3) \in [ C^3 ( \overline{U} \times [0,T ))]^3$.\\
$(\mathrm{ii})$ For each $0 \leq t <T$,
\begin{equation*}
\widehat{x} ( \cdot , t) : \overline{U} \to \overline{\Gamma (t)} \text{ is bijective}.
\end{equation*}
$(\mathrm{iii})$ For every $0 \leq t < T$, the boundary $\partial \Gamma (t)$ of $\Gamma (t)$ can be written by
\begin{equation*}
\partial \Gamma (t) = \{ x \in \mathbb{R}^3; { \ } x = \widehat{x} (X, t ), X \in \partial U \}.
\end{equation*}
$(\mathrm{iv})$ There is $\lambda_{min} >0$ such that for every $0 \leq t < T$ and $X \in \overline{U}$,
\begin{multline*}
\left( \frac{\partial \widehat{x}_2}{\partial X_1} \frac{\partial \widehat{x}_3}{\partial X_2} - \frac{\partial \widehat{x}_2}{\partial X_2} \frac{\partial \widehat{x}_3}{\partial X_1} \right)^2 + \left( \frac{\partial \widehat{x}_1}{\partial X_1} \frac{\partial \widehat{x}_3}{\partial X_2} - \frac{\partial \widehat{x}_1}{\partial X_2} \frac{\partial \widehat{x}_3}{\partial X_1} \right)^2\\
 + \left( \frac{\partial \widehat{x}_1}{\partial X_1} \frac{\partial \widehat{x}_2}{\partial X_2} - \frac{\partial \widehat{x}_1}{\partial X_2} \frac{\partial \widehat{x}_2}{\partial X_1} \right)^2 \geq \lambda_{min} .
\end{multline*}
$(\mathrm{v})$ There is $\lambda_{max} >0$ such that for every $0 \leq t < T$, $X \in \overline{U}$, $j=1,2,3$, and $\alpha , \beta =1,2$,
\begin{equation*}
\left| \frac{\partial \widehat{x}_j}{\partial X_\alpha} \right| + \left| \frac{\partial^2 \widehat{x}_j}{\partial X_\alpha \partial X_\beta} \right| + \left| \frac{\partial^2 \widehat{x}_j}{\partial t \partial X_\alpha} \right| + \left| \frac{\partial^3 \widehat{x}_j}{\partial t \partial X_\alpha \partial X_\beta} \right| \leq \lambda_{max} < + \infty.
\end{equation*}
\end{definition}

Let us explain the conventions used in this paper. We use the Greek characters $\alpha , \beta, \zeta , \eta$ as $1, 2$. Moreover, we often use the following Einstein summation convention: $c_\alpha d_{\alpha \beta} = \sum_{\alpha = 1}^2 c_\alpha d_{\alpha \beta}$ and $c_\zeta d^{\zeta \eta} = \sum_{\zeta = 1}^2 c_\zeta d^{\zeta \eta}$. The symbol $d \mathcal{H}^k_x$ denotes the $k$-dimensional Hausdorff measure.

Next we define some notations. Let $T \in (0, \infty]$ and $\Gamma (t) (= \{ \Gamma (t) \}_{0 \leq t <T})$ be an evolving surface with a boundary. By definition, there are bounded domain $U \subset \mathbb{R}^2$ with a $C^3$-boundary and $\widehat{x} = \widehat{x} (X,t) = { }^t ( \widehat{x}_1 , \widehat{x}_2 , \widehat{x}_3 ) \in [ C^3 ( \overline{U} \times [0, T ) ) ]^3$ satisfying the properties as in Definition \ref{def21}. The symbol $n = n (x , t) = { }^t (n_1 , n_2 , n_3)$ denotes the unit outer normal vector at $x \in \overline{\Gamma (t)}$, the symbol $\nu = \nu ( x, t) = { }^t ( \nu_1 , \nu_2 , \nu_3)$ denotes the unit outer co-normal vector at $x \in \partial \Gamma (t)$, and the symbol $n^U = n^U (X) = { }^t (n_1^U , n_2^U)$ denotes the unit outer normal vector at $X \in \partial U$. Set
\begin{equation*}
\Gamma_T = \bigcup_{0 <  t  < T}\{ \Gamma (t) \times \{ t \} \} \text{ and }\overline{\Gamma_T} = \bigcup_{0 \leq  t  < T}\{ \overline{\Gamma (t)} \times \{ t \} \}.
\end{equation*}

Fix $j \in \{ 1 , 2 , 3 \}$. For $\psi \in C^1 (\mathbb{R}^3)$ and $\phi = { }^t ( \phi_1, \phi_2 , \phi_3) \in [C^1 (\mathbb{R}^3)]^3$,
\begin{align*}
\partial_j^\Gamma \psi & := \sum_{i=1}^3 (\delta_{i j} - n_i n_j ) \partial_i \psi,\\ 
\nabla_\Gamma & :=  { }^t ( \partial_1^\Gamma , \partial_2^\Gamma , \partial_3^\Gamma ),\\
{\rm{grad}}_\Gamma \psi & := \nabla_\Gamma \psi = { }^t (\partial_1^\Gamma \psi , \partial_2^\Gamma \psi , \partial_3^\Gamma \psi ) ,\\
{\rm{div}}_\Gamma \phi & := \nabla_\Gamma \cdot \phi = \partial_1^\Gamma \phi_1 + \partial_2^\Gamma \phi_2 + \partial_3^\Gamma \phi_3. 
\end{align*}
Here $\delta_{ij}$ is the Kronecker delta. We write $\widetilde{X} = \widetilde{X} (x,t)$ as the \emph{inverse mapping} of $\widehat{x} = \widehat{x}(X,t)$, i.e., for each $0 \leq t <T$,
\begin{align*}
\widetilde{X} ( \cdot , t ) &: \overline{\Gamma (t)} \to \overline{U},\\
\widehat{x}(\widetilde{X} (x, t), t) & = x \text{ for } x \in \overline{\Gamma (t)},\\
\widetilde{X}(\widehat{x}(X,t) , t) & = X \text{ for }X \in \overline{U}.
\end{align*}
Note that there exists an inverse mapping $\widetilde{X}$ of $\widehat{x}$ by Definition \ref{def21} and an inverse function theorem. Set
\begin{equation*}
w ( x,t) = \widehat{x}_t ( \widetilde{X}(x,t),t).
\end{equation*}
We call $w$ the \emph{motion velocity of the evolving surface $\Gamma (t)$}. We also call $w$ the \emph{speed of the evolving surface $\Gamma (t)$}.

We state the definition of a strong solution to system \eqref{eq11} with $u_0 \in W_0^{1,2} (\Gamma_0)$.
\begin{definition}[Strong solution to \eqref{eq11}]\label{def22}
Let $u_0 \in W^{1,2}_0 ( \Gamma_0)$ and
\begin{equation*}
u \in L^2(0,T:W^{1,2}_0 \cap W^{2,2} (\Gamma (\cdot ))) \cap W^{1,2}(0,T;L^2(\Gamma (\cdot ))).
\end{equation*}
We call $u$ a \emph{strong solution} of system \eqref{eq11} with initial datum $u_0$ if the function $u$ satisfies the following three properties:\\
$(\mathrm{i})$ (Equation)
\begin{equation*}
\left\| D_t^w u + ({\rm{div}}_\Gamma w )u - {\rm{div}}_\Gamma ( \kappa \nabla_\Gamma u ) \right\|_{L^2(0,T ; L^2( \Gamma (\cdot )))} = 0.
\end{equation*}
$(\mathrm{ii})$ (Boundary condition)
\begin{equation*}
\| \gamma u \|_{L^2 (0, T; L^2 (\partial \Gamma (\cdot )))} = 0.
\end{equation*}
Here $\gamma : W^{1,2} ( \Gamma (t)) \to L^2 ( \partial \Gamma (t) )$ is the trace operator defined by Proposition \ref{prop43}.\\
$(\mathrm{iii})$ (Initial condition) $\widehat{u} \in C ([0,T); L^2(U))$ and
\begin{equation*}
\lim_{t \to 0 + 0} \| u ( \widehat{x}(\cdot , t),t) - u_0 (\widehat{x} ( \cdot ,0)) \|_{L^2 (U)} = 0.
\end{equation*}
Here $\widehat{u} = \widehat{u} (X,t) := u( \widehat{x} (X,t) , t) $.
\end{definition}
\noindent See Section \ref{sect4} for function spaces on the evolving surface $\Gamma (t)$ and its norms. See also Section \ref{sect5} for the strong solutions of system \eqref{eq11}.

Before introducing the main results of this paper, we state the assumption of the evolving surface $\Gamma (t)$. For every $0 \leq t < T$ and $X \in \overline{U}$,
\begin{multline*}
\mathfrak{g}_1 = \mathfrak{g}_1 (X,t) := \frac{\partial \widehat{x}}{\partial X_1},{ \ }\mathfrak{g}_2 = \mathfrak{g}_2 (X,t) := \frac{\partial \widehat{x}}{\partial X_2},{ \ }\mathfrak{g}_{\alpha \beta} := \mathfrak{g}_\alpha \cdot \mathfrak{g}_\beta,\\
( \mathfrak{g}^{ \alpha \beta })_{2 \times 2} := ( \mathfrak{g}_{\alpha \beta })_{2 \times 2}^{-1} = \frac{1}{\mathfrak{g}_{11} \mathfrak{g}_{22} - \mathfrak{g}_{12} \mathfrak{g}_{21} } 
\begin{pmatrix}
\mathfrak{g}_{22} & - \mathfrak{g}_{21}\\
- \mathfrak{g}_{12} & \mathfrak{g}_{11}
\end{pmatrix},{ \ }\mathfrak{g}^\alpha := \mathfrak{g}^{\alpha \beta} \mathfrak{g}_\beta,\\
\mathcal{G} = \mathcal{G} ( X , t) := \mathfrak{g}_{11} \mathfrak{g}_{22} - \mathfrak{g}_{12} \mathfrak{g}_{21}.
\end{multline*}
Note that $\mathcal{G} = | \mathfrak{g}_1 \times \mathfrak{g}_2 |^2 =\mathfrak{g}_{11} \mathfrak{g}_{22} - \mathfrak{g}_{12} \mathfrak{g}_{21} = \lambda_{min} >0$. Write $\widehat{\kappa} = \widehat{\kappa} (X,t) = \kappa (\widehat{x} (X,t) , t)$. Throughout this paper, we assume the following restrictions.
\begin{assumption}\label{ass23}
$(\mathrm{i})$ Assume that $\kappa \in C^2 ( \overline{\Gamma_T})$. Here
\begin{equation*}
C^2 ( \overline{\Gamma_T}):= \{ \psi; { \ }\psi (x,t) = \varphi (\widetilde{X} (x,t), t ) , { \ }\varphi \in C^2 ( \overline{U} \times [0,T) ) \}.
\end{equation*}
$(\mathrm{ii})$ Assume that there are two positive constants $\kappa_{min}, \kappa_{max}$ such that for all $(X, t) \in \overline{U} \times [0, T )$ and $\alpha =1,2$,
\begin{equation*}
\kappa_{min} \leq \widehat{\kappa} \leq \kappa_{max} \text{ and }\left| \frac{\partial \widehat{\kappa}}{\partial t} \right| + \left| \frac{\partial \widehat{\kappa}}{\partial X_\alpha} \right| + \left| \frac{\partial^2 \widehat{\kappa}}{\partial t \partial X_\alpha} \right| < \kappa_{max}.
\end{equation*}
$(\mathrm{iii})$ Assume that there are two positive constants $\lambda_1, \lambda_2$ such that for all $(X, t) \in \overline{U} \times [0, T )$,
\begin{equation*}
\widehat{\kappa} \mathfrak{g}^{11} > \lambda_1 \text{ and }\widehat{\kappa} \mathfrak{g}^{12} > \lambda_2.
\end{equation*}
\end{assumption}
\noindent Remark that $\inf \mathfrak{g}^{11} \neq \inf \mathfrak{g}^{22}$ in general. For $0 \leq t < T$,
\begin{align*}
\mathcal{M}_1(t) &:= \left\| \widehat{\kappa} \frac{\mathfrak{g}_{11} }{\mathcal{G}} - \lambda_2 \right\|_{L^\infty (U)} + \left\| \widehat{\kappa} \frac{\mathfrak{g}_{22} }{\mathcal{G}} - \lambda_1 \right\|_{L^\infty (U)} + \left\| \widehat{\kappa} \frac{\mathfrak{g}_{12}}{ \mathcal{G}} \right\|_{L^\infty (U) },\\
\mathcal{M}_2 (t) & := \left\| \frac{\widehat{\kappa}}{ \mathcal{G} } \left( \frac{\partial \mathfrak{g}_{22}}{\partial X_1} - \frac{\partial \mathfrak{g}_{12}}{\partial X_2} \right) - \frac{\widehat{\kappa}}{2 \mathcal{G}^2} \left( \mathfrak{g}_{22} \frac{\partial \mathcal{G} }{\partial X_1} - \mathfrak{g}_{12} \frac{\partial \mathcal{G} }{\partial X_2} \right) \right\|_{L^\infty (U)} ,\\
\mathcal{M}_3 (t) & := \left\| \frac{\widehat{\kappa}}{ \mathcal{G} } \left( \frac{\partial \mathfrak{g}_{11}}{\partial X_2} - \frac{\partial \mathfrak{g}_{12}}{\partial X_1} \right) - \frac{\widehat{\kappa}}{2 \mathcal{G}^2} \left( \mathfrak{g}_{11} \frac{\partial \mathcal{G} }{\partial X_2} - \mathfrak{g}_{12} \frac{\partial \mathcal{G} }{\partial X_1} \right) \right\|_{L^\infty (U)} ,\\
\mathcal{M}_4 (t) &:= \left\| \frac{\mathfrak{g}_{22}}{ \mathcal{G}} \frac{\partial \widehat{\kappa}}{\partial X_1} -  \frac{\mathfrak{g}_{12}}{ \mathcal{G}} \frac{\partial \widehat{\kappa}}{\partial X_2} \right\|_{L^\infty (U)} + \left\| \frac{\mathfrak{g}_{11}}{ \mathcal{G}} \frac{\partial \widehat{\kappa}}{\partial X_1} -  \frac{\mathfrak{g}_{12}}{ \mathcal{G}} \frac{\partial \widehat{\kappa}}{\partial X_2} \right\|_{L^\infty (U)},\\
\mathcal{M}_5(t) &:= \left\| \frac{1}{ 2 \mathcal{G}} \frac{d \mathcal{G}}{d t} \right\|_{L^\infty (U)}, \text{ and }\mathcal{M}_{\mathfrak{j}} := \sup_{0 \leq t <T} \{ \mathcal{M}_{\mathfrak{j}} (t) \}{ \ }(\mathfrak{j}=1,2,3,4,5).
\end{align*}
Now we state the main results of this paper.
\begin{theorem}[Local existence $(\mathrm{I})$]\label{thm24}
Assume that $T < \infty$ and that
\begin{equation*}
 C_\sharp \mathcal{M}_1 ( C_A + 1 )  \leq \frac{1}{8 \sqrt{2}}.
\end{equation*}
Then for each $u_0 \in W_0^{1,2} ( \Gamma_0 )$, there exists a unique strong solution $u$ in
\begin{equation*}
L^2 (0,T_\star; W^{1,2}_0 \cap W^{2,2} ( \Gamma ( \cdot ) )) \cap W^{1,2} (0,T_\star ;L^2 (\Gamma ( \cdot )))
\end{equation*}
of system \eqref{eq11} with initial datum $u_0$, where
\begin{equation*}
T_\star = \min \left\{ T ,  \frac{1}{2} \log \left( 1 + \frac{1}{16  C_\star (C_A + 1)^2} \right) \right\}. 
\end{equation*}
Here $C_\sharp$, $C_A$, and $C_\star$ are the three positive constants in \eqref{eq15}, \eqref{eq16}, and \eqref{eq72}, respectively.
\end{theorem}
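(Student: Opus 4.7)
The plan is to reduce system \eqref{eq11} to the flat-domain problem \eqref{eq12} via the parametrization $\widehat{x}$, and to solve \eqref{eq12} by the iteration sketched in the introduction. Since $\widehat{x}(\cdot,t):\overline{U}\to\overline{\Gamma(t)}$ is a $C^3$-diffeomorphism with uniform Jacobian bound (Definition \ref{def21}(iv)), a strong solution $v\in L^2(0,T_\star;W^{1,2}_0\cap W^{2,2}(U))\cap W^{1,2}(0,T_\star;L^2(U))$ of \eqref{eq12} pulls back via $u(x,t):=v(\widetilde{X}(x,t),t)$ to a strong solution of \eqref{eq11} in the sense of Definition \ref{def22}. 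The task therefore reduces to producing $v$.

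Splitting $L(t)=A+B(t)$, with $A$ as in \eqref{eq14} and $B(t):=L(t)-A$, I would read off \eqref{eq13} to see that the principal coefficients of $B$ are $\widehat{\kappa}\mathfrak{g}^{\alpha\beta}$ shifted by $(\lambda_1,\lambda_2)$ on the diagonal, hence bounded pointwise by $\mathcal{M}_1$ once $\mathfrak{g}^{\alpha\beta}$ is expressed in terms of $\mathfrak{g}_{\alpha\beta}/\mathcal{G}$; the first-order and zero-order coefficients are controlled by $\mathcal{M}_2,\dots,\mathcal{M}_5$. Combined with the elliptic estimate \eqref{eq15}, this yields
\begin{equation*}
\| B(t) f \|_{L^2(U)} \leq C_\sharp \mathcal{M}_1 \| A f \|_{L^2(U)} + C_0\bigl( \| \nabla_X f \|_{L^2(U)} + \| f \|_{L^2(U)} \bigr),
\end{equation*}
with $C_0$ depending only on $\mathcal{M}_2,\dots,\mathcal{M}_5$. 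Applying maximal $L^2$-regularity \eqref{eq16} to the difference equation $\frac{d}{dt}(v_{m+1}-v_m)+A(v_{m+1}-v_m)=-B(v_m-v_{m-1})$ with zero initial data, and bounding the $e^{-t}$ sup-part of $Z_{T_\star}$ by Duhamel for the semigroup generated by $A$, I would obtain
\begin{equation*}
\| v_{m+1}-v_m \|_{Z_{T_\star}} \leq (C_A+1)\| B(v_m-v_{m-1}) \|_{L^2(0,T_\star;L^2(U))}.
\end{equation*}
Substituting the $B$-bound, the principal-part contribution is $\leq C_\sharp\mathcal{M}_1(C_A+1)\|v_m-v_{m-1}\|_{Z_{T_\star}}\leq \tfrac{1}{8\sqrt{2}}\|v_m-v_{m-1}\|_{Z_{T_\star}}$ by hypothesis, while the lower-order contribution carries a prefactor $\phi(T_\star)(C_A+1)$ with $\phi(T_\star)\to 0$ as $T_\star\to 0$. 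The explicit choice of $T_\star$ in the statement is engineered so that $\phi(T_\star)(C_A+1)$ is bounded by a fixed number strictly less than $1-\tfrac{1}{8\sqrt{2}}$; consequently the total iteration constant is $<1$, $(v_m)$ is Cauchy in $Z_{T_\star}$, and the limit is a strong solution of \eqref{eq12}. Uniqueness follows from the same contraction estimate applied to the difference of two strong solutions with common initial data, combined with the third (initial-value) condition of Definition \ref{def22}.

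The main obstacle I anticipate is the lower-order estimate: to convert the $(\mathcal{M}_2,\dots,\mathcal{M}_5)$-terms in $B$ into a prefactor $\phi(T_\star)$ that vanishes with $T_\star$, one must interpolate between $\|Af\|_{L^2(U)}$ and the exponentially weighted sup-norm inside $Z_{T_\star}$, and then integrate in time over $(0,T_\star)$. This bookkeeping is precisely what forces $T_\star$ to take the specific logarithmic form involving $C_\star$ and $C_A+1$. The principal-part contraction, by contrast, is independent of $T_\star$ and is built directly into the hypothesis $C_\sharp\mathcal{M}_1(C_A+1)\leq 1/(8\sqrt{2})$; this structural split between a dimensionless smallness condition governing the top-order part and a short-time condition absorbing the lower-order part is the entire motivation for the decomposition $L=A+B$.
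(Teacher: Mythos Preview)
Your proposal is correct and follows the paper's route: reduce to \eqref{eq12} via the parametrization, split $L=A+B$, iterate using the maximal $L^2$-regularity \eqref{eq16} together with the contraction-semigroup bound on $e^{-tA}$, and contract in $Z_{T_\star}$ with the principal part controlled by the hypothesis and the lower order absorbed by short time. The only refinement worth noting is that the paper performs the interpolation of the first-order terms of $B$ \emph{before} iterating (Lemma~\ref{lem71}(i)), arriving at the clean form $\|B(t)f\|_{L^2}\le 2C_\sharp\mathcal{M}_1\|Af\|_{L^2}+C_\star\|f\|_{L^2}$---note the factor $2$, arising from the cross term $2\widehat{\kappa}\mathfrak{g}^{12}\partial_{X_1}\partial_{X_2}$ together with the interpolation contribution, which is precisely why the hypothesis reads $\tfrac{1}{8\sqrt{2}}$ rather than $\tfrac{1}{4\sqrt{2}}$---so that the small-time factor in Lemma~\ref{lem66} is exactly $C_\star\sqrt{e^{2T_\star}-1}$, giving the stated logarithmic formula for $T_\star$.
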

\noindent See Section \ref{sect4} for function spaces on the evolving surface $\Gamma (t)$.
\begin{theorem}[Local existence $(\mathrm{II})$]\label{thm25}
Assume that $T < \infty$ and that
\begin{equation*}
 C_\sharp ( \mathcal{M}_1 + \mathcal{M}_2 + \mathcal{M}_3 + \mathcal{M}_4 ) ( C_A + 1 )  \leq \frac{1}{8 \sqrt{2}}.
\end{equation*}
Then for each $u_0 \in W_0^{1,2} ( \Gamma_0 )$, there exists a unique strong solution $u$ in
\begin{equation*}
L^2 (0,T_*; W^{1,2}_0 \cap W^{2,2} ( \Gamma ( \cdot ) )) \cap W^{1,2} (0,T_*;L^2 (\Gamma ( \cdot )))
\end{equation*}
of system \eqref{eq11} with initial datum $u_0$, where
\begin{equation*}
T_* = \min \left\{ T, \frac{1}{2} \log \left( 1 + \frac{1}{16  C_\sharp^2 \mathcal{M}_5^2 (C_A + 1)^2} \right) \right\}. 
\end{equation*}
Here $C_\sharp$, $C_A$ are the two positive constants in \eqref{eq15} and \eqref{eq16}, respectively.
\end{theorem}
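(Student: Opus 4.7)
The plan is to pull the problem back to the reference domain $U$ via the parametrization $\widehat x$, which reduces \eqref{eq11} to the evolution equation \eqref{eq12} governed by the time-dependent operator $L(t)$ of \eqref{eq13}. Write $L = A + B$, with $A$ the constant-coefficient operator from \eqref{eq14} and $B := L - A$. Expanding the divergence in \eqref{eq13} and matching against $Af = -\lambda_1 \partial_{X_1}^2 f - \lambda_2 \partial_{X_2}^2 f$, the perturbation $B$ decomposes naturally into a second-order piece whose $L^\infty$ coefficient norm is precisely $\mathcal{M}_1$, three first-order pieces (coming from $\partial_\alpha(\sqrt{\mathcal G}\,\widehat\kappa\,\mathfrak g^{\alpha\beta})/\sqrt{\mathcal G}$) with $L^\infty$ coefficient norms $\mathcal{M}_2, \mathcal{M}_3, \mathcal{M}_4$, and the zeroth-order multiplier $(2\mathcal G)^{-1} d\mathcal G/dt$ with norm $\mathcal{M}_5$.

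Next I would set up the Picard iteration sketched in the introduction: define $v_1$ by $\partial_t v_1 + A v_1 = 0$, $v_1|_{t=0}=v_0$, and inductively $\partial_t v_{m+1} + A v_{m+1} = -B v_m$, $v_{m+1}|_{t=0}=v_0$. Each iterate lies in $Z_T$ by the maximal $L^2$-regularity \eqref{eq16}; a Duhamel estimate of the form $\sup_t e^{-t}\|V(t)\|_{L^2}\le \|V_0\|_{L^2}+\|dV/dt\|_{L^2(0,T;L^2(U))}$ controls the weighted sup part of $\|\cdot\|_{Z_T}$. Applied to the difference $v_{m+1}-v_m$, which carries zero initial data, this yields the working estimate
\begin{equation*}
\|v_{m+1}-v_m\|_{Z_{T_*}} \le (C_A + 1)\,\|B(v_m - v_{m-1})\|_{L^2(0,T_*;L^2(U))}.
\end{equation*}

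The heart of the proof is to bound $\|Bw\|_{L^2(0,T_*;L^2(U))}$ by a strictly sub-unit multiple of $\|w\|_{Z_{T_*}}$ for $w = v_m - v_{m-1}$. For the second- and first-order pieces the elliptic regularity \eqref{eq15} gives $\|\nabla_X^k w(t)\|_{L^2} \le C_\sharp \|Aw(t)\|_{L^2}$ for $k = 1, 2$, which combined with the $\|Aw\|_{L^2(0,T_*;L^2(U))}$ part of the $Z_{T_*}$ norm yields a contribution of size at most $C_\sharp(\mathcal M_1 + \mathcal M_2 + \mathcal M_3 + \mathcal M_4)\|w\|_{Z_{T_*}}$; the smallness hypothesis is exactly what is needed so that the $(C_A+1)$-multiple of this is bounded by $1/(8\sqrt 2)$. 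For the zeroth-order piece, instead of using \eqref{eq15} once more (which would force $\mathcal M_5$ into the smallness condition), I would exploit the weighted sup part of the $Z_{T_*}$ norm through $\|w(t)\|_{L^2} \le e^t \|w\|_{Z_{T_*}}$, giving
\begin{equation*}
\|\mathcal M_5 w\|_{L^2(0,T_*;L^2(U))} \le \mathcal M_5\sqrt{(e^{2T_*}-1)/2}\;\|w\|_{Z_{T_*}},
\end{equation*}
so that the definition of $T_*$ is exactly what forces the $(C_A+1)$-multiple of the right-hand side to be sub-unit as well. Summing the two contributions makes the Picard map a contraction on $Z_{T_*}$; its unique fixed point $v$ is the strong solution of \eqref{eq12}, and $u(x,t) := v(\widetilde X(x,t), t)$ is the unique strong solution of \eqref{eq11} in the sense of Definition \ref{def22}, using the equivalence of the two systems noted after \eqref{eq13}.

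The main obstacle is the careful bookkeeping in the third step: $B$ must be split so that the $L^\infty$ norms of its pieces match exactly the quantities $\mathcal M_1,\ldots,\mathcal M_5$ introduced just before the theorem, and the elliptic-regularity estimate \eqref{eq15} must be balanced against the weighted sup norm in $Z_T$ in such a way that $\mathcal M_5$ alone governs the choice of $T_*$, while $\mathcal M_2,\mathcal M_3,\mathcal M_4$ are consigned to the smallness hypothesis. This trade-off is precisely what distinguishes Theorem \ref{thm25} from Theorem \ref{thm24}, where all lower-order coefficients are absorbed into the single constant $C_\star$ appearing in $T_\star$. Uniqueness on $[0,T_*]$ then follows at once by applying the same contraction estimate to the difference of two hypothetical strong solutions.
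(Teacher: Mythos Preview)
Your proposal is correct and follows essentially the same route as the paper: the paper reduces to system \eqref{eq12} on $U$, invokes the splitting estimate \eqref{eq73} of Lemma \ref{lem71} (which is exactly your decomposition of $B$ into pieces governed by $\mathcal{M}_1,\ldots,\mathcal{M}_4$ against $\|Af\|$ and by $\mathcal{M}_5$ against $\|f\|$), and then feeds this into the abstract contraction result Theorem \ref{thm63}(i) with $C_1 = 2C_\sharp(\mathcal{M}_1+\cdots+\mathcal{M}_4)$ and $C_2 = \mathcal{M}_5$, before transferring back via Lemmas \ref{lem51}--\ref{lem53}. Your identification of the trade-off that distinguishes this result from Theorem \ref{thm24} (first-order coefficients pushed into the smallness hypothesis rather than into the time threshold) is exactly the point.
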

\begin{theorem}[Global existence and stability]\label{thm26}
Assume that $T = \infty$ and that
\begin{equation*}
C_\sharp ( \mathcal{M}_1 + \mathcal{M}_2 + \mathcal{M}_3 + \mathcal{M}_4 + \mathcal{M}_5 )  ( C_A + 1 ) \leq \frac{1}{8 \sqrt{2}}.
\end{equation*}
Then for each $u_0 \in W_0^{1,2} ( \Gamma_0 )$, there exists a unique strong solution $u$ in
\begin{equation*}
L^2 (0, \infty; W^{1,2}_0 \cap W^{2,2} ( \Gamma ( \cdot ) )) \cap W^{1,2} (0,\infty ;L^2 (\Gamma ( \cdot )))
\end{equation*}
of system \eqref{eq11} with initial datum $u_0$. Moreover, the solution $u$ satisfies the following three properties:\\
$(\mathrm{i})$ (\text{Energy equality}) For all $0 \leq s \leq t$,
\begin{equation*}
\frac{1}{2} \| u(t) \|_{L^2 (\Gamma (t))}^2 + \int_s^t \| \sqrt{\kappa} \nabla_\Gamma u (\tau ) \|^2_{L^2(\Gamma (\tau))} { \ } d \tau = \frac{1}{2} \| u(s) \|_{L^2 (\Gamma (s))}^2.
\end{equation*}
$(\mathrm{ii})$ (Stability) There is $C>0$ such that for all $t >0$
\begin{equation*}
\| u ( t) \|_{L^2 ( \Gamma (t))} \leq C t^{-\frac{1}{2}} \| \widehat{u}_0 \|_{W^{1,2} (U)}.
\end{equation*}
That is,
\begin{equation*}
\lim_{t \to \infty } \| u (t) \|_{L^2 ( \Gamma (t))} = 0.
\end{equation*}
$(\mathrm{iii})$ (Regularity) There is $C>0$ independent of $u_0$ such that
\begin{equation*}
\| D_t^w u \|_{L^2(0,T;L^2(\Gamma (\cdot )))} + \| {\rm{div}}_\Gamma (\kappa \nabla_\Gamma u ) \|_{L^2(0,T;L^2(\Gamma (\cdot )))} \leq C \| \widehat{u}_0 \|_{W^{1,2}(U)}.
\end{equation*}
Here $\widehat{u}_0 = \widehat{u}_0 (X) = u_0 (\widehat{x}(X,0))$ and $C_\sharp$, $C_A$ are the two positive constants in \eqref{eq15} and \eqref{eq16}.
\end{theorem}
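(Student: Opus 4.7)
The overall strategy is to reduce system \eqref{eq11} to the evolution equation \eqref{eq12} on the fixed reference domain $U$ via the parametrization $\widehat{x}$, and then solve \eqref{eq12} on $[0,\infty)$ by the operator-splitting iteration outlined in the introduction. Setting $v = u\circ\widehat{x}$ and $v_0 = u_0\circ\widehat{x}(\cdot,0)$, Assumption \ref{ass23} and Definition \ref{def22} show that a function $v\in L^2(0,\infty;W^{1,2}_0\cap W^{2,2}(U))\cap W^{1,2}(0,\infty;L^2(U))$ solving \eqref{eq12} transports back to a strong solution $u$ of \eqref{eq11}; note $v_0\in W^{1,2}_0(U) = D(A^{1/2})$, so the initial data is compatible with the maximal-regularity estimate \eqref{eq16}.

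The key preparatory step is to quantify $B:=L-A$ as a lower-order perturbation of $A$. Using $(\mathfrak{g}^{\alpha\beta}) = (\mathfrak{g}_{\alpha\beta})^{-1}$, one identifies: (a) a principal part $\bigl(\lambda_j\delta_{\alpha\beta}-\widehat{\kappa}\mathfrak{g}^{\alpha\beta}\bigr)\partial_\alpha\partial_\beta f$, whose coefficients have $L^\infty$-norm controlled by $\mathcal{M}_1$; (b) first-order terms produced when the divergence in \eqref{eq13} is expanded, whose coefficients are precisely what the quantities $\mathcal{M}_2$, $\mathcal{M}_3$, $\mathcal{M}_4$ measure (after grouping the derivatives of $\mathfrak{g}_{\alpha\beta}$, $\widehat{\kappa}$, and $\mathcal{G}$); and (c) the zeroth-order term $\frac{1}{2\mathcal{G}}\tfrac{d\mathcal{G}}{dt}\,f$ bounded by $\mathcal{M}_5\|f\|_{L^2(U)}$. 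Combining these with the elliptic regularity \eqref{eq15} gives the pointwise-in-$t$ operator bound
\begin{equation*}
\|B(t)f\|_{L^2(U)}\;\le\; C_\sharp(\mathcal{M}_1+\mathcal{M}_2+\mathcal{M}_3+\mathcal{M}_4+\mathcal{M}_5)\,\|Af\|_{L^2(U)}\qquad (f\in D(A)),
\end{equation*}
and hence the integrated bound $\|Bv\|_{L^2(0,\infty;L^2)}\le C_\sharp(\sum\mathcal{M}_j)\|Av\|_{L^2(0,\infty;L^2)}$.

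Next I would run the Picard-type scheme from the introduction on $[0,\infty)$: set $v_1$ as the solution of $\partial_tv_1+Av_1=0$, $v_1|_{t=0}=v_0$, and iterate $\partial_tv_{m+1}+Av_{m+1}=-Bv_m$, $v_{m+1}|_{t=0}=v_0$. Each $v_m\in Z_\infty$ by \eqref{eq16}. For $w_m:=v_{m+1}-v_m$ (zero initial data), \eqref{eq16} combined with the $B$-bound yields $\|w_m'\|_{L^2_tL^2}+\|Aw_m\|_{L^2_tL^2}\le C_A\,C_\sharp(\sum\mathcal{M}_j)\,\|Aw_{m-1}\|_{L^2_tL^2}$, and the sup-with-weight piece $\sup_t e^{-t}\|w_m\|_{L^2}$ is controlled via $w_m(t)=\int_0^tw_m'(s)\,ds$ and Cauchy--Schwarz (the factor $e^{-t}\sqrt{t}$ being bounded in $t\ge0$). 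Together these give the contraction $\|w_m\|_{Z_\infty}\le \tfrac12\|w_{m-1}\|_{Z_\infty}$ under the hypothesis $C_\sharp(\sum\mathcal{M}_j)(C_A+1)\le 1/(8\sqrt 2)$. The limit $v\in Z_\infty$ is the desired global strong solution, and the same contraction applied to the difference of two solutions yields uniqueness. Pulling back through $\widehat{x}$ and invoking the function-space results of Section \ref{sect4} produces $u$ in the stated class, and item (iii) is immediate from the $Z_\infty$-bound on $v$.

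For (i), I would test \eqref{eq11} against $u$ in $L^2(\Gamma(t))$, apply the evolving-surface Reynolds transport formula together with the identity $D_t^w u+(\mathrm{div}_\Gamma w)u = \partial_t^\bullet u$ in divergence form, and integrate by parts using $u|_{\partial\Gamma(t)}=0$; the resulting cancellation produces the stated energy equality. For (ii), (i) gives that $t\mapsto\|u(t)\|_{L^2(\Gamma(t))}$ is non-increasing and $\int_0^\infty\kappa_{\min}\|\nabla_\Gamma u\|^2\,dt\le \tfrac12\|u_0\|_{L^2(\Gamma_0)}^2$; combining with a uniform Poincaré inequality on $\Gamma(t)$ (available from Assumption \ref{ass23} and Definition \ref{def21}) and the monotonicity bound $t\,\|u(t)\|^2\le\int_0^t\|u(s)\|^2\,ds$ yields $\|u(t)\|_{L^2(\Gamma(t))}\le Ct^{-1/2}\|u_0\|_{L^2(\Gamma_0)}\le Ct^{-1/2}\|\widehat u_0\|_{W^{1,2}(U)}$. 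The hardest step, as in Theorems \ref{thm24} and \ref{thm25}, is the bookkeeping of the $B$-estimate: one must extract the coefficient groupings that give exactly $\mathcal{M}_1,\ldots,\mathcal{M}_5$ (not a larger quantity), and one must reconcile the exponential-weighted $\sup$-piece of the $Z_\infty$-norm with the unweighted $L^2$-in-time norms used by the maximal-regularity estimate so that the contraction closes globally in time under the single smallness assumption.
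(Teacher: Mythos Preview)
Your proposal is correct and follows the paper's overall architecture: reduce to \eqref{eq12} on $U$, establish the $B$-bound of Lemma~\ref{lem71} (note the paper records the constant as $2C_\sharp$ because of the cross term $2\widehat\kappa\,\mathfrak g^{12}\partial_{X_1}\partial_{X_2}$, which is why the theorem's hypothesis has $1/(8\sqrt2)$ rather than $1/(4\sqrt2)$), run the Picard scheme $\partial_t v_{m+1}+Av_{m+1}=-Bv_m$ with the maximal $L^2$-regularity of $A$, and close a contraction in $Z_\infty$ (the paper's $Y_\infty$).

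There are two tactical differences worth flagging. First, for the weighted $\sup_t e^{-t}\|\cdot\|_{L^2}$ piece of $Z_\infty$, the paper controls $v_{m+1}$ through the Duhamel formula $v_{m+1}(t)=e^{-tA}v_0-\int_0^t e^{-(t-\tau)A}Bv_m\,d\tau$ together with the contraction property of $e^{-tA}$; to justify this representation pointwise it first checks a local H\"older-in-$t$ condition on $B$ (Lemma~\ref{lem71}(iv), feeding into Lemma~\ref{lem65}). Your route via $w_m(t)=\int_0^t w_m'$ and Cauchy--Schwarz is cleaner and sidesteps that regularity bookkeeping entirely. Second, for the decay in (ii) the paper does \emph{not} invoke a Poincar\'e inequality on $\Gamma(t)$: instead it bounds $\int_0^\infty\|u\|_{L^2(\Gamma)}^2\,dt\le C\int_0^\infty\|v\|_{L^2(U)}^2\,dt\le CC_\sharp^2\int_0^\infty\|Av\|_{L^2(U)}^2\,dt$ via the elliptic estimate \eqref{eq15} on the fixed domain, and then uses the $Z_\infty$-bound (this is Lemma~\ref{lem53}(iii) with \eqref{eq52}). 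Your surface-Poincar\'e argument also works, but you would need to record that the Poincar\'e constant on $\Gamma(t)$ is uniform in $t$, which follows from Definition~\ref{def21} by pulling back to $U$ but is not stated in the paper.
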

\noindent Remark that we prove Theorems \ref{thm24}-\ref{thm26} in Sections \ref{sect5}-\ref{sect7}.

\section{Preliminaries}\label{sect3}
In this section, we first recall some basic properties of several differential operators on the evolving surface $\Gamma (t)$. Then we state the fundamental properties of an elliptic operator.

\subsection{Differential Operators on Evolving Surface $\Gamma (t)$}\label{subsec31}
We first introduce the representation of some differential operators. Then we state the surface divergence theorem. For $\psi = \psi (x)$, $\Psi = \Psi (x,t)$,
\begin{equation*}
\widehat{\psi} = \widehat{\psi} (X, t) := \psi (\widehat{x} (X,t)) \text{ and }\widehat{\Psi} = \widehat{\Psi} (X, t) := \Psi (\widehat{x} (X,t) , t).
\end{equation*}
From \cite[Chapter 3]{Jos11}, \cite[Appendix]{DE07}, \cite[Section 3]{K18}, and \cite[Section 3]{K19a}, we obtain the following lemma.
\begin{lemma}[Representation formula for differential operators]\label{lem31}{ \ }\\
$(\mathrm{i})$ For each $\psi \in C ( \mathbb{R}^3)$,
\begin{equation*}
\int_{\Gamma (t)} \psi (x) { \ } d \mathcal{H}^2_x = \int_U \widehat{\psi} (X,t) \sqrt{ \mathcal{G} (X,t) }{ \ }d X.
\end{equation*}
$(\mathrm{ii})$ For each $j=1,2,3$, and $\psi \in C^1 ( \mathbb{R}^3)$,
\begin{equation*}
\int_{\Gamma (t)} \partial^\Gamma_j \psi { \ } d \mathcal{H}^2_x = \int_U \mathfrak{g}^{\alpha \beta} \frac{\partial \widehat{x}_j }{\partial X_\alpha} \frac{\partial \widehat{\psi} }{\partial X_\beta} \sqrt{ \mathcal{G} }{ \ }d X.
\end{equation*}
$(\mathrm{iii})$ For each $i,j=1,2,3$, and $\psi \in C^2 ( \mathbb{R}^3)$,
\begin{equation*}
\int_{\Gamma (t)} \partial_i^\Gamma \partial_j^\Gamma \psi { \ }d \mathcal{H}^2_x =  \int_U \mathfrak{g}^{\zeta \eta} \frac{\partial \widehat{x}_i}{\partial X_\zeta} \frac{\partial}{\partial X_\eta} \left( \mathfrak{g}^{\alpha \beta} \frac{\partial \widehat{x}_j}{\partial X_\alpha} \frac{\partial \widehat{\psi}}{\partial X_\beta} \right) \sqrt{ \mathcal{G} } { \ }d X.
\end{equation*}
$(\mathrm{iv})$ For each $\psi \in C^2 ( \mathbb{R}^3)$,
\begin{equation*}
\int_{\Gamma (t)} {\rm{div}}_\Gamma ( \kappa \nabla_\Gamma \psi ) { \ } d \mathcal{H}^2_x = \int_U \left\{ \frac{1}{ \sqrt{ \mathcal{G}} } \frac{\partial}{\partial X_\alpha} \left( \widehat{\kappa} \sqrt{ \mathcal{G}} \mathfrak{g}^{ \alpha \beta} \frac{\partial \widehat{\psi}}{ \partial X_\beta} \right) \right\} \sqrt{ \mathcal{G} }{ \ }d X.
\end{equation*}
$(\mathrm{v})$
\begin{equation*}
\int_{\Gamma (t)} {\rm{div}}_\Gamma w { \ } d \mathcal{H}^2_x = \int_U \frac{1}{ 2 \mathcal{G} } \left( \frac{d \mathcal{G}}{d t} \right) \sqrt{ \mathcal{G} }{ \ }d X \left( = \frac{d}{d t} \int_U \sqrt{ \mathcal{G} }{ \ }d X \right).
\end{equation*}
$(\mathrm{vi})$ For every $\Psi  \in C^1 (\mathbb{R}^4)$,
\begin{equation*}
\int_{\Gamma (t)} D_t^w \Psi { \ } d \mathcal{H}^2_x = \int_U \left( \frac{d}{d t} \widehat{\Psi} \right) \sqrt{ \mathcal{G} }{ \ }d X.
\end{equation*}
\end{lemma}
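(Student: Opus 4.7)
The plan is to reduce every identity to an integral over the parametric domain $U$ via the map $\widehat{x}(\cdot,t)$, which by Definition \ref{def21}(iv) is a $C^3$ immersion with $\mathcal{G}\geq\lambda_{min}>0$. The backbone is the area formula $d\mathcal{H}^2_x=\sqrt{\mathcal{G}(X,t)}\,dX$, which is the standard surface measure coming from the first fundamental form $\mathfrak{g}_{\alpha\beta}=\mathfrak{g}_\alpha\cdot\mathfrak{g}_\beta$; this immediately gives (i). For (ii) the key computation is to express the tangential derivative pointwise on $\Gamma(t)$ in coordinates: by the chain rule $\partial \widehat{\psi}/\partial X_\alpha = (\nabla\psi)(\widehat{x})\cdot\mathfrak{g}_\alpha$, and since $\nabla_\Gamma\psi=\nabla\psi-(n\cdot\nabla\psi)n$ is tangent to $\Gamma(t)$ while $\{\mathfrak{g}^1,\mathfrak{g}^2\}$ with $\mathfrak{g}^\alpha=\mathfrak{g}^{\alpha\beta}\mathfrak{g}_\beta$ is the dual tangent frame, one obtains
\begin{equation*}
(\partial_j^\Gamma\psi)(\widehat{x}(X,t))=\mathfrak{g}^{\alpha\beta}\frac{\partial\widehat{x}_j}{\partial X_\alpha}\frac{\partial\widehat{\psi}}{\partial X_\beta}.
\end{equation*}
Substituting this into (i) yields (ii).

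For (iii) I would apply (ii) once, then apply the identity again to the function $\partial_j^\Gamma\psi$, whose parametric representation is already given by the formula above; this gives the nested expression in (iii). Formula (iv) is a slight repackaging of (iii): write ${\rm div}_\Gamma(\kappa\nabla_\Gamma\psi)=\sum_j\partial_j^\Gamma(\kappa\,\partial_j^\Gamma\psi)$ and sum the $j$-index using the two standard contractions $\mathfrak{g}^{\zeta\eta}\mathfrak{g}_\zeta\cdot\mathfrak{g}_\alpha=\delta^\eta_\alpha$ and $\sum_j(\partial\widehat{x}_j/\partial X_\zeta)(\partial\widehat{x}_j/\partial X_\alpha)=\mathfrak{g}_{\zeta\alpha}$, which collapses the double sum into the Laplace--Beltrami form $\mathcal{G}^{-1/2}\partial_\alpha(\widehat{\kappa}\sqrt{\mathcal{G}}\mathfrak{g}^{\alpha\beta}\partial_\beta\widehat{\psi})$. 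The remaining index juggling is routine but must be done carefully.

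Part (vi) is the quickest: setting $\widehat{\Psi}(X,t)=\Psi(\widehat{x}(X,t),t)$ and using $w(\widehat{x}(X,t),t)=\widehat{x}_t(X,t)$, the chain rule gives
\begin{equation*}
\frac{d\widehat{\Psi}}{dt}=(\partial_t\Psi)(\widehat{x})+\widehat{x}_t\cdot(\nabla\Psi)(\widehat{x})=(D_t^w\Psi)(\widehat{x}),
\end{equation*}
and integrating against $\sqrt{\mathcal{G}}\,dX$ and invoking (i) finishes. The delicate identity, and the one I expect to be the main obstacle, is (v), which is the surface Reynolds transport theorem. My plan is to first prove the parenthetical equality by differentiation under the integral, namely $\frac{d}{dt}\sqrt{\mathcal{G}}=\frac{1}{2\sqrt{\mathcal{G}}}\frac{d\mathcal{G}}{dt}$, and then to show the pointwise identity
\begin{equation*}
({\rm div}_\Gamma w)(\widehat{x}(X,t))=\frac{1}{2\mathcal{G}}\frac{d\mathcal{G}}{dt}.
\end{equation*}
For this I would apply (ii) componentwise to $w$, use $(\nabla\psi)(\widehat{x})\cdot\mathfrak{g}_\alpha=\partial\widehat{\psi}/\partial X_\alpha$ with $\psi=w_j$ so that the parametric derivative of $\widehat{w}_j=(\widehat{x}_j)_t$ becomes $\partial_t\partial_{X_\alpha}\widehat{x}_j$, and then invoke Jacobi's formula $d\mathcal{G}/dt=\mathcal{G}\mathfrak{g}^{\alpha\beta}(d\mathfrak{g}_{\alpha\beta}/dt)$ together with $d\mathfrak{g}_{\alpha\beta}/dt=\partial_{X_\alpha}\widehat{x}_t\cdot\mathfrak{g}_\beta+\mathfrak{g}_\alpha\cdot\partial_{X_\beta}\widehat{x}_t$; the symmetric contraction against $\mathfrak{g}^{\alpha\beta}$ matches the expression obtained for ${\rm div}_\Gamma w$ in coordinates, yielding the desired equality. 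Integrating and applying (i) then gives (v).
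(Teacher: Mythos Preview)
Your sketch is correct and follows the standard differential-geometric route: each identity is obtained pointwise in the parametrisation $\widehat{x}(\cdot,t)$ and then integrated against the area element $\sqrt{\mathcal{G}}\,dX$. The key pointwise formula $(\partial_j^\Gamma\psi)\circ\widehat{x}=\mathfrak{g}^{\alpha\beta}(\partial_{X_\alpha}\widehat{x}_j)(\partial_{X_\beta}\widehat{\psi})$, the chain-rule identity for $D_t^w$, and the Jacobi-formula computation for $({\rm div}_\Gamma w)\circ\widehat{x}=\tfrac{1}{2\mathcal{G}}\,d\mathcal{G}/dt$ are all correctly identified and justified.

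The paper, however, does not give its own proof of this lemma: it simply states the result and refers to \cite[Chapter~3]{Jos11}, \cite[Appendix]{DE07}, \cite[Section~3]{K18}, and \cite[Section~3]{K19a}. So there is no in-paper argument to compare against; your proposal is essentially what one would find in those references, and it stands on its own.
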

\noindent Remark that one can derive system \eqref{eq12} from system \eqref{eq11} by applying Lemma \ref{lem31}. From \cite[Chapter 2]{Sim83} and \cite[Section 3]{K19b}, we obtain the following surface divergence theorem.
\begin{lemma}[Surface divergence theorem]\label{lem32}For every $\phi = { }^t ( \phi_1 , \phi_2 , \phi_3 ) \in [C^1 ( \mathbb{R}^3 )]^3$,
\begin{equation*}
\int_{\Gamma (t)} {\rm{div}}_\Gamma \phi { \ }d \mathcal{H}^2_x = - \int_{\Gamma (t)} H_\Gamma (n \cdot \phi) { \ } d \mathcal{H}^2_x + \int_{\partial \Gamma (t)} \nu \cdot \phi { \ } d \mathcal{H}^1_x,
\end{equation*}
where $H_\Gamma = H_\Gamma (x , t )$ denotes the mean curvature in the direction $n$ defined by $H_\Gamma = - {\rm{div}}_\Gamma n$. Here $n$ is the unit outer normal vector to $\overline{\Gamma (t)}$ and $\nu$ is the unit outer co-normal vector to $\partial \Gamma (t)$. 
\end{lemma}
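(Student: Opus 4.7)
The plan is to transfer both sides of the claim to the parameter domain $U\subset\mathbb{R}^2$ via the parametrization $\widehat{x}$, apply the ordinary $\mathbb{R}^2$-divergence theorem on $U$, and then reinterpret the resulting bulk and boundary terms on $\Gamma(t)$ and $\partial\Gamma(t)$ with the help of Lemma~\ref{lem31}. I would write ${\rm{div}}_\Gamma\phi = \partial_j^\Gamma\phi_j$ and use Lemma~\ref{lem31}(ii) componentwise to obtain
\[
\int_{\Gamma(t)} {\rm{div}}_\Gamma\phi\, d\mathcal{H}^2_x = \int_U \mathfrak{g}^{\alpha\beta}\frac{\partial\widehat{x}_j}{\partial X_\alpha}\frac{\partial\widehat{\phi}_j}{\partial X_\beta}\sqrt{\mathcal{G}}\, dX.
\]
An integration by parts in $X_\beta$ on the $C^3$-domain $U$ splits the right-hand side into a bulk contribution over $U$ and a boundary contribution over $\partial U$. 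After factoring $\sqrt{\mathcal{G}}$ out, the bulk integrand is
\[
-\widehat{\phi}_j\cdot\frac{1}{\sqrt{\mathcal{G}}}\frac{\partial}{\partial X_\beta}\!\left(\sqrt{\mathcal{G}}\,\mathfrak{g}^{\alpha\beta}\frac{\partial\widehat{x}_j}{\partial X_\alpha}\right)\sqrt{\mathcal{G}},
\]
which by Lemma~\ref{lem31}(iv) (with $\kappa\equiv 1$ and $\psi=x_j$) is precisely $-\widehat{\phi}_j\,\widehat{\Delta_\Gamma x_j}\sqrt{\mathcal{G}}$.

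Next I would verify the geometric identity $\Delta_\Gamma x_j = H_\Gamma n_j$. Since $\partial_i^\Gamma x_j=\delta_{ij}-n_in_j$, one has ${\rm{div}}_\Gamma(\nabla_\Gamma x_j)=-\sum_i\partial_i^\Gamma(n_in_j)$; expanding with the product rule and using the algebraic identity $\sum_i n_i\,\partial_i^\Gamma f=0$ (valid for every smooth $f$ because the tangential gradient is orthogonal to $n$) yields $\Delta_\Gamma x_j = -n_j\,{\rm{div}}_\Gamma n = H_\Gamma n_j$. Pulling back to $\Gamma(t)$ via Lemma~\ref{lem31}(i), the bulk contribution becomes $-\int_{\Gamma(t)} H_\Gamma(n\cdot\phi)\,d\mathcal{H}^2_x$, which is the first term on the right-hand side of the claim.

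For the boundary term, I would identify
\[
\int_{\partial U}\mathfrak{g}^{\alpha\beta}\frac{\partial\widehat{x}_j}{\partial X_\alpha}\widehat{\phi}_j\,n^U_\beta\sqrt{\mathcal{G}}\,d\mathcal{H}^1_X = \int_{\partial\Gamma(t)}\nu\cdot\phi\,d\mathcal{H}^1_x.
\]
Expanding $\nu=\nu^\alpha\mathfrak{g}_\alpha$, the two conditions that $\nu$ is perpendicular to the tangent of $\partial\Gamma(t)$ and has unit length force $\nu^\alpha=\mathfrak{g}^{\alpha\beta}n^U_\beta/\sqrt{\mathfrak{g}^{\zeta\eta}n^U_\zeta n^U_\eta}$. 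The length-element relation $d\mathcal{H}^1_x=\sqrt{\mathcal{G}}\sqrt{\mathfrak{g}^{\alpha\beta}n^U_\alpha n^U_\beta}\,d\mathcal{H}^1_X$, which follows from the $2\times 2$ linear-algebra identity $(\det A)(A^{-1}n\cdot n)=A\tau\cdot\tau$ for any orthonormal pair $n,\tau\in\mathbb{R}^2$ applied to $A=(\mathfrak{g}_{\alpha\beta})$, then cancels the normalization factor and reproduces the claimed boundary integral. The main obstacle is exactly this last bookkeeping step: writing the co-normal in the parametric basis and matching the induced line element on $\partial\Gamma(t)$ with the Euclidean one on $\partial U$. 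The bulk calculation, by contrast, is a direct unwinding of Lemma~\ref{lem31}.
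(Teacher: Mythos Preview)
Your argument is correct. The paper itself does not prove Lemma~\ref{lem32}; it simply records the result with the attribution ``From \cite[Chapter~2]{Sim83} and \cite[Section~3]{K19b}'' and moves on. So there is no paper-proof to compare against in the usual sense.

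What you have supplied is a self-contained proof that exploits the specific setup of this paper: a single global chart $\widehat{x}:\overline{U}\to\overline{\Gamma(t)}$. Pulling back via Lemma~\ref{lem31}(ii), integrating by parts on $U$, and identifying the bulk term through $\Delta_\Gamma x_j = H_\Gamma n_j$ is exactly the right way to do this in local coordinates, and your derivation of that identity is clean. The boundary computation is also correct: your formula $\nu^\alpha = \mathfrak{g}^{\alpha\beta}n^U_\beta/\sqrt{\mathfrak{g}^{\zeta\eta}n^U_\zeta n^U_\eta}$ and the length-element relation via the $2\times 2$ identity $(\det A)(A^{-1}n\cdot n)=A\tau\cdot\tau$ are both valid and cancel as claimed. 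The only thing you leave implicit is the sign (outward versus inward) of $\nu$, but that is a routine check.

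Compared with the cited references---Simon's proof is measure-theoretic and chart-free, working on general rectifiable varifolds---your route is more elementary and tailored to the single-chart hypothesis of Definition~\ref{def21}. That is an advantage here: it keeps everything within the tools the paper already develops, at the cost of not applying to surfaces that require multiple charts.
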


\subsection{Elliptic Operators}\label{subsec32}
Let $1 < p < \infty$. Define
\begin{equation}\label{eq31}
\begin{cases}
A_p f & = - \left( \lambda_1 \frac{\partial^2}{\partial X_1^2} + \lambda_2 \frac{\partial^2 }{\partial X_2^2} \right) f,\\
D (A_p) & = L^p (U) \cap W^{1,p}_0 (U) \cap W^{2,p} (U).
\end{cases}
\end{equation}
We call $A_p$ the \emph{Dirichlet-Laplace operator} if $\lambda_1 = \lambda_2$. In particular, we write $A$ as $A_2$. We easily have the following basic properties of the operator $A_p$.
\begin{lemma}\label{lem33}
$(\mathrm{i})$ (Strictly ellipticity) For every $\xi = { }^t (\xi_1, \xi_2) \in \mathbb{R}^2$,
\begin{equation*}
\min \{ \lambda_1 , \lambda_2 \} | \xi |^2 \leq \lambda_1 \xi_1^2 + \lambda_2 \xi_2^2 \leq \max \{ \lambda_1 , \lambda_2 \} | \xi |^2.
\end{equation*}
$(\mathrm{ii})$ (Fundamental solution)
For $X = { }^t (X_1, X_2) \in \mathbb{R}^2 \setminus \{ { }^t (0,0) \}$,
\begin{equation*}
\mathcal{E} (X) := \frac{1}{2} \log ( \lambda_2 X_1^2 + \lambda_1 X_2^2).
\end{equation*}
Then
\begin{equation*}
- \lambda_1 \frac{\partial^2 \mathcal{E} }{\partial X_1^2} - \lambda_2 \frac{\partial^2 \mathcal{E} }{\partial X_2^2} = 0 \text{ in } \mathbb{R}^2 \setminus \{ { }^t (0,0) \} .
\end{equation*}
$(\mathrm{iii})$ (Formally selfadjoint operator) For all $f_\sharp , f_\flat \in D ( A)$,
\begin{equation*}
\int_U (A f_\sharp ) f_\flat { \ }d X = \int_U f_\sharp (A f_\flat ) { \ } d X.  
\end{equation*}
$(\mathrm{iv})$ (Fractional power of $A$) For all $f \in D (A)$
\begin{equation*}
\int_U (A f ) f { \ }d X = \lambda_1 \| \partial_{X_1} f \|_{L^2(U)}^2 + \lambda_2 \| \partial_{X_2} f \|_{L^2(U)}^2.  
\end{equation*}
$(\mathrm{v})$ (Heat system) Let $\phi = \phi (X,t)$ be a $C^2$-function. Set
\begin{equation*}
\Phi (X , t) = \phi ( X_1/\sqrt{\lambda_1} , X_2/\sqrt{\lambda_2} , t).
\end{equation*}
Assume that $\phi$ satisfies $\partial_t \phi - \Delta_X \phi = 0$. Then $\Phi$ satisfies
\begin{equation*}
\partial_t \Phi - (\lambda_1 \partial_{X_1}^2 + \lambda_2 \partial_{X_2}^2 ) \Phi = 0.
\end{equation*}
Here $\Delta_X := \partial_{X_1}^2 + \partial_{X_2}^2$.
\end{lemma}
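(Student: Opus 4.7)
The plan is to treat each of the five items as a direct computation; none of them requires any deep machinery beyond integration by parts, the chain rule, and the Dirichlet boundary data in $D(A)$. The main ``obstacle'' is really just bookkeeping, since the symmetric weights $\lambda_1, \lambda_2$ swap places in the fundamental solution and the scaling in (v).

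For (i) I would simply observe that $\lambda_1 \xi_1^2 + \lambda_2 \xi_2^2$ is a convex combination-like expression: bounding each coefficient by $\min\{\lambda_1,\lambda_2\}$ from below and $\max\{\lambda_1,\lambda_2\}$ from above and summing gives both inequalities. For (ii) I would set $r = r(X) := \lambda_2 X_1^2 + \lambda_1 X_2^2$ so $\mathcal{E} = \tfrac{1}{2}\log r$, then compute
\begin{equation*}
\partial_{X_1}\mathcal{E} = \frac{\lambda_2 X_1}{r}, \quad \partial_{X_1}^2 \mathcal{E} = \frac{\lambda_1\lambda_2 X_2^2 - \lambda_2^2 X_1^2}{r^2},
\end{equation*}
and analogously $\partial_{X_2}^2\mathcal{E} = (\lambda_1\lambda_2 X_1^2 - \lambda_1^2 X_2^2)/r^2$. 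Multiplying by $-\lambda_1, -\lambda_2$ respectively and adding, the numerator telescopes to zero; this verifies the equation off the origin.

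For (iii) and (iv) I would integrate by parts against a test function in $D(A)$: since $f_\sharp, f_\flat \in W_0^{1,p}(U) \cap W^{2,p}(U)$, Dirichlet traces vanish and there are no boundary contributions. One integration by parts gives
\begin{equation*}
\int_U (Af)f\, dX = \lambda_1 \int_U (\partial_{X_1} f)^2\, dX + \lambda_2 \int_U (\partial_{X_2} f)^2\, dX,
\end{equation*}
which is (iv); a second round (or a symmetric pairing) yields
\begin{equation*}
\int_U (Af_\sharp) f_\flat\, dX = \lambda_1 \int_U \partial_{X_1} f_\sharp \, \partial_{X_1} f_\flat\, dX + \lambda_2 \int_U \partial_{X_2} f_\sharp \, \partial_{X_2} f_\flat\, dX,
\end{equation*}
which is symmetric in $f_\sharp, f_\flat$ and therefore gives (iii). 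A density argument from $C_c^\infty(U)$ into $D(A)$ is not even needed here since we already assume the elements lie in the domain.

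For (v) the computation is pure chain rule. Writing $Y = (X_1/\sqrt{\lambda_1}, X_2/\sqrt{\lambda_2})$, I would note
\begin{equation*}
\partial_{X_\alpha}^2 \Phi(X,t) = \frac{1}{\lambda_\alpha}(\partial_{Y_\alpha}^2 \phi)(Y,t) \qquad (\alpha=1,2),
\end{equation*}
so that
\begin{equation*}
\lambda_1 \partial_{X_1}^2 \Phi + \lambda_2 \partial_{X_2}^2 \Phi = (\partial_{Y_1}^2 \phi + \partial_{Y_2}^2 \phi)(Y,t) = (\Delta_X \phi)(Y,t).
\end{equation*}
Combined with $\partial_t \Phi(X,t) = (\partial_t \phi)(Y,t) = (\Delta_X \phi)(Y,t)$, this yields $\partial_t \Phi - (\lambda_1 \partial_{X_1}^2 + \lambda_2 \partial_{X_2}^2)\Phi = 0$. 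The only point worth being careful about is to keep the $\lambda$'s straight (the $X_1$ scaling uses $\sqrt{\lambda_1}$ and produces the factor $1/\lambda_1$, which then cancels with $\lambda_1$ out front); since nothing else is at stake, I expect the full proof to fit in under a page.
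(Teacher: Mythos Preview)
Your proposal is correct and is precisely the kind of elementary verification the paper has in mind: the paper itself does not supply a proof, introducing the lemma only with the phrase ``We easily have the following basic properties of the operator $A_p$.'' Your direct computations for (i), (ii), (v) and the integration-by-parts argument for (iii), (iv) (using the Dirichlet condition built into $D(A)=L^2(U)\cap W^{1,2}_0(U)\cap W^{2,2}(U)$) fill in exactly those routine details; the only cosmetic slip is writing $W_0^{1,p}$ where you mean $W_0^{1,2}$.
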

From Lemma \ref{lem33}, \cite[Chapter 7]{Paz83}, \cite[Chapters 7-9]{GT98}, and \cite[Chapter 6]{Eva10}, we have the following lemma.
\begin{lemma}\label{lem34}
$(\mathrm{i})$ (Elliptic regularity) There is $C_\sharp = C_\sharp (p, \lambda_1 , \lambda_2 , U )>0$ such that for all $f \in D (A_p)$
\begin{equation}\label{eq32}
\| f \|_{L^p(U)} + \| \nabla_X f \|_{L^p(U)} + \| \nabla_X^2 f \|_{L^p (U)} \leq C_\sharp \| A_p f \|_{L^p(U)}.
\end{equation}
Moreover, if $\lambda_2 = q \lambda_1$ for some $q>0$, then we write
\begin{equation}\label{eq33}
C_\sharp = \frac{C_\sharp' ( q, p ,U )}{\lambda_1}.
\end{equation}
$(\mathrm{ii})$ (Interpolation inequality) For each $\varepsilon >0$ there is $C = C ( \varepsilon , p, \lambda_1 , \lambda_2 , U ) >0$ such that for all $f \in D (A_p)$
\begin{equation}\label{eq34}
\| \nabla_X f \|_{L^p (U)} \leq \varepsilon \| \nabla_X^2 f \|_{L^p(U)} + C \| f \|_{L^p(U)}.
\end{equation}
$(\mathrm{iii})$ The operator $-A_p$ generates a bounded analytic semigroup on $L^p(U)$.\\
$(\mathrm{iv})$ The operator $- A$ generates a contraction $C_0$-semigroup on $L^2(U)$.\\
$(\mathrm{v})$ The operator $A$ is a non-negative selfadjoint operator in $L^2(U)$.\\
$(\mathrm{vi})$ $D (A^{\frac{1}{2}}) = W^{1,2}_0 (U)$ and for all $f \in D (A^{\frac{1}{2}})$
\begin{equation}\label{eq35}
\| A^{\frac{1}{2}} f \|_{L^2 (U)}^2 = \lambda_1 \| \partial_{X_1} f \|_{L^2(U)}^2 + \lambda_2 \| \partial_{X_2} f \|_{L^2(U)}^2.  
\end{equation}
\end{lemma}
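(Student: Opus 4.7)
The plan is to reduce each of the six assertions about $A_p$ to a classical fact for the Dirichlet Laplacian on a bounded $C^3$-domain in $\mathbb{R}^2$, via the anisotropic linear change of coordinates $Y_\alpha = X_\alpha / \sqrt{\lambda_\alpha}$ ($\alpha = 1, 2$). Under this substitution $U$ is mapped diffeomorphically to a bounded $C^3$-domain $\widetilde U \subset \mathbb{R}^2$, the operator $A_p$ is pulled back to $-\Delta_Y := -(\partial_{Y_1}^2 + \partial_{Y_2}^2)$ on $\widetilde U$, and the spaces $L^p$, $W^{1,p}_0$, $W^{2,p}$ on $U$ correspond to their counterparts on $\widetilde U$ up to constant Jacobian factors. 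All six claims will then follow by invoking the appropriate result for $-\Delta_Y$ and transforming back, with careful tracking of the $\lambda_1, \lambda_2$ dependence.

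For \eqref{eq32}, I would apply the classical $W^{2,p}$-estimate for the Dirichlet Laplacian on $\widetilde U$ (\cite[Ch.~9]{GT98} or \cite[Ch.~6]{Eva10}), namely $\|g\|_{W^{2,p}(\widetilde U)} \leq C_0(p, \widetilde U) \|\Delta_Y g\|_{L^p(\widetilde U)}$ for $g \in W^{1,p}_0 \cap W^{2,p}(\widetilde U)$, and then unwind the chain-rule factors $\sqrt{\lambda_\alpha}$ to obtain the estimate on $U$. The special isotropic case $\lambda_2 = q\lambda_1$ in \eqref{eq33} follows from a second rescaling $Y \mapsto \sqrt{\lambda_1}\, Y$, which collapses the anisotropy into the single prefactor $1/\lambda_1$. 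The interpolation \eqref{eq34} is the Gagliardo-Nirenberg inequality on the bounded Lipschitz domain $U$, and the dependence on $\lambda_1, \lambda_2$ enters only through the rescaling constants. Assertion (iii) is a direct application of \cite[Ch.~7]{Paz83}: $-A_p$ is a strongly elliptic operator of second order with constant coefficients on a bounded $C^3$-domain under Dirichlet conditions, hence generates a bounded analytic semigroup on $L^p(U)$.

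For the remaining items (iv)--(vi) I would work with the quadratic form
\[
\mathfrak{a}[f, g] := \lambda_1 \int_U \partial_{X_1} f\, \partial_{X_1} g \, dX + \lambda_2 \int_U \partial_{X_2} f\, \partial_{X_2} g \, dX, \qquad f, g \in W^{1,2}_0(U),
\]
which is densely defined, closed, symmetric, and non-negative on $L^2(U)$. By the Kato--Friedrichs representation theorem, $\mathfrak{a}$ is associated with a unique non-negative self-adjoint operator, which is identified with $A$ through integration by parts together with Lemma~\ref{lem33}(iv), yielding (v). The spectral theorem then shows that $-A$ generates a contraction $C_0$-semigroup $\mathrm{e}^{-tA}$ on $L^2(U)$, giving (iv). For (vi), the form domain $W^{1,2}_0(U)$ coincides with $D(A^{1/2})$ by the standard identification of the square root of a non-negative self-adjoint operator with its form domain, and the identity \eqref{eq35} follows from $\|A^{1/2} f\|_{L^2}^2 = \mathfrak{a}[f, f]$, which first holds on $D(A)$ by Lemma~\ref{lem33}(iv) and extends to $D(A^{1/2})$ by density. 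The main labor is the constant bookkeeping in \eqref{eq32}--\eqref{eq33}; the other five statements are direct compilations from \cite{GT98}, \cite{Paz83}, \cite{Eva10} once the rescaling is set up.
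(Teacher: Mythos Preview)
Your proposal is correct, and for parts (ii)--(vi) it is essentially what the paper does: the paper simply cites Lemma~\ref{lem33}, \cite[Chapter~7]{Paz83}, \cite[Chapters~7--9]{GT98}, and \cite[Chapter~6]{Eva10} for all of these statements without further argument, so your more explicit form-theoretic derivation of (iv)--(vi) via $\mathfrak a[f,g]$ is a harmless elaboration of what the paper leaves implicit.

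The one place where your route genuinely differs is the derivation of \eqref{eq33}. You propose an anisotropic coordinate change $Y_\alpha = X_\alpha/\sqrt{\lambda_\alpha}$ followed by a second isotropic rescaling to isolate the $1/\lambda_1$ factor. The paper instead never changes variables at all: it introduces the auxiliary operator $A_p' f = -(\partial_{X_1}^2 + q\,\partial_{X_2}^2) f$ on the \emph{same} domain $U$, observes that $A_p = \lambda_1 A_p'$, applies the already-established estimate \eqref{eq32} to $A_p'$ (whose constant depends only on $(q,p,U)$), and reads off $C_\sharp = C_\sharp'(q,p,U)/\lambda_1$ directly. This is shorter and avoids the bookkeeping you mention, since the domain $U$ stays fixed and no Jacobian factors or $\widetilde U$-dependent constants appear. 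Your approach also works, but the transformed domain $\widetilde U$ depends on $\lambda_1,\lambda_2$, so you have to argue separately (via the second rescaling) that this dependence factors out cleanly; the paper's algebraic factorization sidesteps that entirely.
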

\begin{proof}[Proof of Lemma \ref{lem34}]
We only derive \eqref{eq33}. Assume that $\lambda_2 = q \lambda_1$ for some $q >0$. Set
\begin{equation}\label{eq36}
\begin{cases}
A_p' f = - \left( \frac{\partial^2}{\partial X_1^2} + q \frac{\partial^2 }{\partial X_2^2} \right) f,\\
D (A_p') = L^p(U) \cap W^{1,p}_0 (U) \cap W^{2,p}(U).
\end{cases}
\end{equation}
From the elliptic regularity \eqref{eq32} of $A_p'$, there is $C_\sharp' = C_\sharp' (q, p, U) >0$ such that for all $f \in D (A_p')$ 
\begin{equation*}
\| f \|_{L^p (U)} + \| \nabla_X f \|_{L^p(U)} + \| \nabla_X^2 f \|_{L^p(U)} \leq C_\sharp' \| A_p' f \|_{L^p(U)}.
\end{equation*}
Since $A_p = \lambda_1 A_p'$ and $D (A_p) = D (A_p')$, we have \eqref{eq33}. 
\end{proof}

Since $L^2(U)$ is a Hilbert space and $- A $ generates a bounded analytic semigroup on $L^2(U)$, it follows from \cite{Des64} to find that the operator $A$ has the maximal $L^p$-regularity (Proposition \ref{prop81}). Therefore we have the following lemma.
\begin{lemma}[Maximal $L^2$-regularity of $A$]\label{lem35}{ \ }\\
For each $T \in (0, \infty ]$ and $(F , V_0 ) \in L^2 (0, T ; L^2(U) ) \times D (A^{\frac{1}{2}})$, there exists a unique function $V$ satisfying the system: 
\begin{equation}\label{eq37}
\begin{cases}
\frac{d}{d t}V + A V = F & \text{ on } (0,T),\\
V|_{t =0} = V_0,
\end{cases}
\end{equation}
and the estimate:
\begin{multline}\label{eq38}
\| d V/{d t} \|_{L^2(0,T;H)} + \| A V \|_{L^2(0,T;L^2(U))} \\
\leq \sqrt{2} \| A^{\frac{1}{2}}V_0 \|_{L^2(U)} + C_A \| F \|_{L^2(0,T;L^2(U))} .
\end{multline}
Here the positive constant $C_A$ depends only on $(\lambda_1 , \lambda_2 , U)$. Moreover, if $\lambda_2 = q \lambda_1 $ for some $q >0$, then we write
\begin{equation}\label{eq39}
C_A = \frac{C_A' ( q ,U )}{\lambda_1}.
\end{equation}
\end{lemma}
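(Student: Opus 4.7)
\textbf{Proof plan for Lemma \ref{lem35}.}
The plan is to invoke the de Simon maximal $L^2$-regularity theorem (Proposition \ref{prop81} in the Appendix) for $-A$, which generates a bounded analytic semigroup on the Hilbert space $L^2(U)$ by Lemma \ref{lem34}(iii). I would decompose the solution as $V = V_1 + V_2$, where $V_1(t) := e^{-tA} V_0$ absorbs the initial datum and $V_2$ solves the inhomogeneous equation $V_2' + A V_2 = F$ with $V_2(0) = 0$; the estimate \eqref{eq38} then follows by estimating each part separately, and uniqueness is a routine energy argument on the difference of two candidate solutions.

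For $V_1$, the identity $V_1' = -A V_1$ reduces matters to controlling $\| A V_1 \|_{L^2(0,T;L^2(U))}$. Since $A$ is non-negative selfadjoint by Lemma \ref{lem34}(v), the spectral theorem supplies a resolution $\{ E_\lambda \}_{\lambda \geq 0}$, and Fubini's theorem yields
\begin{equation*}
\int_0^T \| A e^{-tA} V_0 \|_{L^2(U)}^2 \, dt \leq \int_0^\infty \lambda^2 \left( \int_0^\infty e^{-2 t \lambda} \, dt \right) d \| E_\lambda V_0 \|^2 = \frac{1}{2} \| A^{\frac{1}{2}} V_0 \|_{L^2(U)}^2,
\end{equation*}
so that $\| A V_1 \|_{L^2(0,T;L^2(U))} \leq \tfrac{1}{\sqrt{2}} \| A^{\frac{1}{2}} V_0 \|_{L^2(U)}$, and combining with $\| V_1' \|_{L^2} = \| A V_1 \|_{L^2}$ produces exactly the $\sqrt{2} \| A^{\frac{1}{2}} V_0 \|_{L^2(U)}$ factor in \eqref{eq38}. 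For $V_2$, Proposition \ref{prop81} applied to $V_2' + A V_2 = F$ with $V_2(0) = 0$ gives $\| V_2' \|_{L^2} + \| A V_2 \|_{L^2} \leq C_A \| F \|_{L^2}$, with $C_A$ depending only on the generator $-A$ and hence only on $(\lambda_1, \lambda_2, U)$.

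The scaling \eqref{eq39} is a rescaling computation in the spirit of Lemma \ref{lem34}(i): letting $A'$ denote the operator in \eqref{eq36} so that $A = \lambda_1 A'$ and, by the selfadjoint functional calculus, $A^{\frac{1}{2}} = \sqrt{\lambda_1} (A')^{\frac{1}{2}}$, one rewrites the evolution equation in terms of $A'$ after a time substitution $s = \lambda_1 t$, applies the maximal $L^2$-regularity estimate for $A'$ with constant $C_A'(q, U)$, and tracks how the norms of $V'$, $AV$, $A^{\frac{1}{2}}V_0$, and $F$ transform in $\lambda_1$ to read off the claimed dependence. The main obstacle I anticipate is pinning down the \emph{explicit} constant $\sqrt{2}$ in \eqref{eq38} rather than an unspecified bound; this is precisely why I split off $V_1$ and handle it by the spectral calculus for $A$ directly, reserving the abstract de Simon theorem for the forcing part where only the qualitative constant $C_A$ is needed.
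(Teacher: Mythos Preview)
Your proposal is correct and follows essentially the same approach as the paper: the same decomposition $V = V_1 + V_2$ into homogeneous and forced parts, the same use of de Simon's theorem (Proposition \ref{prop81}) for $V_2$, and the same time-rescaling via $A = \lambda_1 A'$ for \eqref{eq39}. The only cosmetic difference is that the paper derives the key bound $\int_0^\infty \| A e^{-tA} V_0 \|_{L^2(U)}^2 \, dt \leq \tfrac{1}{2} \| A^{1/2} V_0 \|_{L^2(U)}^2$ by an energy argument (differentiating $\| e^{-tA} A^{1/2} V_0 \|^2$ and integrating) rather than by the spectral theorem, but the two computations are equivalent for a non-negative selfadjoint generator.
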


\begin{proof}[Proof of Lemma \ref{lem35}]
Fix $V_0 \in D (A^{\frac{1}{2}})$ and $F \in L^2(0,\infty;L^2(U))$. We first show
\begin{equation}\label{Eq310}
\left( \int_0^\infty \| A \mathrm{e}^{- t A} V_0 \|_{L^2(U)}^2 { \ }d t \right)^{\frac{1}{2}} \leq \frac{1}{ \sqrt{2} } \| A^{\frac{1}{2}} V_0 \|_{L^2(U)}.
\end{equation}
Set $V_\natural (t) = \mathrm{e}^{- t A} A^{\frac{1}{2}} V_0$. Since $A$ is a non-negative selfadjoint operator and $\mathrm{e}^{- t A}$ is an analytic semigroup on $L^2(U)$, we check that
\begin{align*}
\frac{1}{2} \frac{d}{d t} \| V_\natural (t) \|_{L^2(U)}^2 = \int_U (- A V_\natural ) V_\natural { \ }d X = - \| A^{\frac{1}{2}} V_\natural (t) \|_{L^2 (U)}^2.
\end{align*}
Integrating with respect to time, we see that for all $t >0$
\begin{align*}
\frac{1}{2} \| V_\natural (t) \|_{L^2(U)}^2 + \int_0^t \| A^{\frac{1}{2}} V_\natural (t) \|_{L^2 (U)}^2 { \ }d t &= \frac{1}{2} \| V_\natural (0) \|_{L^2(U)}^2\\
 &= \frac{1}{2} \| A^{\frac{1}{2}} V_0 \|_{L^2(U)}^2.
\end{align*}
Since
\begin{equation*}
A^{\frac{1}{2}} V_\natural (t)  = A^{\frac{1}{2} } \mathrm{e}^{- t A} A^{\frac{1}{2}} V_0 = A \mathrm{e}^{- t A} V_0,
\end{equation*}
we have \eqref{Eq310}. 

To study system \eqref{eq37}, we now consider the following two systems:
\begin{equation}\label{Eq311}
\begin{cases}
\frac{d}{d t} V_\sharp + A V_\sharp = 0 \text{ on } (0,\infty),\\
V_\sharp |_{t =0} = V_0, 
\end{cases}
\end{equation}
\begin{equation}\label{Eq312}
\begin{cases}
\frac{d}{d t} V_\flat + A V_\flat = F \text{ on } (0,\infty),\\
V_\flat |_{t =0} = 0.
\end{cases}
\end{equation}
Since $- A$ generates an analytic $C_0$-semigroup on $L^2(U)$, we find that $V_\sharp = V_\sharp (t) =\mathrm{e}^{- t A} V_0$ and 
\begin{equation*}
V_\sharp \in C ([0, \infty ) ; L^2 (U)) \cap C((0, \infty); D (A)) \cap C^1 ((0,\infty ) ; L^2(U)).
\end{equation*}
Since
\begin{equation*}
\frac{d V_\sharp}{d t} = - A V_\sharp = - A \mathrm{e}^{ - t A} V_0,
\end{equation*}
it follows from \eqref{Eq310} to check that
\begin{equation}\label{Eq313}
\| d V_\sharp /{d t} \|_{L^2 (0, \infty ; L^2(U))} + \| A V_\sharp \|_{L^2 (0, \infty ; L^2(U))} \leq \sqrt{2} \| A^{\frac{1}{2}} V_0 \|_{L^2(U)}.
\end{equation}
From the maximal $L^2$-regularity of $A$ (Proposition \ref{prop81}), there exists a unique function $V_\flat$ satisfying system \eqref{Eq312} and
\begin{equation}\label{Eq314}
\| d V_\flat /{d t} \|_{L^2 (0, \infty ; L^2(U))} + \| A V_\flat \|_{L^2 (0, \infty ; L^2(U))} \leq C_A \| F \|_{L^2(0,\infty ; L^2(U))}.
\end{equation}
Here $C_A >0$ depends only on $U ,\lambda_1 , \lambda_2$. Set $V = V (t) = V_\natural (t) + V_\flat (t)$. It is easy to check that $V$ satisfies system \eqref{eq37}. From \eqref{Eq313} and \eqref{Eq314}, we have
\begin{multline*}
\| d V /{d t} \|_{L^2 (0, \infty ; L^2(U))} + \| A V \|_{L^2 (0, \infty ; L^2(U))} \\
\leq \sqrt{2} \| A^{\frac{1}{2}} V_0 \|_{L^2(U)} + C_A \| F \|_{L^2(0,\infty ; L^2(U))} .
\end{multline*}
Applying the maximal $L^2$-regularity of $A$, we easily deduce the uniqueness of solutions to system \eqref{eq37} with $(V_0 , F)$.

Finally, we derive \eqref{eq39}. To this end, we consider the following system:
\begin{equation}\label{Eq315}
\begin{cases}
\frac{d}{d t}W_\flat + A' W_\flat = \frac{F}{\lambda_1} \text{ on } (0,\infty),\\
W_\flat|_{t =0} = 0.
\end{cases}
\end{equation}
Here $A' := A_2'$ is the operator defined by \eqref{eq36}. From the maximal $L^2$-regularity of $A'$, there exists a unique function $W_\flat$ satisfying \eqref{Eq315} and
\begin{equation}\label{Eq316}
\| d W_\flat /{d t} \|_{L^2 (0, \infty ; L^2(U))} + \| A' W_\flat \|_{L^2 (0, \infty ; L^2(U))} \leq \frac{C_{A'}}{\lambda_1} \| F \|_{L^2(0,\infty ; L^2(U))} .
\end{equation}
Here $C_{A'} >0$ depends only on $q, U$. Now we set $V_{\flat \flat} = V_{\flat \flat} (t) = W_\flat (\lambda_1 t)$. It is easy to see that $V_{\flat \flat}$ satisfies \eqref{Eq312}. From \eqref{Eq316}, we have
\begin{equation}\label{Eq317}
\left\| \frac{d V_{\flat \flat}}{d t} \right\|_{L^2 (0, \infty ; L^2(U))} + \| A V_{\flat \flat} \|_{L^2 (0, \infty ; L^2(U))} \leq \frac{C_{A'}}{\lambda_1} \| F \|_{L^2(0,\infty ; L^2(U))} .
\end{equation}
Combing \eqref{Eq313} and \eqref{Eq317} gives
\begin{multline*}
\| d V /{d t} \|_{L^2 (0, \infty ; L^2(U))} + \| A V \|_{L^2 (0, \infty ; L^2(U))} \\
\leq \sqrt{2} \| A^{\frac{1}{2}} V_0 \|_{L^2(U)} + \frac{C_{A'}}{\lambda_1} \| F \|_{L^2(0,\infty ; L^2(U))} .
\end{multline*}
Thus, we have \eqref{eq39}. Therefore, the lemma follows. 
\end{proof}

\section{Function Spaces on Evolving Surfaces}\label{sect4}
In this section, we introduce and study function spaces on the evolving surface $\Gamma (t)$. Let $\widetilde{X} = \widetilde{X} (x,t)$ be the inverse mapping of $\widehat{x} = \widehat{x} (X,t)$. For each $0 \leq t <T$, $\psi = \psi (X)$, and $\varphi = \varphi (x,t)$,
\begin{align*}
\widehat{\psi} & = \widehat{\psi} (X , t) := \psi ( \widehat{x} (X, t)),\\
\widetilde{\varphi} &= \widetilde{\varphi}(x , t) : = \widetilde{\varphi} ( \widetilde{X}(x,t)).
\end{align*}
Moreover, for $\psi = \psi (x,t)$ and $\varphi (X,t)$,
\begin{align*}
\widehat{\psi} & = \widehat{\psi} (X , t) := \psi ( \widehat{x} (X, t) , t),\\
\widetilde{\varphi} &= \widetilde{\varphi}(x , t) : = \varphi ( \widetilde{X}(x,t) , t).
\end{align*}

\subsection{Function Spaces on Evolving Surface $\Gamma (t)$}\label{subsec41}
Let us define function spaces on the evolving surface $\Gamma (t)$. Throughout this subsection, we fix $t \in [0,T)$.

For $k=0,1,2$, and $1 \leq p < \infty$,
\begin{align*}
C^k ( \Gamma (t)) & := \{ \psi : \Gamma (t) \to \mathbb{R} ;{ \ }\psi = \widetilde{\varphi}, { \ }\varphi \in C^k ( U )\},\\
C^k ( \overline{\Gamma (t)}) & := \{ \psi : \overline{\Gamma (t)} \to \mathbb{R} ;{ \ }\psi = \widetilde{\varphi}, { \ }\varphi \in C^k (\overline{U})\},\\
C^k_0 ( \Gamma (t)) & := \{ \psi : \Gamma (t) \to \mathbb{R} ;{ \ }\psi = \widetilde{\varphi}, { \ }\varphi \in  C_0^k(U)\},\\
L^p ( \Gamma (t)) & := \{ \psi : \Gamma (t) \to \mathbb{R} ;{ \ }\psi = \widetilde{\varphi}, { \ }\varphi \in L^p(U),{ \ }\| \psi \|_{L^p (\Gamma (t))} < \infty \},\\
L^\infty ( \Gamma (t)) & := \{ \psi : \Gamma (t) \to \mathbb{R} ;{ \ }\psi = \widetilde{\varphi}, { \ }\varphi \in L^\infty (U),{ \ }\| \psi \|_{L^\infty (\Gamma (t))} < \infty \}.
\end{align*}
Here
\begin{align*}
\| \psi \|_{L^p( \Gamma (t))} & := \left( \int_{\Gamma (t)} | \widetilde{\varphi} (x,t) |^p { \ }d \mathcal{H}^2_x \right)^{\frac{1}{p}},\\
\| \psi \|_{L^\infty ( \Gamma (t))} & := ess.sup_{x \in \Gamma (t)}|  \widetilde{\varphi} (x,t) |.
\end{align*}
Moreover, for $\psi \in C (\overline{\Gamma (t)}) (= C^0(\overline{ \Gamma (t)}) )$,
\begin{equation*}
\| \psi |_{\partial \Gamma (t)} \|_{L^p( \partial \Gamma (t))} := \left( \int_{\partial \Gamma (t)} | { \ }\widetilde{\varphi} |_{\partial \Gamma (t)} { \ }|^p { \ }d \mathcal{H}^1_x \right)^{\frac{1}{p}}.
\end{equation*}
It is easy to check that
\begin{align}
\| \psi \|_{L^p (\Gamma (t))}^p & = \int_U | \widehat{ \widetilde{\varphi}} (X , t)|^p \sqrt{ \mathcal{G} (X,t)} { \ }d X\notag\\
& \leq C (\lambda_{max}) \int_U | \varphi |^p { \ }d X = C \| \varphi \|_{L^p(U)}^p\label{eq41}
\end{align}
and that
\begin{equation*}
\| \psi \|_{L^p (\Gamma (t))}^p =  \int_U | \varphi |^p \sqrt{ \mathcal{G} (X,t)} { \ }d X \geq \lambda_{min} \| \varphi \|_{L^p(U)}^p.
\end{equation*}
We also see that for all $\psi_\sharp \in L^p (\Gamma (t))$ and $\psi_\flat \in L^{p'}(\Gamma (t))$
\begin{equation}\label{eq42}
\left| \int_{\Gamma (t)} \psi_\sharp \psi_\flat { \ }d \mathcal{H}^2_x \right| \leq \| \psi_\sharp \|_{L^p(\Gamma (t))} \| \psi_\flat \|_{L^{p'}(\Gamma (t))},
\end{equation}
where $1 \leq p ,p' \leq \infty$ such that $1/p+1/{p'}=1$.

Next we define a weak derivative for functions on the surface $\Gamma (t)$. For $\psi \in C^k ( \overline{\Gamma (t)})$ or $\psi \in C_0^k ( \Gamma (t))$, we define the differential operators $\partial_j^\Gamma$ and $\partial_i^\Gamma \partial_j^\Gamma$ as in Lemma \ref{lem31}. 
\begin{definition}[Weak derivatives]\label{def41}
Let $1 \leq p \leq \infty$, $\psi \in L^p ( \Gamma (t) )$, and $i, j=1,2,3$.\\ 
$(\mathrm{i})$ We say that $\partial^\Gamma_j \psi \in L^p (\Gamma (t))$ if there exists $\Psi \in L^p ( \Gamma (t))$ such that for all $\phi \in C_0^1 ( \Gamma (t))$,
\begin{equation*}
\int_{\Gamma (t)} \Psi \phi { \ }d \mathcal{H}^2_x = - \int_{\Gamma (t)} \psi (\partial_j^\Gamma \phi + H_\Gamma n_j \phi ) { \ }d \mathcal{H}^2_x.
\end{equation*}
In particular, we write $\partial_j^\Gamma \psi$ as $\Psi$.\\
$(\mathrm{ii})$ We say that $\partial^\Gamma_j \partial_i^ \Gamma\psi \in L^p (\Gamma (t))$ if $\partial^\Gamma_i \psi \in L^p (\Gamma (t))$ and there exists $\Psi \in L^p ( \Gamma (t))$ such that for all $\phi \in C_0^1 ( \Gamma (t))$,
\begin{equation*}
\int_{\Gamma (t)} \Psi \phi { \ }d \mathcal{H}^2_x = - \int_{\Gamma (t)} \partial_i^\Gamma \psi (\partial_j^\Gamma \phi + H_\Gamma n_j \phi ) { \ }d \mathcal{H}^2_x.
\end{equation*}
In particular, we write $\partial_j^\Gamma \partial_i^\Gamma \psi$ as $\Psi$.
\end{definition}
\noindent We easily see the uniqueness of weak derivatives. See \cite{DEKR18} for weak derivatives for functions on a closed surface.

Now we introduce Sobolev spaces on the surface $\Gamma (t)$. For $1 \leq p < \infty$,
\begin{align*}
W^{1,p} ( \Gamma (t)) & := \{ \psi : \Gamma (t) \to \mathbb{R} ;{ \ }\psi = \widetilde{\varphi}, { \ }\varphi \in W^{1,p}(U),{ \ }\| \psi \|_{W^{1,p} (\Gamma (t))} < \infty \},\\
W_0^{1,p} ( \Gamma (t)) & := \{ \psi : \Gamma (t) \to \mathbb{R} ;{ \ }\psi = \widetilde{\varphi}, { \ }\varphi \in W_0^{1,p}(U),{ \ }\| \psi \|_{W^{1,p} (\Gamma (t))} < \infty \},\\
W^{2,p} ( \Gamma (t)) & := \{ \psi : \Gamma (t) \to \mathbb{R} ;{ \ }\psi = \widetilde{\varphi}, { \ }\varphi \in W^{2,p}(U),{ \ }\| \psi \|_{W^{2,p} (\Gamma (t))} < \infty \}.
\end{align*}
Here
\begin{align*}
\| \psi \|_{W^{1,p}( \Gamma (t))} & := \left( \int_{\Gamma (t)} ( | \widetilde{\varphi}(x,t) |^p +  | \nabla_\Gamma \widetilde{\varphi}(x,t)  |^p ) { \ }d \mathcal{H}^2_x \right)^{\frac{1}{p}},\\
\| \psi \|_{W^{2,p}( \Gamma (t))} & := \left( \int_{\Gamma (t)} ( | \widetilde{\varphi}(x,t) |^p +  | \nabla_\Gamma \widetilde{\varphi}(x,t)|^p +  | \nabla_\Gamma^2 \widetilde{\varphi}(x,t)  |^p ) { \ }d \mathcal{H}^2_x \right)^{\frac{1}{p}}.
\end{align*}
From Lemma \ref{lem31} we check that
\begin{multline}\label{eq43}
\int_{\Gamma (t)} | \partial_j^\Gamma \psi (x )|^p { \ }d \mathcal{H}^2_x = \int_U \left| \mathfrak{g}^{\alpha \beta} \frac{\partial \widehat{x}_j}{\partial X_\alpha} \frac{\partial \varphi}{\partial X_\beta} (X, t) \right|^p\sqrt{ \mathcal{G} (X,t)} { \ }d X\\
 \leq  C (\lambda_{min} , \lambda_{max})  \int_U | \nabla_X \varphi |^p { \ }d X \leq C \| \varphi \|_{W^{1,p}(U)}^p \text{ if } \psi \in C^1 ( \Gamma (t) ) 
\end{multline}
and that
\begin{equation}\label{eq44}
\int_{\Gamma (t)} | \partial_i^\Gamma \partial_j^\Gamma \psi (x )|^p { \ }d \mathcal{H}^2_x \leq C (\lambda_{min} , \lambda_{max} ) \| \varphi \|_{W^{2,p} (U)}^p \text{ if } \psi \in C^2 ( \Gamma (t)).
\end{equation}
From Lemmas \ref{lem42} and \ref{lem44}, we see that \eqref{eq43} holds for all $\psi \in W^{1,p} ( \Gamma (t))$ and that \eqref{eq44} holds for all $\psi \in W^{2,p} ( \Gamma (t))$.

\begin{lemma}[Properties of $W^{1,p}(\Gamma (t))$ and $W^{2,p} ( \Gamma (t))$]\label{lem42}{ \ }\\
$(\mathrm{i})$ Let $\psi \in W^{1,p} (\Gamma (t))$ and $\varphi \in W^{1,p}(U)$ such that $\psi = \widetilde{\varphi}$. Then for each $j=1,2,3$, $\partial_j^\Gamma \psi \in L^p (\Gamma (t))$ and
\begin{equation*}
\int_{\Gamma (t)} \partial_j^\Gamma \psi { \ }d \mathcal{H}^2_x = \int_U \mathfrak{g}^{\alpha \beta} \frac{\partial \widehat{x}_j}{\partial X_\alpha} \frac{\partial \varphi}{\partial X_\beta} \sqrt{ \mathcal{G} } { \ }d X.
\end{equation*}
$(\mathrm{ii})$ Let $\psi \in W^{2,p} (\Gamma (t))$ and $\varphi \in W^{2,p}(U)$ such that $\psi = \widetilde{\varphi}$. Then for each $i,j = 1,2,3$, $\partial_j^\Gamma \partial_i^\Gamma \psi \in L^p (\Gamma (t))$ and
\begin{equation*}
\int_{\Gamma (t)} \partial_i^\Gamma \partial_j^\Gamma \psi { \ }d \mathcal{H}^2_x =  \int_U \mathfrak{g}^{\zeta \eta} \frac{\partial \widehat{x}_i}{\partial X_\zeta} \frac{\partial}{\partial X_\eta} \left( \mathfrak{g}^{\alpha \beta} \frac{\partial \widehat{x}_j}{\partial X_\alpha} \frac{\partial \varphi }{\partial X_\beta} \right) \sqrt{ \mathcal{G} } { \ }d X.
\end{equation*}
$(\mathrm{iii})$ (Formula of the integration by parts) Let $1 < p,p' < \infty$ such that $1/p + 1/{p'}=1$. Then for each $j=1,2,3$, $\psi_\sharp \in W^{1,p} ( \Gamma (t))$, and $\psi_\flat \in W^{1,p'}_0 ( \Gamma (t))$,
\begin{equation}\label{eq45}
\int_{\Gamma (t)} \psi_\sharp ( \partial_j^\Gamma \psi_\flat ) { \ }d \mathcal{H}^2_x = - \int_{\Gamma (t)} (\partial_j^\Gamma \psi_\sharp + H_\Gamma n_j \psi_\sharp ) \psi_\flat { \ }d \mathcal{H}^2_x.
\end{equation}
\end{lemma}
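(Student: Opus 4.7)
The plan is to prove all three parts by a common approximation strategy: approximate the Sobolev-class functions on $U$ by smooth functions on $\overline{U}$, transfer the classical identities of Lemmas \ref{lem31} and \ref{lem32} to the surface, and pass to the limit using the pullback estimate \eqref{eq41}.

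For part (i), given $\psi = \widetilde{\varphi}$ with $\varphi \in W^{1,p}(U)$, I would first introduce the candidate $\Psi \in L^p(\Gamma(t))$ defined by $\widehat{\Psi}(X,t) := \mathfrak{g}^{\alpha\beta}(\partial \widehat{x}_j/\partial X_\alpha)(\partial \varphi/\partial X_\beta)$. The uniform bounds on $\mathfrak{g}^{\alpha\beta}$ and $\partial \widehat{x}_j/\partial X_\alpha$ from Definition \ref{def21} ensure $\Psi \in L^p(\Gamma(t))$, exactly as in the smooth-case computation \eqref{eq43}. To verify that $\Psi$ satisfies Definition \ref{def41}, I approximate $\varphi$ in $W^{1,p}(U)$ by $\varphi_k \in C^2(\overline{U})$ and set $\psi_k = \widetilde{\varphi_k} \in C^2(\overline{\Gamma (t)})$. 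For each test function $\phi \in C_0^1(\Gamma(t))$, I apply Lemma \ref{lem32} to the vector field whose $j$-th component equals $\psi_k \phi$ with the other components zero, combine with the product rule $\partial_j^\Gamma(\psi_k \phi) = \phi\,\partial_j^\Gamma \psi_k + \psi_k\,\partial_j^\Gamma \phi$, and observe that the co-normal boundary term vanishes since $\phi$ has compact support, to obtain
\[
\int_{\Gamma(t)} (\partial_j^\Gamma \psi_k)\phi\, d\mathcal{H}^2_x = -\int_{\Gamma(t)} \psi_k (\partial_j^\Gamma \phi + H_\Gamma n_j \phi)\, d\mathcal{H}^2_x.
\]
Because Lemma \ref{lem31}(ii) applied via localization identifies $\widehat{\partial_j^\Gamma \psi_k}$ pointwise with $\mathfrak{g}^{\alpha\beta}(\partial \widehat{x}_j/\partial X_\alpha)(\partial \varphi_k/\partial X_\beta)$, the pullback of $\partial_j^\Gamma \psi_k$ converges to $\widehat{\Psi}$ in $L^p(U)$, and passing to the limit yields Definition \ref{def41}(i) with $\partial_j^\Gamma \psi = \Psi$. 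The stated integral formula then follows from Lemma \ref{lem31}(i) applied to $\Psi$.

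For part (ii), I would iterate the argument. From (i), $\partial_i^\Gamma \psi \in L^p(\Gamma(t))$ with explicit pullback $\mathfrak{g}^{\alpha\beta}(\partial \widehat{x}_i/\partial X_\alpha)(\partial \varphi/\partial X_\beta)$. Approximating $\varphi$ in $W^{2,p}(U)$ by $\varphi_k \in C^3(\overline{U})$, Lemma \ref{lem31}(iii) supplies the correct candidate for the pullback of $\partial_j^\Gamma \partial_i^\Gamma \psi$, and the weak condition of Definition \ref{def41}(ii) is verified by the same surface-divergence-theorem computation applied to $(\partial_i^\Gamma \psi_k)\phi$, followed by passage to the limit using \eqref{eq43}--\eqref{eq44} applied to differences.

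For part (iii), identity \eqref{eq45} is obtained by a symmetric variant of the same argument. Approximating $\psi_\sharp$ in $W^{1,p}(\Gamma(t))$ by $\psi_{\sharp,k} \in C^1(\overline{\Gamma (t)})$ and $\psi_\flat$ in $W_0^{1,p'}(\Gamma(t))$ by $\psi_{\flat,k} \in C_0^1(\Gamma(t))$ (the latter density inherits from the classical density of $C_0^1(U)$ in $W_0^{1,p'}(U)$ via the parameterization), one applies Lemma \ref{lem32} to the vector field with $j$-th component $\psi_{\sharp,k}\psi_{\flat,k}$ (others zero); the co-normal boundary term vanishes by the compact support of $\psi_{\flat,k}$, and the product rule gives \eqref{eq45} at the smooth level. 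Both sides are continuous in the $W^{1,p}\times W^{1,p'}$ topology by H\"{o}lder's inequality \eqref{eq42} together with the uniform bound on $H_\Gamma n_j$ over $\overline{\Gamma (t)}$, so the limit is legitimate. The main obstacle throughout is the careful identification of the pullback candidate for the weak tangential derivative in (i)--(ii); once this is pinned down, $L^p(\Gamma(t))$-convergence of the candidates under $W^{k,p}(U)$-approximation follows routinely from the smooth-case estimates \eqref{eq43}--\eqref{eq44} applied to differences.
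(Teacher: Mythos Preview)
Your proposal is correct and follows essentially the same approximation strategy as the paper: approximate $\varphi$ by smooth functions on $\overline{U}$, apply the surface divergence theorem (Lemma~\ref{lem32}) at the smooth level, and pass to the limit using \eqref{eq41}--\eqref{eq44}. The only cosmetic difference is that you pin down the candidate $\Psi$ explicitly via its pullback from the outset, whereas the paper first obtains $\Psi$ as an abstract $L^p(\Gamma(t))$-limit of $\partial_j^\Gamma\psi_m$ by completeness and only afterwards identifies its integral formula; both routes are equivalent.
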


\begin{proof}[Proof of Lemma \ref{lem42}]
We only prove $(\mathrm{i})$ since $(\mathrm{ii})$ and $(\mathrm{iii})$ are similar. Let $\psi \in W^{1,p} (\Gamma (t))$ and $\varphi \in W^{1,p}(U)$ such that $\psi = \widetilde{\varphi}$. Fix $j=1,2,3$. Since $C^1 (\overline{U}) \cap W^{1,p}(U)$ is dense in $W^{1,p} (U)$, there are $\varphi_m \in C^1( \overline{U}) \cap W^{1,p}(U)$ such that
\begin{equation}\label{eq46}
\lim_{m \to \infty} \| \varphi - \varphi_m \|_{W^{1,p}(U)} = 0.
\end{equation}
Set $\psi_m = \widetilde{\varphi_m}$. By definition, we find that $\psi_m \in C^1 (\overline{\Gamma (t) })$. Applying Lemma \ref{lem32}, we see that for all $\phi \in C_0^1 (\Gamma (t))$,
\begin{equation}\label{eq47}
\int_{\Gamma (t)} (\partial_j^\Gamma \psi_m ) \phi { \ }d \mathcal{H}^2_x = - \int_{\Gamma (t)} \psi_m (\partial_j^\Gamma \phi + H_\Gamma n_j \phi ) { \ }d \mathcal{H}^2_x.
\end{equation}
By \eqref{eq41}, \eqref{eq43}, and \eqref{eq46}, we see that
\begin{equation*}
\| \psi_m - \psi_{m'} \|_{W^{1,p} ( \Gamma (t))} \leq C \| \varphi_m - \varphi_{m'} \|_{W^{1,p} (U)} \to 0 { \ }(m,m' \to \infty ).
\end{equation*}
Since $L^p(\Gamma (t))$ is a Banach space, there is $\Psi \in L^p ( \Gamma (t))$ such that
\begin{equation}\label{eq48}
\lim_{m \to \infty} \| \partial_j^\Gamma \psi_m - \Psi \|_{L^p ( \Gamma (t))} =0.
\end{equation}
Using \eqref{eq42}, \eqref{eq47}, \eqref{eq48}, and
\begin{equation*}
\lim_{m \to \infty} \| \psi_m - \psi \|_{L^p ( \Gamma (t))} =0,
\end{equation*}
we check that for all $\phi \in C_0^1 ( \Gamma (t))$
\begin{equation*}
\int_{\Gamma (t)} \Psi \phi { \ }d \mathcal{H}^2_x = - \int_{\Gamma (t)} \psi (\partial_j^\Gamma \phi + H_\Gamma n_j \phi ) { \ }d \mathcal{H}^2_x.
\end{equation*}
This implies that
\begin{equation*}
\partial_j^\Gamma \psi \in L^p ( \Gamma (t)).
\end{equation*}
From the assertion $(\mathrm{ii})$ of Lemma \ref{lem31}, we have
\begin{equation*}
\int_{\Gamma (t)} \partial_j^\Gamma \psi_m{ \ }d \mathcal{H}^2_x = \int_U \mathfrak{g}^{\alpha \beta} \frac{\partial \widehat{x}_j}{\partial X_\alpha} \frac{\partial \varphi_m}{\partial X_\beta} \sqrt{ \mathcal{G} } { \ }d X.
\end{equation*}
Using \eqref{eq42}, the H\"{o}lder inequality, \eqref{eq46}, and \eqref{eq48}, we see that
\begin{equation*}
\int_{\Gamma (t)} \partial_j^\Gamma \psi { \ }d \mathcal{H}^2_x = \int_U \mathfrak{g}^{\alpha \beta} \frac{\partial \widehat{x}_j}{\partial X_\alpha} \frac{\partial \varphi}{\partial X_\beta} \sqrt{ \mathcal{G} } { \ }d X.
\end{equation*}
Note that $\int_{\Gamma (t)} 1 { \ } d \mathcal{H}^2_x + \int_U 1{ \ }d X < +\infty$. Therefore, the lemma follows. 
\end{proof}

Now we study the trace operator $\gamma: W^{1,p} (\Gamma (t)) \to L^p (\partial \Gamma (t))$.
\begin{proposition}\label{prop43}For each $\psi \in W_0^{1,p} (\Gamma (t))$,
\begin{equation*}
\| \gamma \psi \|_{L^p(\partial \Gamma (t))} = 0.
\end{equation*}
Here $\gamma : W^{1,p} (\Gamma (t)) \to L^p (\partial \Gamma (t))$ is the trace operator defined by the proof of Proposition \ref{prop43}.
\end{proposition}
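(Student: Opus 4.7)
The plan is to define the trace operator $\gamma$ on $\Gamma(t)$ by pulling back to $U$, applying the classical $W^{1,p}(U)$ trace theorem, and then pushing the resulting boundary function forward to $\partial\Gamma(t)$ via the diffeomorphism $\widehat{x}(\cdot,t)|_{\partial U}$. Since $\partial U$ is $C^3$ by Definition 2.1 and $\widehat{x}(\cdot,t)\in[C^3(\overline{U})]^3$ with the non-degeneracy condition $(\mathrm{iv})$ of Definition 2.1, the restriction $\widehat{x}(\cdot,t):\partial U\to\partial\Gamma(t)$ is a $C^3$-diffeomorphism between compact $1$-manifolds with uniformly bounded Jacobian; in particular, it induces a bi-Lipschitz equivalence between the Hausdorff measure $d\mathcal{H}^1_x$ on $\partial\Gamma(t)$ and the line element on $\partial U$.

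Concretely, given $\psi\in W^{1,p}(\Gamma(t))$, write $\psi=\widetilde{\varphi}$ for the unique $\varphi\in W^{1,p}(U)$ guaranteed by the definition of $W^{1,p}(\Gamma(t))$. The classical trace theorem (see, e.g., Evans' \emph{Partial Differential Equations}, Chapter 5) yields a bounded linear operator $\gamma_U:W^{1,p}(U)\to L^p(\partial U)$ with $\gamma_U\varphi|_{\partial U}=\varphi|_{\partial U}$ whenever $\varphi\in C^1(\overline{U})\cap W^{1,p}(U)$, together with the characterization $\gamma_U\varphi=0$ if and only if $\varphi\in W_0^{1,p}(U)$. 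I then define
\begin{equation*}
(\gamma\psi)(x):=(\gamma_U\varphi)(\widetilde{X}(x,t)),\qquad x\in\partial\Gamma(t),
\end{equation*}
and verify, via the change-of-variables formula together with $(\mathrm{iv})$ and $(\mathrm{v})$ of Definition 2.1 applied on $\partial U$, that
\begin{equation*}
\|\gamma\psi\|_{L^p(\partial\Gamma(t))}\leq C(\lambda_{min},\lambda_{max})\|\gamma_U\varphi\|_{L^p(\partial U)}\leq C'\|\varphi\|_{W^{1,p}(U)}\leq C''\|\psi\|_{W^{1,p}(\Gamma(t))},
\end{equation*}
where the last inequality uses the analogue of \eqref{eq41} and \eqref{eq43} with equivalences in both directions (the reverse direction $\|\varphi\|_{W^{1,p}(U)}\leq C\|\psi\|_{W^{1,p}(\Gamma(t))}$ follows because $\mathcal{G}\geq\lambda_{min}>0$ and the matrix $(\mathfrak{g}^{\alpha\beta})$ is uniformly positive definite).

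With $\gamma$ so defined, the proposition is immediate: if $\psi\in W_0^{1,p}(\Gamma(t))$, then by the definition of $W_0^{1,p}(\Gamma(t))$ the corresponding $\varphi$ lies in $W_0^{1,p}(U)$, so $\gamma_U\varphi=0$ in $L^p(\partial U)$, and consequently $\gamma\psi=0$ in $L^p(\partial\Gamma(t))$. The main technical point, and the only one requiring care, is the boundary change-of-variables argument showing that the pullback by $\widehat{x}(\cdot,t)$ restricted to $\partial U$ preserves $L^p$ integrability with constants depending only on $\lambda_{min}$ and $\lambda_{max}$; this is a direct computation using that the tangential derivatives of $\widehat{x}$ along $\partial U$ are bounded and bounded below thanks to Definition 2.1$(\mathrm{iv})$, and I would carry it out by locally straightening $\partial U$ in finitely many charts and invoking the standard one-dimensional change of variables in each chart.
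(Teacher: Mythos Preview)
Your proposal is correct and follows essentially the same approach as the paper: pull back to $U$, invoke the classical trace theorem there, and control the transfer to $\partial\Gamma(t)$ via a boundary change-of-variables estimate. The one stylistic difference is that the paper \emph{constructs} $\gamma$ by approximating $\varphi$ with $\varphi_m\in C^1(\overline{U})$, showing that $\widetilde{\varphi_m}|_{\partial\Gamma(t)}$ is Cauchy in $L^p(\partial\Gamma(t))$ via the explicit line-integral formula
\[
\int_{\partial\Gamma(t)} |f|^p\, d\mathcal{H}^1_x = \int_{\partial U} |f(\widehat{x}(X,t))|^p\, |n_1^U \mathfrak{g}_1 - n_2^U \mathfrak{g}_2|\, d\mathcal{H}^1_X,
\]
and then defining $\gamma\psi$ as the limit; whereas you take the classical trace $\gamma_U$ on $U$ as a black box and set $\gamma\psi := (\gamma_U\varphi)\circ\widetilde{X}(\cdot,t)$ directly. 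Your route is a bit more economical and avoids repeating the density argument, while the paper's version makes the boundary Jacobian explicit; the two definitions coincide and the core estimate $\|\gamma\psi\|_{L^p(\partial\Gamma(t))}\leq C\|\widehat{\gamma}\varphi\|_{L^p(\partial U)}$ is the same in both.
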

To prove Proposition \ref{prop43}, we prepare the following lemma.
\begin{lemma}\label{lem44}Let $1 \leq p < \infty$. Then\\
$(\mathrm{i})$ $C_0(\Gamma (t))$ is dense in $L^p(\Gamma (t))$.\\
$(\mathrm{ii})$ $C_0^1(\Gamma (t))$ is dense in $W_0^{1,p}(\Gamma (t))$.\\
$(\mathrm{iii})$ $C^1 (\overline{ \Gamma (t) })$ is dense in $W^{1,p}(\Gamma (t))$.\\
$(\mathrm{iv})$ $C^2 (\overline{ \Gamma (t) })$ is dense in $W^{2,p}(\Gamma (t))$.
\end{lemma}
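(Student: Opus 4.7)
The plan is to transport the classical density theorems on the Euclidean domain $U$ to the evolving surface $\Gamma(t)$ by means of the pullback $\varphi \mapsto \widetilde{\varphi}$. Fix $t \in [0,T)$. By Definition \ref{def21}, the map $\widehat{x}(\cdot,t) : \overline{U} \to \overline{\Gamma(t)}$ is a $C^3$-diffeomorphism whose Jacobian is bounded away from zero and from infinity uniformly in $X$ (the lower bound $\mathcal{G} \geq \lambda_{min} > 0$ from Definition \ref{def21}(iv) gives the former, the estimate in Definition \ref{def21}(v) gives the latter). By the very definitions in Subsection \ref{subsec41}, the pullback sends $C_0(U) \leftrightarrow C_0(\Gamma(t))$, $C_0^k(U) \leftrightarrow C_0^k(\Gamma(t))$, and $C^k(\overline{U}) \leftrightarrow C^k(\overline{\Gamma(t)})$ bijectively.

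First I would establish the two-sided norm equivalence
\begin{equation*}
c \| \varphi \|_{W^{k,p}(U)} \leq \| \widetilde{\varphi} \|_{W^{k,p}(\Gamma(t))} \leq C \| \varphi \|_{W^{k,p}(U)} \quad (k=0,1,2),
\end{equation*}
with constants depending only on $\lambda_{min}$, $\lambda_{max}$, $p$. The upper bounds are already recorded in \eqref{eq41}, \eqref{eq43}, and \eqref{eq44}; the matching lower bounds come from $\sqrt{\mathcal{G}} \geq \sqrt{\lambda_{min}}$ together with the observation that $\partial_{X_\alpha} \varphi$ (and its second-order counterpart) can be written as a linear combination of the quantities $\partial_j^\Gamma \widetilde{\varphi}$ whose coefficients are polynomials in $\mathfrak{g}^{\alpha\beta}$ and $\partial \widehat{x}_j/\partial X_\alpha$, all of which are bounded in $L^\infty(U)$ by Definition \ref{def21}(iv)--(v). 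Once this equivalence is available, the pullback is an isomorphism of Banach spaces between $W^{k,p}(U)$ and $W^{k,p}(\Gamma(t))$, and likewise between $W_0^{1,p}(U)$ and $W_0^{1,p}(\Gamma(t))$.

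With this machinery in place, the four density claims reduce to their Euclidean analogues on $U$. Since $U \subset \mathbb{R}^2$ is a bounded domain with $C^3$-boundary (hence Lipschitz), the following are classical: $C_c(U)$ is dense in $L^p(U)$; $C_c^1(U)$ is dense in $W_0^{1,p}(U)$; and $C^k(\overline{U})$ is dense in $W^{k,p}(U)$ for $k=1,2$ (by the standard extension-and-mollification argument). Given $\psi$ in one of the spaces on $\Gamma(t)$, write $\psi = \widetilde{\varphi}$ for the corresponding $\varphi$ on $U$, pick an approximating sequence $\varphi_m$ of the required smoothness class with $\varphi_m \to \varphi$ in the Euclidean Sobolev norm, and set $\psi_m := \widetilde{\varphi_m}$. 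The norm equivalence then forces $\psi_m \to \psi$ in the corresponding norm on $\Gamma(t)$, while $\psi_m$ automatically lies in $C_0(\Gamma(t))$, $C_0^1(\Gamma(t))$, $C^1(\overline{\Gamma(t)})$, or $C^2(\overline{\Gamma(t)})$ according to the case at hand. This settles $(\mathrm{i})$--$(\mathrm{iv})$.

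The only genuinely delicate point is the lower norm estimate $\|\varphi\|_{W^{k,p}(U)} \leq C \|\widetilde{\varphi}\|_{W^{k,p}(\Gamma(t))}$, because it requires inverting the Jacobian of $\widehat{x}(\cdot,t)$ and controlling the resulting algebraic expressions uniformly; this is where Definition \ref{def21}(iv)--(v) is essential. Everything else is a routine transfer of the classical Meyers--Serrin and compact-support mollification arguments through the $C^3$ change of variables. I would expect the author's proof to be organized exactly along these lines, very likely proving $(\mathrm{i})$ as a warm-up by direct approximation with compactly supported smooth functions on $U$ and then listing $(\mathrm{ii})$--$(\mathrm{iv})$ as analogous.
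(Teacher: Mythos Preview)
Your proposal is correct and follows essentially the same route as the paper: pull back to $U$, invoke the classical Euclidean density theorems, and push forward via $\varphi \mapsto \widetilde{\varphi}$. The paper proves only $(\mathrm{iii})$ explicitly and declares the others similar, exactly as you anticipated.

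One remark: you flag the lower norm estimate $\|\varphi\|_{W^{k,p}(U)} \leq C\|\widetilde{\varphi}\|_{W^{k,p}(\Gamma(t))}$ as ``the only genuinely delicate point'', but in fact it is not needed for this lemma at all. The paper's argument uses only the \emph{upper} bound \eqref{eq41}, \eqref{eq43}, \eqref{eq44}: given $\psi = \widetilde{\varphi}$ and an approximating sequence $\varphi_m \to \varphi$ in $W^{k,p}(U)$, one has directly
\[
\|\psi - \psi_m\|_{W^{k,p}(\Gamma(t))} = \|\widetilde{\varphi - \varphi_m}\|_{W^{k,p}(\Gamma(t))} \leq C\|\varphi - \varphi_m\|_{W^{k,p}(U)} \to 0.
\]
The full two-sided equivalence (hence the Banach-space isomorphism) is a stronger statement than density requires; the paper avoids it here. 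So your proof works, but the step you singled out as delicate can simply be dropped.
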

\begin{proof}[Proof of Lemma \ref{lem44}]
We only prove $(\mathrm{iii})$ since $(\mathrm{i})$, $(\mathrm{ii})$, and $(\mathrm{iv})$ are similar. Fix $\psi \in W^{1,p} ( \Gamma (t))$. By definition, there is $\varphi \in W^{1,p}(U)$ such that $\psi = \widetilde{\varphi}$. Since $C^1 (\overline{U}) \cap W^{1,p}(U)$ is dense in $W^{1,p} (U)$, there are $\varphi_m \in C^1( \overline{U}) \cap W^{1,p}(U)$ such that
\begin{equation}\label{eq49}
\lim_{m \to \infty} \| \varphi - \varphi_m \|_{W^{1,p}(U)} = 0.
\end{equation}
Set $\psi_m = \widetilde{\varphi_m}$. By definition, we find that $\psi_m \in C^1 (\overline{\Gamma (t) })$. By \eqref{eq41}, \eqref{eq43}, and \eqref{eq49}, we check that
\begin{align*}
\| \psi - \psi_m \|_{W^{1,p} (\Gamma (t))} & = \| \widetilde{\varphi} - \widetilde{\varphi_m} \|_{W^{1,p} (\Gamma (t))} \\
&\leq C \| \varphi - \varphi_m \|_{W^{1,p}(U)} \to 0 { \ }(m \to \infty ).
\end{align*}
Therefore, we conclude that $C^1 (\overline{ \Gamma (t) })$ is dense in $W^{1,p}(\Gamma (t))$. 
\end{proof}

\begin{proof}[Proof of Proposition \ref{prop43}]
We first introduce the trace operator $\gamma$ on $W^{1,p}( \Gamma (t))$. Let $\psi \in W^{1,p} (\Gamma (t))$. By definition, there are $\varphi \in W^{1,p}(U)$ and $\varphi_m \in C^1(\overline{U})$ such that $\psi = \widetilde{\varphi}$ and \eqref{eq49}. From the definition of line integral, we see that
\begin{equation*}
\int_{\partial \Gamma (t)} | f(x) |^p { \ }d \mathcal{H}^1_x = \int_{\partial U} |f (\widehat{x}(X,t))|^p | n_1^U \mathfrak{g}_1 - n_2^U \mathfrak{g}_2 |  { \ }d \mathcal{H}^1_X.
\end{equation*}
Here $n^U =n^U(X)= { }^t (n_1^U , n_2^U)$ is the unit outer normal vector at $ X \in \partial U$ and $\mathfrak{g}_\alpha = \partial \widehat{x}/{\partial X_\alpha}$. From $(n_1^U)^2 + (n_2^U) = 1$ and Definition \ref{def21}, we find that
\begin{equation*}
\| f \|_{L^p(\partial \Gamma (t))} \leq C(\lambda_{max}) \| \widehat{f} \|_{L^p(\partial U)}.
\end{equation*}
Since $\| \cdot \|_{L^p (\partial U)} \leq C (U) \| \cdot \|_{W^{1,p}(U)}$, we see that
\begin{align*}
\| \widetilde{\varphi_{m}} |_{\partial \Gamma (t)} - \widetilde{\varphi_{m'}} |_{\partial \Gamma (t)} \|_{L^p(\partial \Gamma (t))} & \leq C \| \varphi_{m} |_{\partial U} - \varphi_{m'} |_{\partial U} \|_{L^p(\partial U)}\\
 & \leq  C \| \varphi_{m} - \varphi_{m'} \|_{W^{1,p}(U)} \to 0 { \ }(m, m' \to \infty ).
\end{align*}
Therefore, we set
\begin{equation*}
\gamma \psi = \lim_{m \to \infty} \widetilde{\varphi}_m|_{\partial \Gamma (t)}\text{ in }L^p (\partial \Gamma (t)).
\end{equation*}
Since 
\begin{equation*}
\| \widetilde{\varphi}_m|_{\partial \Gamma (t)} \|_{L^p(\partial \Gamma (t))} \leq C \| \varphi_m |_{\partial U} \|_{L^p(\partial U)},
\end{equation*}
we use \eqref{eq49} to see that
\begin{equation*}
\| \gamma \psi \|_{L^p(\partial \Gamma (t))} \leq C \| \widehat{\gamma} \varphi \|_{L^p(\partial U)}.
\end{equation*}
Here $\widehat{\gamma}: W^{1,p}(U) \to L^p (\partial U)$ is the trace operator.

Now we assume that $\psi \in W^{1,p}_0 ( \Gamma (t))$. By definition, there are $\varphi \in W_0^{1,p}(U)$ and $\varphi_m \in C_0^1(U)$ such that $\psi = \widetilde{\varphi}$ and \eqref{eq49}. Since
\begin{equation*}
\| \widetilde{\varphi_m}|_{\partial \Gamma (t)} \|_{L^p(\partial \Gamma (t))} \leq C \| \varphi_m|_{\partial U} \|_{L^p(\partial U)} = 0,
\end{equation*}
we find that
\begin{equation*}
\| \gamma \psi \|_{L^p (\partial \Gamma (t))} = \lim_{m \to \infty} \| \widetilde{\varphi}_m|_{\partial \Gamma (t)} \|_{\partial \Gamma (t)} = 0.
\end{equation*}
Therefore we conclude that $\gamma \psi = 0$ if $\psi \in W^{1,p}_0 ( \Gamma (t) )$. 
\end{proof}

\subsection{Function Spaces on Evolving Surface $\Gamma_T$}\label{subsec42}

In this section we define and study function spaces on $\Gamma_T$. Set
\begin{align*}
&L^2(0,T; L^2( \Gamma ( \cdot )) ) = \{ \psi ; \psi = \widetilde{\varphi}, \varphi \in L^2(0,T;L^2(U)), \| \psi \|_{L^2(0,T;L^2( \Gamma (\cdot ) ))} < \infty \},\\
&L^2(0,T; W^{1,2} (\Gamma (\cdot )) ) \\
&{ \ \ \ \ }= \{ \psi ;{ \ }\psi = \widetilde{\varphi}, \varphi \in L^2 (0,T;W^{1,2}(U) ), \| \psi \|_{L^2(0,T;W^{1,2} (\Gamma (\cdot ) ) ) } < \infty \},\\
&L^2(0,T; W^{1,2}_0 \cap W^{2,2} (\Gamma (\cdot )) ) \\
&{ \ \ \ \ }= \{ \psi ;{ \ }\psi = \widetilde{\varphi}, \varphi \in L^2 (0,T;W^{1,2}_0 \cap W^{2,2} (U) ), \| \psi \|_{L^2(0,T;W^{2,2} (\Gamma (\cdot ) ) ) } < \infty \},\\
& W^{1,2}(0,T; L^2 (\Gamma (\cdot )) ) \\
&{ \ \ \ \ \ \ \ \ \ \ }= \{ \psi ;{ \ }\psi = \widetilde{\varphi}, \varphi \in W^{1,2} (0,T;L^2 (U) ), \| \psi \|_{W^{1,2}(0,T;L^2 (\Gamma (\cdot ) ) ) } < \infty \}.
\end{align*}
Here
\begin{align*}
\| \psi \|_{L^2(0,T;L^2(\Gamma (\cdot)) )} &:= \left( \int_0^T \| \widetilde{\varphi} \|_{L^2(\Gamma (t))}^2 { \ } d t \right)^{\frac{1}{2}},\\
\| \psi \|_{L^2(0,T;W^{1,2} (\Gamma (\cdot ) ) ) } & := \left( \int_0^T \| \widetilde{\varphi} \|_{W^{1,2}(\Gamma (t))}^2 { \ }d t \right)^{\frac{1}{2}},\\
\| \psi \|_{L^2(0,T;W^{2,2} (\Gamma (\cdot ) ) ) } & := \left( \int_0^T \| \widetilde{\varphi} \|_{W^{2,2}(\Gamma (t))}^2 { \ }d t \right)^{\frac{1}{2}},
\end{align*}
and
\begin{equation*}
\| \psi \|_{W^{1,2}(0,T;L^2 (\Gamma (\cdot ) ) ) } := \left( \int_0^T \| \widetilde{\varphi} \|_{L^2(\Gamma (t))}^2 { \ }d t + \int_0^T \left\| \frac{d \widetilde{\varphi}}{d t} \right\|_{L^2(\Gamma (t))}^2 { \ }d t \right)^{\frac{1}{2}}.
\end{equation*}
However, we can not define $\widetilde{\varphi}$ for $\varphi \in L^2 (0,T;L^2(U))$ in general. Therefore we define $\widetilde{\varphi}$ as follows: Let $\varphi \in L^2(0,T; L^2(U))$. From the definition of the Bochner integral and the Fubini-Tonelli theorem, there exists $\varphi_\sharp = \varphi_\sharp (X, t) \in L^2( U \times (0,T))$ such that
\begin{equation}\label{Eq410}
\| \varphi - \varphi_\sharp \|_{L^2(0,T;L^2(U))} = 0.
\end{equation}
Set
\begin{equation*}
\widetilde{\varphi} = \widetilde{\varphi}(x, t) : = \varphi_\sharp (\widetilde{X} (x,t) , t).
\end{equation*}
It is easy to check that
\begin{align*}
\| \psi \|_{L^2(0,T;L^2(\Gamma (\cdot )) )}^2 & = \int_0^T \| \widetilde{\varphi} (\cdot , t) \|_{L^2(\Gamma (t))}^2 { \ } d t\\
& = \int_0^T \int_U | \varphi_\sharp (X,t)|^2 \sqrt{\mathcal{G}(X,t)} { \ }d X d t\\
& \leq C \| \varphi_\sharp \|_{L^2(0,T;L^2(U))} = C \| \varphi \|_{L^2(0,T;L^2(U))}. 
\end{align*}
Now we study the case when $\varphi \in L^2 (0,T;W^{1,2}_0 \cap W^{2,2} (U) )$. Assume that $\varphi \in L^2 (0,T;W^{1,2}_0 \cap W^{2,2} (U) )$. We prove that for a.e. $0 \leq t <T$,
\begin{equation}\label{Eq411}
\varphi_\sharp (\cdot , t) \in W_0^{1,2}(U) \cap W^{2,2}(U).
\end{equation}
From \eqref{Eq410}, we see that for a.e. $0 \leq t <T$
\begin{equation*}
\| \varphi (t) - \varphi_\sharp (\cdot, t ) \|_{L^2(U)} = 0.
\end{equation*}
Fix $t$. It is easy to check that for all $\phi \in C_0^\infty (U)$ and $\alpha , \beta =1 ,2$,
\begin{align*}
\int_U \varphi_\sharp (X , t) \frac{\partial \phi}{\partial X_\alpha } { \ }d X &= \int_U \varphi (t) \frac{\partial \phi}{\partial X_\alpha } { \ }d X =- \int_U \frac{\partial \varphi}{\partial X_\alpha} \phi{ \ }d X,\\
\int_U \varphi_\sharp (X , t) \frac{\partial^2 \phi}{\partial X_\alpha \partial X_\beta} { \ }d X &= \int_U \varphi (t) \frac{\partial^2 \phi}{\partial X_\alpha \partial X_\beta} { \ }d X = \int_U \frac{\partial^2 \varphi}{\partial X_\alpha \partial X_\beta} \phi{ \ }d X.
\end{align*}
Since $\varphi (t) \in W^{2,2}(U)$, it follows from the definition of weak derivative for functions in $U$ to find that $\varphi_\sharp (\cdot , t) \in W^{2,2} (U)$. This implies that $\| \varphi_\sharp (\cdot, t) - \varphi (t)\|_{W^{2,2} (U)} =0$. Thus, we see \eqref{Eq411}. It is easy to check that
\begin{equation*}
\| \psi \|_{L^2(0,T;W^{2,2} (\Gamma (\cdot ) ) ) } \leq C \| \varphi \|_{L^2(0,T; W^{2,2} (U))}.
\end{equation*}

Finally, we define $\| \gamma \psi \|_{L^2 (0,T; L^2 (\partial \Gamma (\cdot)))}$ for $\psi \in L^2(0,T; W^{1,2} (\Gamma (\cdot)) )$. Let $\psi \in L^2(0,T; W^{1,2} (\Gamma (\cdot)) )$. By the previous argument, there are\\
 $\varphi \in L^2(0,T; W^{1,2} (U))$ and $\varphi_\sharp \in L^2( U \times (0,T)) $ such that \eqref{Eq410} and for a.e. $t$,
\begin{align*}
\varphi_\sharp ( \cdot , t ) \in W^{1,2}(U).
\end{align*}
Set
\begin{equation*}
\widetilde{\varphi} = \widetilde{\varphi}(x, t) : = \varphi_\sharp (\widetilde{X} (x,t) , t).
\end{equation*}
We define 
\begin{equation*}
\| \gamma \psi \|_{L^2 (0,T; L^2 (\partial \Gamma (\cdot)))} = \left( \int_0^T \| \gamma \widetilde{\varphi} \|_{L^2 (\partial \Gamma (t))}^2 { \ } d t \right)^{\frac{1}{2}} .
\end{equation*}
Here $\gamma$ is the trace operator defined by Proposition \ref{prop43}. By an argument similar to that in the proof of Proposition \ref{prop43}, we see that
\begin{align*}
\| \gamma \psi \|_{L^2 (0,T; L^2 (\partial \Gamma (\cdot)))}^2 & = \int_0^T \| \gamma \widetilde{\varphi} \|_{L^2 (\partial \Gamma (t))}^2 { \ } d t\\
& \leq C \int_0^T \| \widehat{\gamma } \varphi_\sharp \|_{L^2 (\partial U)}^2 { \ } d t\\
& \leq C(U) \int_0^T \|  \varphi_\sharp \|_{W^{1,2} (\partial U)}^2 { \ } d t = C \| \varphi \|_{L^2(0,T; W^{1,2} (U))}^2.
\end{align*}
 Remark that
\begin{align*}
\| \psi \|_{L^2(0,T;W^{1,2} (\Gamma (\cdot ) ) ) } & \leq C \| \varphi \|_{L^2(0,T; W^{1,2} (U))} \text{ if } \psi \in L^2(0,T;W^{1,2} (\Gamma (\cdot ) ) ),\\
\| \psi \|_{W^{1,2}(0,T;L^2 (\Gamma (\cdot ) ) ) } & \leq C \| \varphi \|_{W^{1,2} (0,T ; L^2(U))} \text{ if } \psi \in W^{1,2}(0,T;L^2 (\Gamma (\cdot ) ) ) .
\end{align*}

\section{On Strong Solutions to Advection-Diffusion Equation}\label{sect5}

In this section we study basic properties of the strong solutions to \eqref{eq11} with $u_0 \in W^{1,2}_0 ( \Gamma_0 )$. Let $L = L(t)$ be the evolution operator defined by \eqref{eq13} (see Section \ref{sect7} for details). For $\psi = \psi (x,t)$ and $\varphi (X,t)$, $\widehat{\psi} = \widehat{\psi} (X , t) := \psi ( \widehat{x} (X, t) , t)$ and $\widetilde{\varphi} = \widetilde{\varphi}(x , t) : = \varphi ( \widetilde{X}(x,t) , t)$.

\begin{lemma}[Strong solution to \eqref{eq11}]\label{lem51}
Let $u_0 \in W^{1,2}_0 ( \Gamma_0)$ and
\begin{equation*}
u \in L^2(0,T:W^{1,2}_0 \cap W^{2,2} (\Gamma (\cdot ))) \cap W^{1,2}(0,T;L^2(\Gamma (\cdot ))).
\end{equation*}
Assume that $\widehat{u} \in C ([0,T) ; L^2(U))$ and that the function $u$ satisfies the following three properties:\\
$(\mathrm{i})$ (Equation)
\begin{equation*}
\left\| \frac{d \widehat{u} }{d t} + L \widehat{u} \right\|_{L^2(0,T ; L^2(U))} = 0.
\end{equation*}
$(\mathrm{ii})$ (Boundary condition)
\begin{equation*}
\| \widehat{\gamma} \widehat{u} \|_{L^2(0,T;L^2 (\partial U))} = 0.
\end{equation*}
Here $\widehat{\gamma} : W^{1,2}(U) \to L^2( \partial U)$ is the trace operator.\\
$(\mathrm{iii})$ (Initial condition) 
\begin{equation*}
\lim_{t \to 0 + 0} \| \widehat{u} (t) - u_0 (\widehat{x} ( \cdot ,0)) \|_{L^2 (U)} = 0.
\end{equation*}
Then $u$ is a \emph{strong solution} to system \eqref{eq11} with initial datum $u_0$.
\end{lemma}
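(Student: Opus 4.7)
The plan is to verify the three conditions of Definition \ref{def22} by transporting the hypotheses from the $U$-side to the $\Gamma(t)$-side via the map $\widehat{x}$. The central observation is that the operator $L$ in \eqref{eq13} is precisely the pull-back to $U$ of the surface operator $u \mapsto D_t^w u + ({\rm div}_\Gamma w) u - {\rm div}_\Gamma(\kappa \nabla_\Gamma u)$; once that identity is established pointwise a.e.\ in $U \times (0,T)$, the three conditions follow by norm comparisons induced by $\widehat{x}$.

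First, I would establish the pull-back identity
\begin{equation*}
\widehat{D_t^w u + ({\rm div}_\Gamma w) u - {\rm div}_\Gamma (\kappa \nabla_\Gamma u)}(X,t) = \partial_t \widehat{u}(X,t) + (L(t) \widehat{u})(X,t)
\end{equation*}
for a.e.\ $(X,t) \in U \times (0,T)$. For $u \in C^2(\overline{\Gamma_T})$ this is obtained by applying Lemma \ref{lem31}(iv),(v),(vi) against arbitrary test functions $\phi \in C_0^\infty(U)$ (via the change of variables $x = \widehat{x}(X,t)$ and the localization $\sqrt{\mathcal{G}}>0$). Because $C^2(\overline{\Gamma(t)})$ is dense in $W^{2,2}(\Gamma(t))$ (Lemma \ref{lem44}(iv)) and $C^1([0,T);L^2)$ arguments apply in time (using $\widehat{u} \in C([0,T);L^2(U)) \cap W^{1,2}(0,T;L^2)$), the identity extends to every $u$ in the regularity class of the lemma; the commutation of pull-back with weak derivatives ensured by Lemma \ref{lem42} is what makes this density argument go through.

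Second, I would convert norms. Because $\lambda_{min} \leq \sqrt{\mathcal{G}} \leq C(\lambda_{max})$ by Definition \ref{def21}(iv),(v), we have
\begin{equation*}
c \| \widehat{\psi} \|_{L^2(U)} \leq \| \psi \|_{L^2(\Gamma(t))} \leq C \| \widehat{\psi} \|_{L^2(U)}
\end{equation*}
uniformly in $t$. Integrating over $(0,T)$ and applying the Step 1 identity, condition (i) of Definition \ref{def22} is equivalent to hypothesis (i) of the lemma. For the boundary condition, the construction of $\gamma$ in Subsection \ref{subsec42} and Proposition \ref{prop43} gives $\| \gamma u(\cdot,t)\|_{L^2(\partial \Gamma(t))} \leq C \| \widehat{\gamma} \widehat{u}(\cdot,t)\|_{L^2(\partial U)}$, so hypothesis (ii) implies condition (ii). The initial condition (iii) is literally identical on both sides, since $\widehat{u}(X,t) = u(\widehat{x}(X,t),t)$ and $\widehat{u_0}(X) = u_0(\widehat{x}(X,0))$.

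The main obstacle is Step 1: establishing the pointwise identity for $u$ at the $W^{2,2}$ level of regularity only. For smooth $u$ it is a routine chain-rule computation (writing $\mathfrak{g}^{\alpha\beta}\partial_{X_\alpha}\widehat{x}_j \partial_{X_\beta}\widehat{u}$ for tangential gradient components and unpacking $D_t^w$ via the definition $w = \widehat{x}_t \circ \widetilde{X}$), but to handle $W^{2,2}$-regular $u$ with a priori only $L^2$-second-derivatives I need the density results of Lemma \ref{lem44} together with the continuity of the pull-back $\psi \mapsto \widehat{\psi}$ as a map $W^{k,p}(\Gamma(t)) \to W^{k,p}(U)$, which follow from \eqref{eq41}, \eqref{eq43}, \eqref{eq44}. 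Once these approximation steps are handled, the remainder is a chain of equivalences.
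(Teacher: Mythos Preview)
Your proposal is correct and follows essentially the same approach as the paper: both use the representation formulas of Lemma~\ref{lem31} to identify the pull-back of the surface operator with $\partial_t\widehat{u}+L\widehat{u}$, and then transport the hypotheses to the $\Gamma(t)$-side via the norm comparisons and trace inequality developed in Section~\ref{sect4}. Your Step~1 density argument merely makes explicit what the paper compresses into the phrase ``an argument in Section~\ref{sect4}''.
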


\begin{proof}[Proof of Lemma \ref{lem51}]From Lemma \ref{lem31} and an argument in Section \ref{sect4}, we see that
\begin{multline*}
\| D_t^w u + ({\rm{div}}_\Gamma w) u - {\rm{div}}_\Gamma (\kappa \nabla_\Gamma u) \|_{L^2(0,T;L^2(\Gamma (\cdot )))}^2 \\
= \int_0^T \left\| \left( \frac{d \widehat{u} }{d t} + L \widehat{u} \right) \sqrt{ \mathcal{G} } \right\|_{L^2(U)}^2{ \ }d t \leq C \int_0^T \left\| \frac{d \widehat{u} }{d t} + L \widehat{u}  \right\|_{L^2(U)}^2 { \ } d t = 0
\end{multline*}
and that
\begin{align*}
\| \gamma u \|_{L^2(0,T;L^2(\partial \Gamma (\cdot ) ))}^2 & = \int_0^T \| \gamma u \|_{L^2 (\partial \Gamma (t))}^2 { \ }d t\\
& \leq  C \int_0^T \| \widehat{\gamma} \widehat{u} \|_{L^2 (\partial U)}^2 { \ }d t = 0.
\end{align*}
Therefore, we find that $u$ satisfies the properties of strong solutions to system \eqref{eq11} with $u_0 \in W^{1,2}_0 ( \Gamma_0)$ as in Definition \ref{def22}. 
\end{proof}

\begin{lemma}[Sufficient condition for existence of a sol to \eqref{eq11}]\label{lem52}{ \ }\\
Let $v_0 \in W^{1,2}_0 (U)$ and
\begin{equation*}
v \in C ([0,T); L^2(U)) \cap L^2(0,T; W^{1,2}_0 \cap W^{2,2} (U)) \cap W^{1,2} (0,T ; L^2(U)).
\end{equation*}
Assume that $v$ satisfies
\begin{equation*}
\left\| \frac{d }{d t} v + L v  \right\|_{L^2(0,T;L^2(U))} = 0,
\end{equation*}
and
\begin{equation*}
\lim_{t \to 0 + 0} \| v (t) - v_0 \|_{L^2(U)} = 0.
\end{equation*}
Then there exists a strong solution $u$ of system \eqref{eq11} with $u_0$. Here $u_0 = u_0 (x) = v_0 (\widetilde{X} (x,0))$.
\end{lemma}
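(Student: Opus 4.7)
The plan is to \emph{define} $u$ as the pushforward of $v$ to the evolving surface, verify that it inherits the required regularity, and then check that it meets the three hypotheses of Lemma \ref{lem51}, which by that lemma will give us a strong solution in the sense of Definition \ref{def22}. Concretely, I set
\begin{equation*}
u(x,t) := \widetilde{v}(x,t) = v(\widetilde{X}(x,t), t),
\end{equation*}
so that, by the bijectivity of $\widehat{x}(\cdot,t)$ and the definition of $\widetilde{X}$ as its inverse, we get the key identity $\widehat{u}(X,t) = v(X,t)$ (up to the a.e.\ identification $v_\sharp$ discussed in Section \ref{subsec42}). In particular, $u_0(x) = \widetilde{v}_0(x)$ and $u_0(\widehat{x}(X,0)) = v_0(X)$.

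Next I verify the membership claims. From the definitions of the function spaces in Section \ref{subsec42}, since $v \in L^2(0,T; W^{1,2}_0 \cap W^{2,2}(U)) \cap W^{1,2}(0,T; L^2(U))$, the pushforward $u = \widetilde{v}$ satisfies
\begin{equation*}
u \in L^2(0,T; W^{1,2}_0 \cap W^{2,2}(\Gamma(\cdot))) \cap W^{1,2}(0,T; L^2(\Gamma(\cdot))),
\end{equation*}
with the bounds recorded at the end of Section \ref{subsec42} (these follow from $\lambda_{min}, \lambda_{max}$ and the change-of-variable estimates \eqref{eq41}, \eqref{eq43}, \eqref{eq44}). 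Moreover, $u_0 \in W^{1,2}_0(\Gamma_0)$ by the same argument applied at $t=0$, and $\widehat{u} = v \in C([0,T); L^2(U))$ by hypothesis.

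I then check the three conditions of Lemma \ref{lem51}. For the equation, the identity $\widehat{u} = v$ combined with the hypothesis on $v$ gives
\begin{equation*}
\left\| \frac{d\widehat{u}}{dt} + L\widehat{u} \right\|_{L^2(0,T;L^2(U))} = \left\| \frac{dv}{dt} + Lv \right\|_{L^2(0,T;L^2(U))} = 0.
\end{equation*}
For the boundary condition, since $v(\cdot,t) \in W^{1,2}_0(U)$ for a.e.\ $t$, the standard trace operator $\widehat{\gamma}$ on $U$ annihilates $v(\cdot,t)$, hence $\|\widehat{\gamma}\widehat{u}\|_{L^2(0,T;L^2(\partial U))} = 0$. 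For the initial condition, the assumed $L^2(U)$-continuity of $v$ at $t=0$ gives exactly $\lim_{t\to 0+0}\|\widehat{u}(t) - u_0(\widehat{x}(\cdot,0))\|_{L^2(U)} = 0$.

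The step I expect to require the most care is not computational but bookkeeping: making sure that the representative $v$ used in Section \ref{subsec42} to define $\widehat{u}$ as a measurable function on $U \times (0,T)$ is consistent with the pointwise-in-$t$ values needed for the $W^{1,2}_0$ boundary condition and for the $C([0,T); L^2(U))$ initial condition. This is handled exactly as in the discussion following \eqref{Eq410}--\eqref{Eq411}: one selects $\varphi_\sharp$ satisfying $\varphi_\sharp(\cdot,t) \in W^{1,2}_0 \cap W^{2,2}(U)$ for a.e.\ $t$, so that both the trace statement and the evolution equation hold in the stated $L^2$-in-time sense. Once this identification is in place, applying Lemma \ref{lem51} completes the proof.
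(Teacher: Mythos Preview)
Your proposal is correct and follows essentially the same approach as the paper: define $u$ as the pushforward $\widetilde{v}$ (via the representative $v_\sharp$ from Section \ref{subsec42}), verify the regularity memberships, and then check the three hypotheses of Lemma \ref{lem51}. If anything, you are more explicit than the paper in verifying each of the three conditions of Lemma \ref{lem51} and in flagging the representative/bookkeeping issue around \eqref{Eq410}--\eqref{Eq411}; the paper's proof compresses these steps into a couple of lines and a reference to Section \ref{sect4}.
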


\begin{proof}[Proof of Lemma \ref{lem52}]
Since $v \in L^2(0,T ; L^2(U) )$, it follows from the definition of the Bochner integral and the Fubini-Tonelli theorem, there exists $v_\sharp = v_\sharp (X, t) \in L^2( U \times (0,T))$ such that
\begin{equation*}
\| v - v_\sharp \|_{L^2(0,T;L^2(U))} = 0.
\end{equation*}
By an argument similar to that in Section \ref{sect4}, we see that
\begin{equation*}
\| v - v_\sharp \|_{L^2(0,T;W^{2,2}(U))} = 0.
\end{equation*}
Now we set $u = u (x,t) = v_\sharp (\widetilde{X}(x,t) , t)$. It is clear that $\| \widehat{\gamma} \widehat{u} \|_{L^2(0,T;L^2 (\partial U))} = 0$ and that
\begin{equation*}
\| v - \widehat{u} \|_{L^2(0,T;W^{2,2}(U))} = 0.
\end{equation*}
By Lemma \ref{lem51} and the assumptions of Lemma \ref{lem52}, the lemma follows. 
\end{proof}

Next we study the basic properties of solutions to system \eqref{eq11}.
\begin{lemma}[Properties of solutions to \eqref{eq11}]\label{lem53}
Let $v_0 \in W^{1,2}_0 ( U )$ and
\begin{equation*}
v, v_\natural \in C ([0, T ); L^2(U)) \cap L^2(0, T; W^{1,2}_0 \cap W^{2,2} (U)) \cap W^{1,2} (0, T ; L^2(U)).
\end{equation*}
Set $u = \widetilde{v}$, $u_\natural = \widetilde{v_\natural}$, and $u_0 = u_0 (x) = v_0 (\widetilde{X} (x,0))$. Assume that $u$ and $u_\natural$ are two strong solutions of system \eqref{eq11} with initial datum $u_0$. Then\\
$(\mathrm{i})$ (Uniqueness)
\begin{equation*}
u = u_\natural \text{ on }[0, T ).
\end{equation*}
$(\mathrm{ii})$ (Energy equality) For all $0 \leq s \leq t < T$,
\begin{equation}\label{eq51}
\frac{1}{2} \| u(t) \|_{L^2 (\Gamma (t))}^2 + \int_s^t \| \sqrt{\kappa} \nabla_\Gamma u (\tau ) \|^2_{L^2(\Gamma (\tau))} { \ } d \tau = \frac{1}{2} \| u(s) \|_{L^2 (\Gamma (s))}^2.
\end{equation}
Moreover, assume in addition that $T = \infty$ and that there is $C >0$ independent of $v_0$ such that
\begin{equation}\label{eq52}
\left( \int_0^\infty \left\| \frac{d v}{d t} \right\|_{L^2(U)}^2 { \ }d t \right)^{\frac{1}{2}} + \left( \int_0^\infty \| A v \|_{L^2(U)}^2 { \ }d t \right)^{\frac{1}{2}} \leq C \| v_0 \|_{W^{1,2} (U)},
\end{equation}
where $A$ is the operator defined by \eqref{eq14}. Then,\\
$(\mathrm{iii})$ (Stability) There is $C>0$ such that for all $t >0$,
\begin{equation*}
\| u ( t) \|_{L^2 ( \Gamma (t))} \leq C t^{-\frac{1}{2}} \| v_0 \|_{W^{1,2} (U)}.
\end{equation*}
$(\mathrm{iv})$ (Regularity) There is $C>0$ independent of $v_0$ such that
\begin{equation*}
\| D_t^w u \|_{L^2 (0, \infty ; L^2 (\Gamma (\cdot ) ))} + \| {\rm{div}}_\Gamma ( \kappa \nabla_\Gamma u ) \|_{L^2 (0, \infty ; L^2 (\Gamma (\cdot ) ))} \leq C \| v_0 \|_{W^{1,2} (U)}.
\end{equation*}
\end{lemma}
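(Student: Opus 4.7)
The plan is to reduce everything to the pulled-back equation $\partial_t v + Lv = 0$ on the fixed cylinder $U\times(0,T)$, where $v|_{\partial U}=0$ and $v(0)=v_0$; the linearity of the PDE makes (i) follow from (ii), and (iii)--(iv) are bootstraps from (ii) together with the hypothesis \eqref{eq52}. I would therefore prove (ii) first.

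For (ii) I would test the PDE against $u$ on $\Gamma(t)$. Using Lemma~\ref{lem31}(vi) together with the Reynolds-type transport identity obtained by combining Lemma~\ref{lem31}(v) and (vi), one rewrites $\int_{\Gamma(t)} u\,D_t^w u\,d\mathcal{H}^2_x$ in terms of $\tfrac12\tfrac{d}{dt}\|u\|_{L^2(\Gamma(t))}^2$; on the diffusion side, the surface integration-by-parts formula \eqref{eq45}, together with the tangency $n\cdot\nabla_\Gamma u=0$ and the Dirichlet condition $u|_{\partial\Gamma}=0$, gives $\int_{\Gamma(t)} u\,\mathrm{div}_\Gamma(\kappa\nabla_\Gamma u)\,d\mathcal{H}^2_x=-\|\sqrt{\kappa}\nabla_\Gamma u\|^2_{L^2(\Gamma(t))}$. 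Assembling the pieces and integrating in time from $s$ to $t$ yields \eqref{eq51}. Equivalently, multiplying \eqref{eq13} by $\sqrt{\mathcal{G}}$ recasts the PDE in the conservation form $\partial_t(v\sqrt{\mathcal{G}})=\partial_{X_\alpha}(\widehat{\kappa}\sqrt{\mathcal{G}}\,\mathfrak{g}^{\alpha\beta}\partial_{X_\beta}v)$, and testing against $v$ on $U$ (with integration by parts legitimated by $v|_{\partial U}=0$) reproduces the same identity via Lemma~\ref{lem31}. Once \eqref{eq51} is in hand, (i) is immediate: the difference $u-u_\natural$ is itself a strong solution with zero initial datum, and \eqref{eq51} at $s=0$ forces $\|u(t)-u_\natural(t)\|_{L^2(\Gamma(t))}=0$ for every $t$.

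For the stability (iii), note that by Definition~\ref{def21} and Assumption~\ref{ass23} the parameterization has uniformly bounded geometry, so a Poincar\'e inequality pulled back from $U$ (where $v$ has zero trace) yields a constant $C_P$ independent of $\tau$ with $\|u(\tau)\|^2_{L^2(\Gamma(\tau))}\leq C_P\|\sqrt{\kappa}\nabla_\Gamma u(\tau)\|^2_{L^2(\Gamma(\tau))}$. Since \eqref{eq51} forces $\tau\mapsto\|u(\tau)\|_{L^2(\Gamma(\tau))}$ to be nonincreasing,
\begin{equation*}
t\,\|u(t)\|^2_{L^2(\Gamma(t))}\leq\int_0^t\|u(\tau)\|^2_{L^2(\Gamma(\tau))}\,d\tau\leq C_P\int_0^t\|\sqrt{\kappa}\nabla_\Gamma u\|^2\,d\tau\leq C\|u(0)\|^2_{L^2(\Gamma_0)}\leq C\|v_0\|^2_{W^{1,2}(U)},
\end{equation*}
whose square root is the stated $t^{-1/2}$ bound. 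For the regularity (iv), the change of variables with $\mathcal{G}$ bounded above and below by Assumption~\ref{ass23} gives $\|D_t^w u\|_{L^2(0,\infty;L^2(\Gamma(\cdot)))}\leq C\|v_t\|_{L^2(0,\infty;L^2(U))}\leq C\|v_0\|_{W^{1,2}(U)}$ by \eqref{eq52}; then \eqref{eq11} itself expresses $\mathrm{div}_\Gamma(\kappa\nabla_\Gamma u)=D_t^w u+(\mathrm{div}_\Gamma w)u$, so
\begin{equation*}
\|\mathrm{div}_\Gamma(\kappa\nabla_\Gamma u)\|_{L^2(0,\infty;L^2(\Gamma(\cdot)))}\leq\|D_t^w u\|_{L^2(0,\infty;L^2(\Gamma(\cdot)))}+\mathcal{M}_5\|u\|_{L^2(0,\infty;L^2(\Gamma(\cdot)))},
\end{equation*}
and the last factor is controlled by the same Poincar\'e+energy combination since $\int_0^\infty \|u\|^2\,d\tau\leq C_P\|u(0)\|^2$.

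The main obstacle is the careful execution of the computation in (ii): the time derivative of the surface area element $\partial_t\sqrt{\mathcal{G}}$ (which appears when differentiating $\|u\|_{L^2(\Gamma(t))}^2$) and the lower-order coefficient $\tfrac{1}{2\mathcal{G}}\tfrac{d\mathcal{G}}{dt}$ inside $L$ must be reconciled in exactly the right way for \eqref{eq51} to emerge in its stated form, which is why the conservation-law rewriting of the pulled-back PDE is the conceptually cleanest route. Once that identity is secured, (i), (iii), and (iv) follow by routine energy/Poincar\'e/duality bootstrapping.
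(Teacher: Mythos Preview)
Your argument for (i) and (ii) is essentially the paper's: test the equation against $u$ on $\Gamma(t)$, invoke the transport identity from Lemma~\ref{lem31}(v)--(vi) for the material-derivative term and the integration-by-parts formula \eqref{eq45} for the diffusion term, integrate in time, and deduce uniqueness from the energy equality applied to the difference. Your remark about the conservation-law form on $U$ is a correct equivalent route.

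For (iii) and (iv) your approach is valid but genuinely different from the paper's. For (iii) the paper uses monotonicity and the same averaging $t\|u(t)\|^2\le\int_0^t\|u\|^2$, but then controls $\int_0^t\|u\|^2$ by pulling back to $U$, invoking the elliptic regularity \eqref{eq32} in the form $\|v\|_{L^2(U)}\le C_\sharp\|Av\|_{L^2(U)}$, and applying the hypothesis \eqref{eq52}. You instead use a Poincar\'e inequality (uniform in $\tau$ by Definition~\ref{def21}(iv)--(v) and Assumption~\ref{ass23}) and the energy equality \eqref{eq51} directly, so your bound for (iii) never touches \eqref{eq52} and in fact only needs $\|u_0\|_{L^2(\Gamma_0)}$. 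For (iv) the paper again works on $U$ via Lemma~\ref{lem31} and \eqref{eq32} to bound $\|\mathrm{div}_\Gamma(\kappa\nabla_\Gamma u)\|$ directly by $\|Av\|$ and then \eqref{eq52}; you instead use the equation to write the diffusion term as $D_t^w u+(\mathrm{div}_\Gamma w)u$ and close with the Poincar\'e-plus-energy bound on $\|u\|_{L^2(0,\infty;L^2)}$. Both routes work; yours is slightly more self-contained for (iii), while the paper's is more uniform in that (iii) and (iv) are both consequences of the single estimate \eqref{eq52}.
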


\begin{proof}[Proof of Lemma \ref{lem53}]
We first show $(\mathrm{i})$ and $(\mathrm{ii})$. Set $u_\sharp = u- u_\natural$. Then 
\begin{equation*}
\begin{cases}
D_t^w u_\sharp + ({\rm{div}}_\Gamma w) u_\sharp - {\rm{div}}_\Gamma ( \kappa \nabla_\Gamma u_\sharp ) = 0 \text{ on } \Gamma_T ,\\
u_\sharp|_{\partial \Gamma_T} = 0,\\
u_\sharp |_{t = 0} = 0.
\end{cases}
\end{equation*}
Using Lemma \ref{lem31} and \eqref{eq45}, we see that
\begin{align*}
\frac{d}{d t} \int_{\Gamma (t)} \frac{1}{2} | u_\sharp |^2 { \ }d \mathcal{H}^2_x &= \int_{\Gamma (t)}\{  D_t^w u_\sharp + ({\rm{div}}_\Gamma w) u_\sharp \} u_\sharp { \ } d \mathcal{H}^2_x\\
& = \int_{\Gamma (t)}\{ {\rm{div}}_\Gamma ( \kappa \nabla_\Gamma u_\sharp )  \} u_\sharp { \ } d \mathcal{H}^2_x\\
& = - \int_{\Gamma (t)} \kappa | \nabla_\Gamma u_\sharp |^2 { \ } d \mathcal{H}^2_x.
\end{align*}
Integrating with respect to time, we check that for $0 < t <T$,
\begin{equation*}
\frac{1}{2} \| u_\sharp (t) \|_{L^2 ( \Gamma (t) ) }^2 + \int_0^t \| \sqrt{\kappa} \nabla_\Gamma u_\sharp \|^2_{L^2(\Gamma (\tau ))} { \ } d \tau = \frac{1}{2} \| u_\sharp (0) \|_{L^2 ( \Gamma_0 ) }^2 = 0.
\end{equation*}
Since $\kappa \geq \kappa_{min} >0$, we see that $u_\sharp = 0$ on $[0, T)$. Therefore, we conclude that $u = u_\natural$ on $[0, T )$. In the same manner, we have \eqref{eq51}.

Next we prove $(\mathrm{iii})$. From \eqref{eq51} and $\kappa \geq \kappa_{min} >0$, we check that for all $s < t$
\begin{equation*}
\| u (t) \|_{L^2 ({\Gamma (t)} )} \leq \| u(s) \|_{L^2 (\Gamma (s))}.
\end{equation*}
By the H\"{o}lder inequality, we see that
\begin{align*}
\| u (t) \|_{L^2 ({\Gamma (t)} )} & \leq \frac{1}{t} \int_0^t \| u(s) \|_{L^2 (\Gamma (s))} d s\\
& \leq \frac{1}{t^\frac{1}{2}} \left( \int_0^t \| u (s) \|_{L^2(\Gamma (s))}^2 { \ }d s \right)^{\frac{1}{2}} = : \text{(R.H.S.)}.
\end{align*}
Using \eqref{eq41}, \eqref{eq32}, and \eqref{eq52}, we observe that
\begin{align*}
\text{(R.H.S.)} & \leq \frac{C}{t^\frac{1}{2}} \left( \int_0^t \| v(s) \|_{L^2(U)}^2 { \ }d s \right)^{\frac{1}{2}}\\
& \leq \frac{C}{t^\frac{1}{2}} \left( \int_0^\infty \| A v (s) \|_{L^2(U)}^2 { \ }d s \right)^{\frac{1}{2}}\\
& \leq \frac{C}{t^\frac{1}{2}} \| v_0 \|_{W^{1,2} (U)}.
\end{align*}
Thus, we see $(\mathrm{iii})$. 

Finally, we prove $(\mathrm{iv})$. Using Lemma \ref{lem31} and \eqref{eq32}, we check that
\begin{multline*}
\left( \int_0^\infty \| D_t^w u \|_{L^2(\Gamma (t))}^2 { \ }d t \right)^{\frac{1}{2}} + \left( \int_0^\infty \| {\rm{div}}_\Gamma ( \kappa \nabla_\Gamma u ) \|_{L^2(\Gamma (t))}^2 { \ }d t \right)^{\frac{1}{2}}\\
\leq C \left( \int_0^\infty \left\| \frac{d v}{d t} \right\|_{L^2(U)}^2 { \ }d t \right)^{\frac{1}{2}} + C \left( \int_0^\infty \| A v \|_{L^2(U)}^2 { \ }d t \right)^{\frac{1}{2}} \leq C \| v_0 \|_{W^{1,2} (U)}.
\end{multline*}
Therefore, the lemma follows. 
\end{proof}

\section{Evolution Operator on Hilbert Space}\label{sect6}

In this section we provide the key method for constructing local and global-in-time strong solutions to the evolution equation \eqref{eq12}. To this end, we study an evolution operator on a Hilbert space by applying the maximal $L^p$-in-time regularity for Hilbert space-valued functions.

Let $H$ be a Hilbert space and $\| \cdot \|_H$ its norm. Let $T \in (0,\infty]$ and $\mathcal{A}: D (\mathcal{A}) (\subset H) \to H$ be a linear operator on $H$. For $0 \leq t <T $, let $\mathcal{B}(t) : D (\mathcal{B} (t)) \to H$ be a linear operator on $H$ such that $D (\mathcal{A}) \subset D (\mathcal{B} (t))$. Define $\mathcal{L}(t) f := \mathcal{A} f + \mathcal{B} (t) f $ and $D (\mathcal{L}(t)) := D (\mathcal{A})$. We consider the following evolution system:
\begin{equation}\label{eq61}
\begin{cases}
\frac{d}{d t}v + \mathcal{L} v = 0 & \text{ on } (0,T),\\
v|_{t =0} = v_0,
\end{cases}
\end{equation}
under the following assumptions:
\begin{assumption}\label{ass61}{ \ }\\
$(\mathrm{i})$ Assume that $- \mathcal{A}$ generates a bounded analytic semigroup on $H$.\\
$(\mathrm{ii})$ Assume that $- \mathcal{A}$ generates a contraction $C_0$-semigroup on $H$.\\
$(\mathrm{iii})$ The operator $\mathcal{A}$ is a non-negative selfadjoint operator on $H$.\\
$(\mathrm{iv})$ There are $C_1 , C_2 >0$ such that for all $f \in D (\mathcal{A})$ and $0 \leq t <T$, 
\begin{equation}\label{eq62}
\| \mathcal{B} (t) f \|_{H} \leq C_1 \| \mathcal{A} f \|_H + C_2 \| f \|_{H}.
\end{equation}
$(\mathrm{v})$ There is $C >0$ such that for all $\varphi \in C ((0,T) ; D (\mathcal{A}))$ and $0 < s \leq t <T$,
\begin{multline}\label{eq63}
\| \mathcal{B} (t) \varphi (t) - \mathcal{B} (s) \varphi (s) \|_H  \leq C (t - s) ( \| \mathcal{A} \varphi (t) \|_H + \| \varphi (t) \|_H ) \\
+ C ( \| \mathcal{A} \varphi (t) - \mathcal{A} \varphi (s) \|_H + \| \varphi (t) - \varphi (s) \|_H ).
\end{multline}
$(\mathrm{vi})$ For each fixed $t_0 \in [0,T )$ the operator $- \mathcal{L} (t_0)$ generates an analytic semigroup on $H$.
\end{assumption}
\noindent Since $-\mathcal{A}$ generates an analytic semigroup on $H$, it follows from the perturbation theory (Proposition \ref{prop82}) to see that $(\mathrm{vi})$ holds if $C_1$ is sufficiently small. Remark that $(\mathrm{vi})$ is not essential but important.

In this section we apply the maximal $L^2$-regularity of $\mathcal{A}$ to construct strong solutions to system \eqref{eq61}. 
\begin{lemma}[Maximal $L^2$-regularity of $\mathcal{A}$]\label{lem62}{ \ }\\
For each $T \in (0, \infty ]$ and $(F , V_0 ) \in L^2 (0, T ; H ) \times D ( \mathcal{A}^{\frac{1}{2}})$, there exists a unique function $V$ satisfying the following system:
\begin{equation*}
\begin{cases}
\frac{d}{d t}V + \mathcal{A} V = F & \text{ on } (0,T),\\
V|_{t =0} = V_0,
\end{cases}
\end{equation*}
and the estimate:
\begin{equation}\label{eq64}
\| d V/{d t} \|_{L^2(0,T;H)} + \| \mathcal{A} V \|_{L^2(0,T;H)} \leq \sqrt{2} \| \mathcal{A}^{\frac{1}{2}} V_0 \|_{H} + C_{\mathcal{A}} \| F \|_{L^2(0,T;H)}.
\end{equation}
Here the positive constant $C_{\mathcal{A}}$ does not depend on $T$, $F$, and $V_0$.
\end{lemma}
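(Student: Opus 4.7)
The plan is to mimic the proof of Lemma \ref{lem35} almost verbatim, since the only role of the concrete structure of $A$ in that argument was that $A$ is a non-negative selfadjoint operator whose negative generates a bounded analytic $C_0$-semigroup of contractions on $L^2(U)$ --- precisely the properties (i)--(iii) of Assumption \ref{ass61}. First I would split system \eqref{eq61} (without the $\mathcal{B}$-term) into the homogeneous problem
\begin{equation*}
\tfrac{d}{dt}V_\sharp + \mathcal{A} V_\sharp = 0, \qquad V_\sharp|_{t=0}=V_0,
\end{equation*}
and the inhomogeneous problem
\begin{equation*}
\tfrac{d}{dt}V_\flat + \mathcal{A} V_\flat = F, \qquad V_\flat|_{t=0}=0,
\end{equation*}
and set $V=V_\sharp+V_\flat$.

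For the homogeneous part, since $-\mathcal{A}$ generates a $C_0$-semigroup on $H$, the function $V_\sharp(t)=\mathrm{e}^{-t\mathcal{A}}V_0$ is well defined. Introducing $V_\natural(t)=\mathrm{e}^{-t\mathcal{A}}\mathcal{A}^{1/2}V_0$ and using that $\mathcal{A}$ is non-negative selfadjoint, I would compute
\begin{equation*}
\tfrac{1}{2}\tfrac{d}{dt}\|V_\natural(t)\|_H^2 = -\|\mathcal{A}^{1/2}V_\natural(t)\|_H^2,
\end{equation*}
integrate over $(0,t)$, and use the commutation $\mathcal{A}^{1/2}V_\natural(t)=\mathcal{A}\mathrm{e}^{-t\mathcal{A}}V_0$ to conclude
\begin{equation*}
\int_0^\infty \|\mathcal{A}\mathrm{e}^{-t\mathcal{A}}V_0\|_H^2\,dt \le \tfrac{1}{2}\|\mathcal{A}^{1/2}V_0\|_H^2,
\end{equation*}
which together with $dV_\sharp/dt=-\mathcal{A}V_\sharp$ yields
\begin{equation*}
\|dV_\sharp/dt\|_{L^2(0,T;H)}+\|\mathcal{A}V_\sharp\|_{L^2(0,T;H)} \le \sqrt{2}\,\|\mathcal{A}^{1/2}V_0\|_H.
\end{equation*}

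For the inhomogeneous part I would invoke the de Simon theorem (Proposition \ref{prop81} in the Appendix): since $H$ is Hilbert and $-\mathcal{A}$ generates a bounded analytic semigroup, $\mathcal{A}$ enjoys maximal $L^2$-regularity on $(0,T)$, giving a unique $V_\flat$ with
\begin{equation*}
\|dV_\flat/dt\|_{L^2(0,T;H)}+\|\mathcal{A}V_\flat\|_{L^2(0,T;H)} \le C_{\mathcal{A}}\|F\|_{L^2(0,T;H)},
\end{equation*}
where $C_{\mathcal{A}}$ depends only on $\mathcal{A}$ (not on $T$, $F$, $V_0$); this independence on $T$ is important and follows from the standard Hilbert-space version of maximal regularity, which extends to $T=\infty$ because the resolvent bound defining analyticity is uniform. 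Adding the two estimates produces \eqref{eq64}, and uniqueness follows from the uniqueness clause of the same maximal regularity result applied to the difference of two solutions.

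The main obstacle --- really the only one --- is verifying that the constant $C_{\mathcal{A}}$ can be chosen independently of $T$; the rest is standard. I would argue this by first solving the problem on $(0,\infty)$ (extending $F$ by zero to $(T,\infty)$), obtaining a solution with the desired global estimate, and then restricting to $(0,T)$ to get the $T$-uniform bound, using that the $L^2$-norms only decrease under restriction. No use of Assumption \ref{ass61} (iv)--(vi) is needed for this lemma --- those hypotheses enter later when $\mathcal{B}$ is reintroduced via the approximation scheme described in the introduction.
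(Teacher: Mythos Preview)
Your proposal is correct and follows essentially the same route as the paper: the paper does not give a separate proof of Lemma~\ref{lem62} but simply notes that, since $-\mathcal{A}$ generates a bounded analytic semigroup on $H$ and $\mathcal{A}$ is non-negative selfadjoint, the argument of Lemma~\ref{lem35} carries over verbatim. Your decomposition $V=V_\sharp+V_\flat$, the energy identity for $V_\natural=\mathrm{e}^{-t\mathcal{A}}\mathcal{A}^{1/2}V_0$, and the appeal to Proposition~\ref{prop81} for the inhomogeneous part match that argument step for step, and your observation that only hypotheses (i)--(iii) of Assumption~\ref{ass61} are used is exactly right.
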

\noindent Since $- \mathcal{A}$ generates a bounded analytic semigroup on $H$ and $A$ is a non-negative selfadjoint operator on $H$, we use an argument similar to that in the proof of Lemma \ref{lem35} to have Lemma \ref{lem62}.

Before stating the main result of this section, we introduce the two function spaces and the definition of strong solutions to system \eqref{eq61}. Define
\begin{equation*}
Y_T = \{ \varphi \in C((0,T); H); { \ } \| \varphi \|_{Y_T} < \infty \}
\end{equation*}
with
\begin{equation*}
\| \varphi \|_{Y_T} := \sup_{0 < t <T}\{ \mathrm{e}^{- t} \| \varphi \|_H \} + \| d \varphi /{d t} \|_{L^2 (0,T;H)} + \| \mathcal{A} \varphi \|_{L^2 (0,T;H)}.
\end{equation*}
For each $0 < \mathfrak{a} \leq 1 $,
\begin{multline*}
C_{loc}^{\mathfrak{a}} ((0,T);H) := \{ \varphi \in C ((0,T) ;H); { \ } \text{for each }\varepsilon , T' >0 \text{ such that}\\
0< \varepsilon < T' < T \text{ there is }C (\varepsilon , T') >0\text{ such that}\\
\text{ for all } \varepsilon \leq s \leq t \leq T',{ \ }\| \varphi (t) - \varphi (s) \|_H \leq C(\varepsilon , T') (t-s)^{\mathfrak{a}} \}.
\end{multline*}
Let $v_0 \in D (\mathcal{A}^{\frac{1}{2}})$ and $v \in Y_T$. We call $v$ a \emph{strong solution} to system \eqref{eq61} with initial datum $v_0$ if the function $v$ satisfies the following two properties: $(\mathrm{i})$ $\| d v/{dt} + \mathcal{L} v \|_{L^2(0,T;H)}=0$, $(\mathrm{ii})$ $\lim_{t \to 0 + 0} v (t) = v_0$ in $H$.

Theorem \ref{thm63} is the main results of this section.
\begin{theorem}\label{thm63}
$(\mathrm{i})$ Assume that $T < \infty$ and that
\begin{equation}\label{eq65}
C_1 ( C_{\mathcal{A}} + 1 ) < \frac{1}{4 \sqrt{2}}.
\end{equation}
Then for each $v_0 \in D (\mathcal{A}^{\frac{1}{2}})$ system \eqref{eq61} admits a unique strong solution $v$ in $Y_{T_*}$, satisfying
\begin{equation*}
\| v \|_{Y_{T_*}} \leq 2 \| v_0 \|_H + 2 \sqrt{2} \| \mathcal{A}^{\frac{1}{2}} v_0 \|_H,
\end{equation*}
and for each $0 < t < T_*$
\begin{equation}\label{eq66}
v (t) = \mathrm{e}^{ - t \mathcal{A}} v_0 - \int_0^t \mathrm{e}^{- (t - \tau ) \mathcal{A}} \mathcal{B} v (\tau ) { \ }d \tau .
\end{equation}
Here
\begin{equation*}
T_* = \min \left\{ T , \frac{1}{2} \log \left( 1 + \frac{1}{16 C_2^2 (C_{\mathcal{A}} + 1)^2} \right) \right\}.
\end{equation*}
$(\mathrm{ii})$ Assume that $T = \infty$ and $C_2 = 0$. Suppose that \eqref{eq65} holds. Then for each $v_0 \in D ( \mathcal{A}^{\frac{1}{2}} )$ system \eqref{eq61} admits a unique strong solution $v$ in $Y_\infty$, satisfying
\begin{equation*}
\| v \|_{Y_\infty} \leq 2 \| v_0 \|_H + 2 \sqrt{2} \| \mathcal{A}^{\frac{1}{2}} V_0 \|_H,
\end{equation*}
and for each $0 < t < \infty$, \eqref{eq66} holds.\\
Here $C_1$, $C_2$, and $C_{\mathcal{A}}$ are the two positive constants appearing in \eqref{eq62} and the positive constant appearing in \eqref{eq64}, respectively.
\end{theorem}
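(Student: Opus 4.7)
The plan is to set up a Picard-type iteration, bootstrap it with the maximal $L^2$-regularity of $\mathcal{A}$ (Lemma \ref{lem62}) together with contractivity of the semigroup $\{\mathrm{e}^{-t\mathcal{A}}\}$, and show that in the norm $\|\cdot\|_{Y_T}$ the iterates form a Cauchy sequence once $T$ is chosen as in the statement. Concretely, define $v_1$ by $\tfrac{d}{dt}v_1 + \mathcal{A}v_1 = 0$, $v_1|_{t=0}=v_0$, and inductively $v_{m+1}$ by $\tfrac{d}{dt}v_{m+1} + \mathcal{A}v_{m+1} = -\mathcal{B}(t)v_m$, $v_{m+1}|_{t=0}=v_0$. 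Since $v_0 \in D(\mathcal{A}^{1/2})$ and $\mathcal{B}v_m \in L^2(0,T;H)$, Lemma \ref{lem62} shows each $v_{m+1}$ exists in $C([0,T);H)$ with $\mathcal{A}v_{m+1}, dv_{m+1}/dt \in L^2(0,T;H)$, and by Duhamel's principle one has
\begin{equation*}
v_{m+1}(t) = \mathrm{e}^{-t\mathcal{A}}v_0 - \int_0^t \mathrm{e}^{-(t-\tau)\mathcal{A}}\mathcal{B}(\tau)v_m(\tau)\,d\tau.
\end{equation*}

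The first key step is an a priori bound for the operator $\Phi: \varphi \mapsto \psi$ where $\psi$ solves $\tfrac{d}{dt}\psi + \mathcal{A}\psi = -\mathcal{B}\varphi$ with $\psi|_{t=0}=\varphi_0$. Using that $-\mathcal{A}$ generates a contraction $C_0$-semigroup one gets
\begin{equation*}
\|\psi(t)\|_H \leq \|\varphi_0\|_H + \sqrt{t}\,\|\mathcal{B}\varphi\|_{L^2(0,T;H)},
\end{equation*}
so that $\sup_{0<t<T} \mathrm{e}^{-t}\|\psi(t)\|_H \leq \|\varphi_0\|_H + \|\mathcal{B}\varphi\|_{L^2(0,T;H)}$ because $\mathrm{e}^{-t}\sqrt{t}\leq 1$. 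Combining this with \eqref{eq64} gives
\begin{equation*}
\|\psi\|_{Y_T} \leq \|\varphi_0\|_H + \sqrt{2}\,\|\mathcal{A}^{1/2}\varphi_0\|_H + (C_{\mathcal{A}}+1)\|\mathcal{B}\varphi\|_{L^2(0,T;H)}.
\end{equation*}
The second key step bounds the right-hand side in terms of $\|\varphi\|_{Y_T}$. From \eqref{eq62} and the weighted estimate
\begin{equation*}
\|\varphi\|_{L^2(0,T;H)}^2 \leq \int_0^T \mathrm{e}^{2t}\,dt \cdot \Bigl(\sup_{0<t<T} \mathrm{e}^{-t}\|\varphi\|_H\Bigr)^2 \leq \frac{\mathrm{e}^{2T}-1}{2}\|\varphi\|_{Y_T}^2,
\end{equation*}
one obtains $\|\mathcal{B}\varphi\|_{L^2(0,T;H)} \leq \bigl(C_1 + C_2\sqrt{(\mathrm{e}^{2T}-1)/2}\bigr)\|\varphi\|_{Y_T}$.

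Set $\theta(T) := (C_{\mathcal{A}}+1)\bigl(C_1 + C_2\sqrt{(\mathrm{e}^{2T}-1)/2}\bigr)$. The definition of $T_*$ together with \eqref{eq65} gives $\theta(T_*) \leq 1/(4\sqrt{2}) + 1/(4\sqrt{2}) = 1/(2\sqrt{2}) < 1/2$. Applying the a priori bound to differences: $w_m := v_{m+1}-v_m$ satisfies $\tfrac{d}{dt}w_m + \mathcal{A}w_m = -\mathcal{B}w_{m-1}$ with zero initial data, so $\|w_m\|_{Y_{T_*}} \leq \theta(T_*)\|w_{m-1}\|_{Y_{T_*}}$, which makes $\{v_m\}$ Cauchy in $Y_{T_*}$ with limit $v$ satisfying \eqref{eq66}. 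The geometric-series bound yields $\|v\|_{Y_{T_*}} \leq (1-\theta(T_*))^{-1}(\|v_0\|_H + \sqrt{2}\|\mathcal{A}^{1/2}v_0\|_H) \leq 2(\|v_0\|_H + \sqrt{2}\|\mathcal{A}^{1/2}v_0\|_H)$ since $\theta(T_*) < 1/2$. Passing to the limit in $\|dv_m/dt + \mathcal{A}v_m + \mathcal{B}v_{m-1}\|_{L^2}=0$ (the two nonlinear-in-index terms converge in $L^2(0,T_*;H)$) shows $v$ is a strong solution; uniqueness follows by applying the same contraction estimate to the difference of two solutions with common initial datum.

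For part (ii), when $C_2=0$ the quantity $\theta(T)$ reduces to $(C_{\mathcal{A}}+1)C_1 < 1/(4\sqrt{2})$ independently of $T$, so the entire contraction argument runs on $(0,\infty)$ without modification, yielding the global solution. The main technical obstacle is the interplay between the weighted sup-norm and the $L^2$-in-time control of the lower-order term $C_2\|\varphi\|_H$: without the exponential weight $\mathrm{e}^{-t}$ in the definition of $Y_T$, the $L^2(0,T;H)$ norm of $\varphi$ cannot be absorbed into $\|\varphi\|_{Y_T}$ with a coefficient that tends to zero as $T \to 0$, and it is precisely this absorption that forces the logarithmic formula for $T_*$.
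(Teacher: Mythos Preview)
Your argument is correct and follows the same overall strategy as the paper: set up the Picard iteration $v_1,v_2,\dots$, control each iterate in $Y_T$ via Lemma~\ref{lem62} plus the Duhamel bound $\|\psi(t)\|_H\le\|\varphi_0\|_H+\sqrt{t}\,\|\mathcal{B}\varphi\|_{L^2}$, and close a contraction once $T$ is restricted to $T_*$. Your contraction factor $\theta(T_*)\le 1/(2\sqrt{2})$ is even slightly sharper than the paper's $1/2$, because you bound $\|\mathcal{B}\varphi\|_{L^2(0,T;H)}$ by the triangle inequality in $L^2(0,T;H)$ rather than squaring the pointwise estimate as in Lemma~\ref{lem66}.

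The one substantive difference is that the paper spends most of its effort (Proposition~\ref{prop64} and Lemma~\ref{lem65}) establishing that each iterate $v_m$ and $\mathcal{A}v_m$ are locally H\"older in time, using Assumption~\ref{ass61}(v), so that $\mathcal{B}v_m$ is H\"older and classical semigroup theory yields the Duhamel representation and $C^1((0,T);H)\cap C((0,T);D(\mathcal{A}))$ regularity of $v_{m+1}$. You skip this entirely and invoke Duhamel directly for $L^2$-in-time forcing. That is legitimate---the mild solution and the maximal-regularity solution coincide for $F\in L^2(0,T;H)$, and $W^{1,2}(0,T;H)\hookrightarrow C([0,T);H)$ gives the continuity you need---but note that this fact is not stated anywhere in the paper, so in the paper's internal logic your shortcut would need a one-line justification. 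The payoff of your route is that you never use Assumption~\ref{ass61}(v) or (vi); the price is that you do not recover the classical regularity $v_m\in C((0,T);D(\mathcal{A}))\cap C^1((0,T);H)$ of the iterates that the paper obtains along the way.
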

To prove Theorem \ref{thm63}, we consider the following approximate equations.
\begin{proposition}\label{prop64}
Let $v_0 \in D ( \mathcal{A}^{\frac{1}{2}})$. For each $m \in \mathbb{N}$, set $v_1$ and $v_{m+1}$ as follows:
\begin{equation}\label{eq67}
\begin{cases}
\frac{d}{d t} v_1 + \mathcal{A} v_1 = 0 & \text{ on } (0,T),\\
v_1 |_{t =0} = v_0,
\end{cases}
\end{equation}
\begin{equation}\label{eq68}
\begin{cases}
\frac{d}{d t} v_{m+1} + \mathcal{A} v_{m+1} = -\mathcal{B} v_m & \text{ on } (0,T),\\
v_{m+1} |_{t =0} = v_0.
\end{cases}
\end{equation}
Then the following two assertions hold:\\
$(\mathrm{i})$ Assume that $T < \infty$. Then for each $m \in \mathbb{N}$,
\begin{align*}
\lim_{t \to 0 +0} v_m (t) & = v_0 \text{ in } H,\\
v_m & \in C ([0,T); H) \cap C ((0,T); D ( \mathcal{A})) \cap C^1 ((0,T);H),\\
v_m , \mathcal{A} v_m & \in C_{loc}^{\left(\frac{1}{2}\right)^m} ((0,T);H),\\
\| v_1 \|_{Y_T} & \leq \| v_0 \|_H + \sqrt{2} \| \mathcal{A}^{\frac{1}{2}} v_0 \|_H ,\\
\| v_{m+1} \|_{Y_T} \leq \| v_0 \|_H +&  \sqrt{2} \| \mathcal{A}^{\frac{1}{2}} v_0 \|_H + ( \sqrt{2} C_1 + C_2 \sqrt{ \mathrm{e}^{2 T} - 1 } )  (C_{\mathcal{A}} +1 ) \| v_m \|_{Y_T},\\
\| v_{m+2} - v_{m+1} \|_{Y_T} & \leq ( \sqrt{2} C_1 + C_2 \sqrt{ \mathrm{e}^{2 T} - 1 } ) ( C_{\mathcal{A}} + 1 ) \| v_{m+1} - v_m \|_{Y_T},
\end{align*}
and for each $0 < t <T$,
\begin{equation}\label{eq69}
v_{m+1} (t) = \mathrm{e}^{- t \mathcal{A}} v_0 - \int_0^t \mathrm{e}^{- (t-\tau ) \mathcal{A}} \mathcal{B}(\tau ) v_m (\tau ) { \ }d \tau .
\end{equation}
$(\mathrm{ii})$ Assume that $T = \infty$ and that $C_2 =0$. Then for each $m \in \mathbb{N}$,
\begin{align*}
\lim_{t \to 0 +0} v_m (t) & = v_0 \text{ in } H,\\
v_m & \in C ([0,\infty ); H) \cap C ((0, \infty); D ( \mathcal{A})) \cap C^1 ((0,\infty );H),\\
v_m , \mathcal{A} v_m & \in C_{loc}^{\left(\frac{1}{2}\right)^m} ((0, \infty );H),\\
\| v_1 \|_{Y_\infty} & \leq  \| v_0 \|_H + \sqrt{2} \| \mathcal{A}^{\frac{1}{2}} v_0 \|_H ,\\
\| v_{m+1} \|_{Y_\infty} & \leq \| v_0 \|_H + \sqrt{2} \| \mathcal{A}^{\frac{1}{2}} v_0 \|_H + \sqrt{2} C_1  (C_{\mathcal{A}} +1 ) \| v_m \|_{Y_\infty},\\
\| v_{m+2} - v_{m+1} \|_{Y_\infty} & \leq \sqrt{2} C_1 ( C_{\mathcal{A}} + 1 ) \| v_{m+1} - v_m \|_{Y_\infty},
\end{align*}
and for each $0 < t < \infty$, \eqref{eq69} holds.
\end{proposition}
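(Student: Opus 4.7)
The plan is to prove Proposition \ref{prop64} by induction on $m$, handling parts (i) and (ii) in parallel since they differ only in setting $C_2 = 0$ and $T = \infty$. For the base case ($m=1$), I will take $v_1(t) := \mathrm{e}^{-t\mathcal{A}} v_0$, the solution furnished by the bounded analytic $C_0$-semigroup of contractions generated by $-\mathcal{A}$. Since $v_0 \in D(\mathcal{A}^{1/2})$, standard analytic-semigroup theory gives the required regularity $v_1 \in C([0,T);H) \cap C((0,T); D(\mathcal{A})) \cap C^1((0,T);H)$, and on any compact subinterval of $(0,T)$ the map $t \mapsto \mathrm{e}^{-t\mathcal{A}} v_0$ is analytic, in particular $v_1, \mathcal{A} v_1 \in C_{loc}^{1/2}((0,T); H)$. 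The $Y_T$-bound for $v_1$ then follows from the contraction estimate $\sup_t \mathrm{e}^{-t}\|v_1(t)\|_H \leq \|v_0\|_H$ and from Lemma \ref{lem62} applied with $F = 0$, which gives $\|dv_1/dt\|_{L^2} + \|\mathcal{A} v_1\|_{L^2} \leq \sqrt{2}\|\mathcal{A}^{1/2} v_0\|_H$.

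For the inductive step, assuming the assertions for $v_m$, I will first verify $\mathcal{B} v_m \in L^2(0,T;H)$ using \eqref{eq62}, the bound $\|\mathcal{A} v_m\|_{L^2} \leq \|v_m\|_{Y_T}$, and the elementary estimate $\|v_m\|_{L^2(0,T;H)}^2 \leq \tfrac{1}{2}(\mathrm{e}^{2T}-1)\|v_m\|_{Y_T}^2$ coming from the factor $\mathrm{e}^{-t}$ in the $Y_T$-norm; in case (ii) the $C_2$-term drops and $T = \infty$ is harmless. Lemma \ref{lem62} with $F = -\mathcal{B} v_m$ then yields a unique $v_{m+1}$ solving \eqref{eq68} together with its maximal-regularity bound, and the representation \eqref{eq69} follows from variation of parameters since $\mathcal{B} v_m \in L^2 \subset L^1_{loc}$. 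To upgrade the $L^2$-strong solution to a classical one with the stated continuity at $t=0$ and the pointwise regularity on $(0,T)$, I will invoke Assumption \ref{ass61}(v) with $\varphi = v_m$: combined with the inductive Hölder regularity $v_m, \mathcal{A} v_m \in C_{loc}^{(1/2)^m}$, this propagates $\mathcal{B} v_m \in C_{loc}^{(1/2)^m}((0,T); H)$, and classical parabolic theory for the inhomogeneous problem with analytic-semigroup generator then yields $v_{m+1}, \mathcal{A} v_{m+1} \in C_{loc}^{(1/2)^{m+1}}$ and the required classical-solution regularity.

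For the quantitative $Y_T$-estimate, the $L^2$-parts come directly from Lemma \ref{lem62} combined with Minkowski applied to \eqref{eq62}, producing the constant $C_{\mathcal{A}}(\sqrt{2} C_1 + C_2\sqrt{\mathrm{e}^{2T}-1})$. For the sup-in-time part, I will plug \eqref{eq69} into $\|\cdot\|_H$, use contraction of the semigroup to bound the integrand by $\|\mathcal{B} v_m(\tau)\|_H$, split via \eqref{eq62}, and apply Cauchy--Schwarz together with the elementary bounds $\mathrm{e}^{-t}\sqrt{t}\leq 1$ and $\mathrm{e}^{-t}(\mathrm{e}^t-1)\leq 1$; this produces the additional ``$+1$'' next to $C_{\mathcal{A}}$ in the claimed inequality. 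The contractive bound for $v_{m+2}-v_{m+1}$ follows by subtracting the equations for $v_{m+2}$ and $v_{m+1}$: the difference solves the analogous equation with zero initial data and forcing $-\mathcal{B}(v_{m+1}-v_m)$, so the same argument applied to the difference closes the estimate. The main obstacle I anticipate is the Hölder bookkeeping: upgrading the strong solution provided by Lemma \ref{lem62} to a classical one needs the Hölder-in-time forcing supplied by Assumption \ref{ass61}(v), and the halving of the Hölder exponent at each step must be tracked consistently through the iteration. The quantitative constants themselves are routine, but matching the precise coefficient $(\sqrt{2} C_1 + C_2\sqrt{\mathrm{e}^{2T}-1})(C_{\mathcal{A}}+1)$ requires placing Minkowski and Cauchy--Schwarz inequalities in exactly the right spots.
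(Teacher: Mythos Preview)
Your plan is correct and is essentially the paper's own argument: induction on $m$, with $v_1=\mathrm{e}^{-t\mathcal{A}}v_0$ handled by analytic-semigroup smoothing plus Lemma~\ref{lem62}, and the inductive step run by first propagating $\mathcal{B}v_m\in L^2(0,T;H)\cap C_{loc}^{(1/2)^m}((0,T);H)$ via Assumption~\ref{ass61}(v), then invoking Lemma~\ref{lem62} for existence and the $L^2$-estimates, the variation-of-parameters formula for \eqref{eq69}, and classical analytic-semigroup regularity for H\"older forcing (this is exactly the paper's Lemma~\ref{lem65}) to obtain $v_{m+1},\mathcal{A}v_{m+1}\in C_{loc}^{(1/2)^{m+1}}$; the contractive estimate comes from subtracting the two equations, just as you say.

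One minor bookkeeping correction, since you flagged the constant-matching yourself: your stated route for the sup-in-time piece---split the integrand via \eqref{eq62} \emph{first} and then bound the two pieces using $\mathrm{e}^{-t}\sqrt{t}\le 1$ and $\mathrm{e}^{-t}(\mathrm{e}^t-1)\le 1$---produces the contribution $(C_1+C_2)\|v_m\|_{Y_T}$, which does not combine with the $L^2$-contribution $C_{\mathcal{A}}(\sqrt{2}C_1+C_2\sqrt{\mathrm{e}^{2T}-1})\|v_m\|_{Y_T}$ into the clean factor $(C_{\mathcal{A}}+1)(\sqrt{2}C_1+C_2\sqrt{\mathrm{e}^{2T}-1})$. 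To hit the stated constant exactly, do what the paper does (its Lemma~\ref{lem66}): apply Cauchy--Schwarz to the whole integral,
\[
\int_0^t\|\mathcal{B}v_m(\tau)\|_H\,d\tau\le \sqrt{t}\,\|\mathcal{B}v_m\|_{L^2(0,T;H)},
\]
use $\mathrm{e}^{-t}\sqrt{t}\le 1$, and only \emph{then} invoke the bound $\|\mathcal{B}v_m\|_{L^2}\le(\sqrt{2}C_1+C_2\sqrt{\mathrm{e}^{2T}-1})\|v_m\|_{Y_T}$. This is a cosmetic rearrangement; the substance of your argument is the paper's.
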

To attack Proposition \ref{prop64}, we prepare the two lemmas.
\begin{lemma}\label{lem65}
Let $V_0 \in D ( \mathcal{A}^{\frac{1}{2}})$ and $F \in L^2 (0,T ; H)$. Let $V$ be the function satisfying system \eqref{eq62} with $(V_0, F)$, obtained by Lemma \ref{lem62}. Assume that 
\begin{equation*}
F \in C_{loc}^\mathfrak{a} ((0,T) ; H)
\end{equation*}
for some $0 < \mathfrak{a} \leq 1/2$. Then $\lim_{t \to 0 +0} V (t) = V_0$ in $H$,
\begin{equation}\label{Eq610}
V \in C([0,T) ; H) \cap C((0, T) ; D (\mathcal{A})) \cap C^1 ((0,T); H),
\end{equation}
and for $0 \leq t < T$,
\begin{equation*}
V (t) = \mathrm{e}^{- t \mathcal{A}} V_0 - \int_0^t \mathrm{e}^{- (t-\tau ) \mathcal{A}} F (\tau ) { \ }d \tau .
\end{equation*}
Moreover, 
\begin{equation}\label{Eq611}
V , \mathcal{A} V \in C_{loc}^{{\mathfrak{a}}/{2}} ((0,T) ; H).
\end{equation}
\end{lemma}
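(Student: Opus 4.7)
The plan is to prove the lemma via the Duhamel decomposition $V = V_\sharp + V_\flat$, where $V_\sharp(t) := \mathrm{e}^{-t \mathcal{A}} V_0$ and $V_\flat(t) := -\int_0^t \mathrm{e}^{-(t-\tau)\mathcal{A}} F(\tau) \, d\tau$. Each summand is analyzed separately, and the identification with the maximal $L^2$-regularity solution of Lemma \ref{lem62} is enforced at the end by uniqueness.

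For $V_\sharp$: since $-\mathcal{A}$ generates an analytic $C_0$-semigroup on $H$ and $V_0 \in D(\mathcal{A}^{1/2})$, the strong continuity at $t=0$ gives $V_\sharp \in C([0,T);H)$ with $V_\sharp(0)=V_0$, while analyticity provides $V_\sharp \in C^\infty((0,T); D(\mathcal{A}^k))$ for every $k \in \mathbb{N}$ together with the standard bounds $\|\mathcal{A}^\theta \mathrm{e}^{-\tau \mathcal{A}} V_0\|_H \leq C \tau^{-(\theta - 1/2)} \|\mathcal{A}^{1/2} V_0\|_H$ for $\theta \geq 1/2$. In particular, $V_\sharp$ and $\mathcal{A} V_\sharp$ are locally Lipschitz on $(0,T)$, hence locally $C^{\mathfrak{a}/2}$.

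For $V_\flat$: the hypothesis $F \in C^{\mathfrak{a}}_{loc}((0,T); H)$ puts us in the classical framework of convolution against Hölder continuous forcing. To push $\mathcal{A}$ inside the integral, I would start from the split
\begin{equation*}
-\mathcal{A} \int_0^{t-\delta} \mathrm{e}^{-(t-\tau)\mathcal{A}} F(\tau) \, d\tau = -\mathcal{A} \int_0^{t-\delta} \mathrm{e}^{-(t-\tau)\mathcal{A}} (F(\tau) - F(t)) \, d\tau - (\mathrm{e}^{-\delta \mathcal{A}} - \mathrm{e}^{-t \mathcal{A}}) F(t),
\end{equation*}
then pass to the limit $\delta \to 0+$ using $\|\mathcal{A} \mathrm{e}^{-(t-\tau)\mathcal{A}}\|_{H \to H} \leq C (t-\tau)^{-1}$ against the integrable factor $\|F(\tau) - F(t)\|_H \leq C (t-\tau)^\mathfrak{a}$. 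This identifies $V_\flat(t)$ as an element of $D(\mathcal{A})$ with an explicit formula for $\mathcal{A} V_\flat(t)$; combined with $dV_\flat/dt = -\mathcal{A} V_\flat + F$ and the same type of Hölder estimate on increments, it yields $V_\flat \in C([0,T);H) \cap C((0,T); D(\mathcal{A})) \cap C^1((0,T); H)$ with $V_\flat(0)=0$ and local $C^{\mathfrak{a}/2}$-regularity of both $V_\flat$ and $\mathcal{A} V_\flat$.

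Combining the two, $V_\sharp + V_\flat$ belongs to the solution class of Lemma \ref{lem62} for the data $(V_0, F)$, so by the uniqueness assertion of that lemma it must coincide with $V$. This transfers \eqref{Eq610}, the Duhamel formula, the initial condition, and the local Hölder regularity \eqref{Eq611} to $V$. The main obstacle is the inhomogeneous part $V_\flat$ near the endpoint $\tau = t$: the kernel $\mathcal{A} \mathrm{e}^{-(t-\tau)\mathcal{A}}$ has a non-integrable singularity $(t-\tau)^{-1}$, and it is precisely the assumed local Hölder continuity of $F$—not merely $F \in L^2$—that tames it, and by the same token caps the attainable Hölder exponent of $\mathcal{A} V$ at $\mathfrak{a}/2$.
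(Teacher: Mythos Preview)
Your outline is correct and follows essentially the same route as the paper's proof: Duhamel representation, separate treatment of the homogeneous piece $\mathrm{e}^{-t\mathcal{A}}V_0$ and the convolution piece, and fractional-power semigroup bounds (the paper's Proposition~\ref{prop83}) for the increment estimates. The paper phrases the first part slightly differently---it simply cites analytic-semigroup theory for \eqref{Eq610} and the Duhamel formula, rather than building $V_\sharp+V_\flat$ and identifying via uniqueness---but the content is the same.

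One point you gloss over is the precise mechanism that forces the exponent down to $\mathfrak{a}/2$ for $\mathcal{A}V$. In the paper this is the term
\[
H_3=\mathcal{A}\int_0^s\bigl(\mathrm{e}^{-(t-s)\mathcal{A}}-1\bigr)\mathrm{e}^{-(s-\tau)\mathcal{A}}\{F(\tau)-F(s)\}\,d\tau,
\]
which is controlled by bounding $\|(\mathrm{e}^{-(t-s)\mathcal{A}}-1)g\|_H\le C(t-s)^{\mathfrak{a}/2}\|\mathcal{A}^{\mathfrak{a}/2}g\|_H$ and then absorbing the extra $\mathcal{A}^{\mathfrak{a}/2}$ into the kernel, yielding $\int_0^s(s-\tau)^{\mathfrak{a}}(s-\tau)^{-1-\mathfrak{a}/2}\,d\tau<\infty$. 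The $\delta$-truncation you describe establishes $V_\flat(t)\in D(\mathcal{A})$ but does not by itself give this increment bound; you would still need the split of the fractional power $\mathfrak{a}=\mathfrak{a}/2+\mathfrak{a}/2$ in the $H_3$-type term. Since your proposal is explicitly a plan and you correctly flag $\mathfrak{a}/2$ as the limiting exponent, this is a detail to fill in rather than a gap in the strategy.
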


\begin{lemma}\label{lem66}
$(\mathrm{i})$ Assume $T < \infty$. Then for $\varphi \in C([0, T) ; H) \cap L^2 (0,T ; D (\mathcal{A}))$,
\begin{equation}\label{Eq612}
\| \mathcal{B} \varphi \|_{L^2(0,T;H)} \leq ( \sqrt{2} C_1 + C_2 \sqrt{ \mathrm{e}^{2 T} - 1 } ) \| \varphi \|_{Y_T}.
\end{equation}
$(\mathrm{ii})$ Assume $T = \infty$ and $C_2 = 0$. Then for $\varphi \in C([0, \infty) ; H) \cap L^2 (0,\infty ; D (\mathcal{A}))$,
\begin{equation*}
\| \mathcal{B} \varphi \|_{L^2(0, \infty ;H)} \leq \sqrt{2} C_1 \| \varphi \|_{Y_\infty}.
\end{equation*}
\end{lemma}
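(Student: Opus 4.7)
The plan is to prove Lemma 6.6 by a direct pointwise-in-$t$ application of the subordination estimate \eqref{eq62} from Assumption \ref{ass61}(iv), followed by $L^2$-integration in time, where the two terms on the right-hand side are controlled by distinct pieces of the $Y_T$-norm: the $\mathcal{A}\varphi$ term by $\|\mathcal{A}\varphi\|_{L^2(0,T;H)}$, and the $\varphi$ term by the weighted sup $\sup_{0<t<T}\mathrm{e}^{-t}\|\varphi(t)\|_H$ after paying an $\mathrm{e}^t$ correction.

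For assertion $(\mathrm{i})$, I would first apply \eqref{eq62} at each $t$ to obtain
\begin{equation*}
\|\mathcal{B}(t)\varphi(t)\|_H^2 \leq 2 C_1^2 \|\mathcal{A}\varphi(t)\|_H^2 + 2 C_2^2 \|\varphi(t)\|_H^2.
\end{equation*}
Integrating from $0$ to $T$ immediately gives
\begin{equation*}
\int_0^T \|\mathcal{A}\varphi(t)\|_H^2 \, dt \leq \|\mathcal{A}\varphi\|_{L^2(0,T;H)}^2 \leq \|\varphi\|_{Y_T}^2.
\end{equation*}
For the $\|\varphi(t)\|_H^2$ piece, the key observation is that $\|\varphi(t)\|_H = \mathrm{e}^t \cdot (\mathrm{e}^{-t}\|\varphi(t)\|_H) \leq \mathrm{e}^t \|\varphi\|_{Y_T}$, so
\begin{equation*}
\int_0^T \|\varphi(t)\|_H^2 \, dt \leq \|\varphi\|_{Y_T}^2 \int_0^T \mathrm{e}^{2t} \, dt = \frac{\mathrm{e}^{2T}-1}{2}\|\varphi\|_{Y_T}^2.
\end{equation*}
Combining the two bounds yields $\|\mathcal{B}\varphi\|_{L^2(0,T;H)}^2 \leq (2 C_1^2 + C_2^2(\mathrm{e}^{2T}-1))\|\varphi\|_{Y_T}^2$, and applying the elementary inequality $\sqrt{a+b}\leq\sqrt{a}+\sqrt{b}$ delivers \eqref{Eq612}.

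For assertion $(\mathrm{ii})$, the hypothesis $C_2 = 0$ annihilates the time-growing term, which is precisely why the estimate survives in the limit $T\to\infty$: we get
\begin{equation*}
\|\mathcal{B}\varphi\|_{L^2(0,\infty;H)}^2 \leq 2 C_1^2 \|\mathcal{A}\varphi\|_{L^2(0,\infty;H)}^2 \leq 2 C_1^2 \|\varphi\|_{Y_\infty}^2,
\end{equation*}
and taking the square root finishes the argument. There is no real obstacle here; the only subtle point is recognising that the exponential weight $\mathrm{e}^{-t}$ inside the $Y_T$-norm is exactly tuned so that the $C_2$-contribution integrates to the quantity $\sqrt{\mathrm{e}^{2T}-1}$ that appears in the statement, and that when $C_2=0$ one may safely let $T=\infty$ without any loss.
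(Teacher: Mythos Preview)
Your proof is correct and follows essentially the same route as the paper: apply \eqref{eq62} pointwise, square and integrate, control the $\mathcal{A}\varphi$ term by $\|\mathcal{A}\varphi\|_{L^2(0,T;H)}$ and the $\varphi$ term via the weighted supremum together with $\int_0^T \mathrm{e}^{2t}\,dt = (\mathrm{e}^{2T}-1)/2$, then use $\sqrt{a+b}\le\sqrt{a}+\sqrt{b}$. The paper omits part $(\mathrm{ii})$ as analogous, exactly as you handle it.
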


\begin{proof}[Proof of Lemma \ref{lem65}]
Fix $V_0 \in D ( \mathcal{A}^{\frac{1}{2}})$ and $F \in L^2(0,T;H) \cap C_{loc}^\mathfrak{a} ((0,T) ; H)$. Let $V$ be the function satisfying system \eqref{eq62} with $(V_0, F)$, obtained by Lemma \ref{lem62}. Since $- \mathcal{A}$ generates an analytic semigroup on $H$ and $F \in C_{loc}^\mathfrak{a} ((0,T) ; H)$, it follows from the semigroup theory to see that $\lim_{t \to 0 +0} V (t) = V_0$, \eqref{Eq610}, and that for $0 < s < t < T$,
\begin{align*}
V (t) & = \mathrm{e}^{- t \mathcal{A}} V_0 + \int_0^t \mathrm{e}^{ - (t - \tau ) \mathcal{A}} F ( \tau ) { \ }d \tau,\\
V (s) & = \mathrm{e}^{- s \mathcal{A}} V_0 + \int_0^s \mathrm{e}^{ - (s - \tau ) \mathcal{A} } F ( \tau ) { \ }d \tau.
\end{align*}
We now prove \eqref{Eq611}. Fix $\varepsilon , T'$ such that $0 < \varepsilon < T' <T$. Let $\varepsilon \leq s \leq t \leq T'$. We first consider the H\"{o}lder continuity of $V$. Since
\begin{multline*}
V (t) - V (s) = ( \mathrm{e}^{- (t -s ) \mathcal{A}} -1 ) \mathrm{e}^{- s \mathcal{A}} V_0\\
+ \int_s^t \mathrm{e}^{- (t - \tau ) \mathcal{A}} F ( \tau ) { \ }d \tau + \int_0^s ( \mathrm{e}^{- (t -s) \mathcal{A}} - 1) \mathrm{e}^{- (s - \tau ) \mathcal{A}} F (\tau ) { \ } d \tau,
\end{multline*}
we use Proposition \ref{prop83} and the H\"{o}lder inequality to see that
\begin{multline*}
\| V (t) - V (s) \|_H = C (t-s)^{\frac{1}{2}} \| \mathcal{A}^{\frac{1}{2}} \mathrm{e}^{- s \mathcal{A}} V_0 \|_H \\
+ C (t-s)^{\frac{1}{2}} \left( \int_s^t \| F ( \tau ) \|_H^2 { \ }d \tau \right)^{\frac{1}{2}} + C \int_0^s \frac{(t-s)^{\frac{1}{4}}}{(s - \tau )^{\frac{1}{4}}} \| F (\tau )\|_H { \ } d \tau\\
\leq C (t-s)^{\frac{1}{2}} \| \mathcal{A}^{\frac{1}{2}} V_0 \|_H + \{ C (t-s)^{\frac{1}{2}} + C (t-s)^{\frac{1}{4}} s^\frac{1}{4} \} \| F \|_{L^2(0,T;H)}\\
\leq C (T') (t - s )^{\frac{1}{4}}.
\end{multline*}
Next we consider the H\"{o}lder continuity of $\mathcal{A} V$. It is easy to check that
\begin{equation*}
\mathcal{A} V (t) - \mathcal{A} V (s) = H_1(t,s) + H_2(t,s) + H_3(t,s) + H_4 (t,s).
\end{equation*}
Here
\begin{align*}
H_1 &:= ( \mathrm{e}^{- (t -s ) \mathcal{A}} -1 ) \mathcal{A} \mathrm{e}^{- s \mathcal{A}} V_0,\\
H_2 &:= \mathcal{A} \int_s^t \mathrm{e}^{- (t - \tau ) \mathcal{A}} \{ F ( \tau ) - F (t) \} { \ }d \tau,\\
H_3 &:=\mathcal{A} \int_0^s \{ \mathrm{e}^{ - (t - s) \mathcal{A}} - 1 \} \mathrm{e}^{- (s - \tau ) \mathcal{A} }  \{ F (\tau ) - F (s) \} { \ }d \tau,\\
H_4 &:= \mathcal{A} \int_s^t \mathrm{e}^{- (t - \tau ) \mathcal{A}} F (t) { \ }d \tau + \mathcal{A} \int_0^s \{ \mathrm{e}^{ - (t - s) \mathcal{A}} - 1 \} \mathrm{e}^{- (s - \tau ) \mathcal{A} }  F (s  ) { \ }d \tau.
\end{align*}
From Proposition \ref{prop83} and $F \in C_{loc}^\mathfrak{a} ((0,T) ; H)$, we see that
\begin{equation*}
\| H_1 \|_H \leq C (t-s)^{\frac{1}{2}} \| \mathcal{A} \mathrm{e}^{- s \mathcal{A}} \mathcal{A}^{\frac{1}{2}} V_0 \|_H \leq  \frac{C (t-s)^{\frac{1}{2}}}{\varepsilon} \| \mathcal{A}^{\frac{1}{2}} V_0 \|_H
\end{equation*}
and that
\begin{equation*}
\| H_2 \|_{H} \leq \int_s^t C \frac{ (t - \tau)^\mathfrak{a}}{t - \tau} d \tau \leq C (t -s)^\mathfrak{a}.
\end{equation*}
We also see that
\begin{align*}
\| H_3 \|_{H} & \leq (t-s)^{\frac{\mathfrak{a}}{2}} C \left\| \int_0^s \mathcal{A}^{1 + \frac{\mathfrak{a}}{2}} \mathrm{e}^{- (s - \tau ) \mathcal{A} }  \{ F (\tau ) - F (s) \} { \ }d \tau \right\|_{H}\\
& \leq (t-s)^{\frac{\mathfrak{a}}{2}} C \int_0^s \frac{(s - \tau )^\mathfrak{a}}{(s - \tau )^{1 + \frac{\mathfrak{a}}{2}}} { \ }d \tau\\
& \leq C (t - s)^{\frac{\mathfrak{a}}{2}} s^\frac{\mathfrak{a}}{2} \leq C (t - s)^{\frac{\mathfrak{a}}{2}} {T'}^{\frac{\mathfrak{a}}{2}}.
\end{align*}
From $\frac{d}{d \tau} \mathrm{e}^{- \tau \mathcal{A}} f = - \mathcal{A} \mathrm{e}^{- \tau \mathcal{A}} f $, we find that
\begin{equation*}
H_4 =F (t) - F (s) -\mathrm{e}^{- (t -s )\mathcal{A}} \{ F (t) - F (s) \} + (\mathrm{e}^{- (t -s)\mathcal{A}} - 1) \mathrm{e}^{-s \mathcal{A}} F (s).
\end{equation*}
Since $F \in C_{loc}^\mathfrak{a} ((0,T) ; H)$ and
\begin{align*}
\| (\mathrm{e}^{- (t -s)\mathcal{A}} - 1) \mathrm{e}^{-s \mathcal{A}} F (s) \|_{H} & = C (t-s)^{\mathfrak{a}} \| \mathcal{A}^{\mathfrak{a}} \mathrm{e}^{- s \mathcal{A}} F (s) \|_{H}\\
& \leq \frac{C (t-s)^{\mathfrak{a}}}{\varepsilon^{\mathfrak{a}}} \sup_{\varepsilon \leq s \leq T'} \| F (s) \|_{H},
\end{align*}
we check that
\begin{equation*}
\| H_4 \|_{H} \leq C(\varepsilon , T') (t-s)^{\mathfrak{a}}.
\end{equation*}
As a result, we see that
\begin{equation*}
\| \mathcal{A} V (t) - \mathcal{A} V (s) \|_H = C( \varepsilon , T' ) (t - s )^{\frac{ \mathfrak{a} }{2}} .
\end{equation*}
Therefore, we conclude that
\begin{equation*}
\| V (t) - V (s) \|_H + \| \mathcal{A} V (t) - \mathcal{A} V (s) \|_H = C( \varepsilon , T' ) (t - s )^{\frac{ \mathfrak{a} }{2}}.
\end{equation*}
Since $\varepsilon , T'$ are arbitrary, we see \eqref{Eq611}. Therefore the lemma follows. 
\end{proof}

\begin{proof}[Proof of Lemma \ref{lem66}]
We only prove $(\mathrm{i})$. Assume $T < \infty$. Let $\varphi \in C([0, T) ; H) \cap L^2 (0,T ; D (\mathcal{A}))$. By \eqref{eq62}, we observe that
\begin{align*}
\int_0^T \| \mathcal{B} \varphi \|_H^2 { \ }d t \leq 2 C_1^2 \int_0^T \| \mathcal{A} \varphi \|_H^2 { \ }d t + 2 C_2^2 \int_0^T \mathrm{e}^{2 t} \mathrm{e}^{-2 t}\| \varphi \|_H^2 { \ }d t\\
\leq 2 C_1^2 \int_0^T \| \mathcal{A} \varphi \|_H^2 { \ }d t + C_2^2 (\mathrm{e}^{2 T} - 1) (\sup_{0 < t <T}\{ \mathrm{e}^{- t } \| \varphi \|_H \} )^2.
\end{align*}
Since $( a + b )^{\frac{1}{2}} \leq a^{\frac{1}{2}} + b^{\frac{1}{2}}$ $(a,b \geq 0)$, we check that 
\begin{align*}
\| \mathcal{B} \varphi \|_{L^2(0,T;H)} & \leq \sqrt{2} C_1 \| \mathcal{A} \varphi \|_{L^2 (0,T;H)} + C_2 \sqrt{ \mathrm{e}^{2 T} - 1 } \sup_{0 < t <T}\{ \mathrm{e}^{- t } \| \varphi \|_H \}\\
& \leq ( \sqrt{2} C_1 + C_2 \sqrt{ \mathrm{e}^{2 T} - 1 }) \| \varphi \|_{Y_T}.
\end{align*}
Thus, we have \eqref{Eq612}. Therefore the lemma follows. 
\end{proof}

Let us attack Proposition \ref{prop64}.
\begin{proof}[Proof of Proposition \ref{prop64}]
We only prove $(\mathrm{i})$ since $(\mathrm{ii})$ is similar. Assume that $T < \infty$. Fix $v_0 \in D (\mathcal{A}^{\frac{1}{2}})$ and $0 < \varepsilon <T' < T$. Let $s,t >0$ such that $\varepsilon \leq s \leq t \leq T'$.

We first consider $v_1$, i.e., system \eqref{eq67}. From the maximal $L^2$-regularity (Lemma \ref{lem62}) of $\mathcal{A}$, there is a unique function $v_1$ satisfying system \eqref{eq67} and
\begin{equation}\label{Eq613}
\| d v_1/d t \|_{L^2 (0,T; H)} + \| \mathcal{A} v_1 \|_{L^2(0,T; H)} \leq \sqrt{2} \| \mathcal{A}^{\frac{1}{2}} v_0 \|_H.
\end{equation}
Since $- \mathcal{A}$ generates an analytic $C_0$-semigroup on $H$, we see that
\begin{align*}
& \lim_{t \to 0+0} v_1 (t) = v_0 \text{ in } H,\\
& v_1 \in C ([0,T);H) \cap C ((0,T); D ( \mathcal{A})) \cap C^1 ((0,T);H),
\end{align*}
and for each $0 \leq t < T$
\begin{equation*}
v_1 (t) = \mathrm{e}^{- t \mathcal{A}} v_0.
\end{equation*}
Since $\mathrm{e}^{- t \mathcal{A}}$ is contraction $C_0$-semigroup on $H$, we check that
\begin{equation}\label{Eq614}
\sup_{0< t <T}\{ \mathrm{e}^{- t } \| v_1 \|_H \} = \sup_{0< t <T}\{ \mathrm{e}^{- t } \| \mathrm{e}^{- t \mathcal{A}} v_0 \|_H \} \leq \| v_0 \|_H.
\end{equation}
By \eqref{Eq613} and \eqref{Eq614}, we have
\begin{equation}\label{Eq615}
\| v_1 \|_{Y_T} \leq \| v_0 \|_H + \sqrt{2} \| \mathcal{A}^{\frac{1}{2}} v_0 \|_H.
\end{equation}
Next we consider the H\"{o}lder continuity of $v_1$ and $\mathcal{B} v_1$. From Proposition \ref{prop83}, we see that
\begin{align*}
\| v_1 (t) - v_1 (s) \|_H & = \| (\mathrm{e}^{- (t -s ) \mathcal{A}} - 1) \mathrm{e}^{- s \mathcal{A}} v_0 \|_H\\
& \leq C (t-s)^{\frac{1}{2}} \| \mathcal{A}^{\frac{1}{2}} \mathrm{e}^{- s \mathcal{A}} v_0 \|_H \leq C (t-s)^{\frac{1}{2}} \| \mathcal{A}^{\frac{1}{2}} v_0 \|_H
\end{align*}
and that
\begin{align*}
\| \mathcal{A} v_1 (t) - \mathcal{A} v_1 (s) \|_H & = \| (\mathrm{e}^{- (t -s ) \mathcal{A}} - 1) \mathcal{A} \mathrm{e}^{- s \mathcal{A}} v_0 \|_H\\
& \leq C (t-s)^{\frac{1}{2}} \| \mathcal{A} \mathrm{e}^{- s \mathcal{A}} \mathcal{A}^{\frac{1}{2}} v_0 \|_H \leq \frac{C (t-s)^{\frac{1}{2}}}{\varepsilon}\| \mathcal{A}^{\frac{1}{2}} v_0 \|_H .
\end{align*}
Therefore, we have
\begin{align}\label{Eq616}
\| v_1 (t) - v_1 (s) \|_H + \| \mathcal{A} v_1 (t) - \mathcal{A} v_1 (s) \|_H \leq C(\varepsilon ) (t-s)^{\frac{1}{2}} .
\end{align}
From \eqref{eq63} and \eqref{Eq616}, we observe that
\begin{align*}
\| \mathcal{B} (t) v_1 (t) - \mathcal{B} (s) v_1 (s) \|_H & \leq C (t - s) ( \| \mathcal{A} v_1 (t) \|_H + \| v_1 (t) \|_H ) + C(\varepsilon) (t-s)^{\frac{1}{2}}\\
& \leq C(\varepsilon , T') (t-s)^{\frac{1}{2}}.
\end{align*}
By \eqref{Eq612}, we find that
\begin{align}\label{Eq617}
\| \mathcal{B} v_1 \|_{L^2(0,T;H)} \leq ( \sqrt{2} C_1 + C_2 \sqrt{\mathrm{e}^{2 T} - 1} ) \| v_1 \|_{Y_T} < C (T) < + \infty .
\end{align}
Therefore, we conclude that
\begin{equation}\label{Eq618}
\mathcal{B} v_1 \in L^2(0,T;H) \text{ and } \mathcal{B} v_1 \in C_{loc}^{\frac{1}{2}} ((0,T);H).
\end{equation}

Next we consider $v_2$, i.e.,
\begin{equation}\label{Eq619}
\begin{cases}
\frac{d}{d t}v_2 + \mathcal{A} v_2 = - \mathcal{B} v_1 & \text{ on } (0,T),\\
v_2 |_{t=0} =v_0.
\end{cases}
\end{equation}
Since $v_0 \in D (\mathcal{A}^{\frac{1}{2}})$ and $\mathcal{B}v_1 \in L^2(0,T;H)$, it follows from Lemma \ref{lem62} to see that there exists a unique function $v_2$ satisfying system \eqref{Eq619} and 
\begin{multline}\label{Eq620}
\| d v_2/d t \|_{L^2 (0,T; H)} + \| \mathcal{A} v_2 \|_{L^2(0,T; H)}\\
\leq \sqrt{2} \| \mathcal{A}^{\frac{1}{2}} v_0 \|_H + C_{\mathcal{A}} \| \mathcal{B} v_1 \|_{L^2(0,T;H)} .
\end{multline}
Since $- \mathcal{A}$ generates an analytic semigroup on $H$ and \eqref{Eq618} holds, we see that
\begin{equation*}
v_2 \in C ( (0,T);H) \cap C ((0,T); D ( \mathcal{A})) \cap C^1 ((0,T);H)
\end{equation*}
and for each $0 < t < T$
\begin{equation*}
v_2 (t) = \mathrm{e}^{- t \mathcal{A}} v_0 - \int_0^t \mathrm{e}^{- (t - \tau ) \mathcal{A}} \mathcal{B} v_1 { \ } d \tau.
\end{equation*}
Since
\begin{equation*}
\| v_2 (t) \|_H \leq \| v_0 \|_H + t^{\frac{1}{2}} \left( \int_0^t \| \mathcal{B} v_1 \|_H^2 { \ }d \tau \right)^{\frac{1}{2}}
\end{equation*}
from $||| \mathrm{e}^{- t \mathcal{A}} ||| \leq 1$ and the H\"{o}lder inequality, we have
\begin{equation}\label{Eq621}
\sup_{0 < t < T} \{ \mathrm{e}^{- t } \| v_2 (t) \|_H \} \leq \| v_0 \|_H + \| \mathcal{B} v_1 \|_{L^2(0,T;H)}.
\end{equation}
In the same manner, we find that
\begin{equation*}
\lim_{t \to 0 + 0} \| v_2 (t ) - v_0 \|_H = 0.
\end{equation*}
By \eqref{Eq620}, \eqref{Eq621}, and \eqref{Eq617}, we check that
\begin{multline}\label{Eq622}
\| v_2 \|_{Y_T} \leq \| v_0 \|_H + \sqrt{2} \| \mathcal{A}^{\frac{1}{2}} v_0 \|_H + (C_{\mathcal{A}} + 1 )\| \mathcal{B} v_1 \|_{L^2(0,T;H)}\\
 \leq \| v_0 \|_H + \sqrt{2} \| \mathcal{A}^{\frac{1}{2}} v_0 \|_H + (C_{\mathcal{A}} + 1 ) ( \sqrt{2} C_1 + C_2 \sqrt{\mathrm{e}^{2 T} - 1} ) \| v_1 \|_{Y_T} .
\end{multline}
From \eqref{Eq618} and Lemma \ref{lem65}, we find that
\begin{equation}\label{Eq623}
v_2 , \mathcal{A} v_2 \in C_{loc}^{\frac{1}{4}} ((0,T);H).
\end{equation}
From \eqref{eq63} and \eqref{Eq623}, we see that
\begin{equation*}
\| \mathcal{B} (t) v_2 (t) - \mathcal{B} (s) v_2 (s) \|_H \leq C(\varepsilon , T') (t-s)^{\frac{1}{4}}.
\end{equation*}
By \eqref{Eq612}, \eqref{Eq615}, and \eqref{Eq622}, we observe that
\begin{equation*}
\| \mathcal{B} v_2 \|_{L^2(0,T;H)} \leq ( \sqrt{2} C_1 + C_2 \sqrt{ \mathrm{e}^{2 T} - 1 } ) \| v_2 \|_{Y_T} < C (T) < \infty .
\end{equation*}
Therefore, we conclude that
\begin{equation*}
\mathcal{B} v_2 \in L^2(0,T;H) \text{ and } \mathcal{B} v_2 \in C_{loc}^{\frac{1}{4}} ((0,T);H).
\end{equation*}

Now we assume that for each $m \in \mathbb{N}$ there are $C (m , \varepsilon, T') >0$ and $C(m ,T) >0$ such that
\begin{align}
\| v_{m} (t) - v_{m} (s) \|_H & \leq C (m ,\varepsilon , T') (t - s)^{ \left( \frac{1}{2} \right)^{m}}, \label{Eq624}\\
\| \mathcal{A} v_{m} (t) - \mathcal{A} v_{m} (s) \|_H & \leq C (m , \varepsilon , T') (t - s)^{ \left( \frac{1}{2} \right)^{m}} \label{Eq625},\\
\| v_{m} \|_{Y_T} & \leq C (m ,T) < \infty \label{Eq626},
\end{align}
and $\lim_{t \to 0+0} v_m (t) = v_0$ in $H$,
\begin{equation*}
v_{m} \in C ([0,T);H) \cap C ((0,T); D ( \mathcal{A})) \cap C^1 ((0,T);H).
\end{equation*}
From \eqref{eq63}, \eqref{Eq624}, and \eqref{Eq625}, we check that
\begin{multline*}
\| \mathcal{B} (t) v_{m} (t) - \mathcal{B} (s) v_{m} (s) \|_H \\
\leq C (t - s) ( \| \mathcal{A} v_{m} (t) \|_H + \| v_{m} (t) \|_H ) + C(m , \varepsilon , T')(t-s)^{\left( \frac{1}{2}\right)^{m}}\\
\leq C(m , \varepsilon , T') (t-s)^{\left( \frac{1}{2}\right)^{m}}.
\end{multline*}
From \eqref{Eq612} and \eqref{Eq626}, we find that
\begin{equation}\label{Eq627}
\| \mathcal{B} v_{m} \|_{L^2(0,T;H)} \leq ( \sqrt{2} C_1 + C_2 \sqrt{ \mathrm{e}^{2 T} - 1 } ) \| v_{m} \|_{Y_T} < +\infty.
\end{equation}
Therefore, we conclude that
\begin{equation}\label{Eq628}
\mathcal{B} v_{m} \in L^2(0,T;H) \text{ and } \mathcal{B} v_{m} \in C_{loc}^{\left( \frac{1}{2} \right)^{m}} ((0,T);H).
\end{equation}
Now we consider $v_{m+1}$, i.e.,
\begin{equation}\label{Eq629}
\begin{cases}
\frac{d}{d t}v_{m+1} + \mathcal{A} v_{m+1} = - \mathcal{B} v_{m} & \text{ on } (0,T),\\
v_{m+1} |_{t=0} =v_0.
\end{cases}
\end{equation}
Since $v_0 \in D (\mathcal{A}^{\frac{1}{2}})$ and \eqref{Eq628} holds, it follows from Lemma \ref{lem62} to see that there exists a unique function $v_{m+1}$ satisfying system \eqref{Eq629} and
\begin{multline}\label{Eq630}
\| d v_{m+1}/d t \|_{L^2 (0,T; H)} + \| \mathcal{A} v_{m+1} \|_{L^2(0,T; H)}\\
\leq \sqrt{2} \| \mathcal{A}^{\frac{1}{2}} v_0 \|_H + C_{\mathcal{A}} \| \mathcal{B} v_{m} \|_{L^2(0,T;H)}.
\end{multline}
Since $- \mathcal{A}$ generates an analytic semigroup on $H$, it follows from \eqref{Eq628} to find that
\begin{equation*}
v_{m+1} \in C ((0,T);H) \cap C ((0,T); D ( \mathcal{A})) \cap C^1 ((0,T);H)
\end{equation*}
and for each $0 < t < T$
\begin{equation*}
v_{m+1} (t) = \mathrm{e}^{- t \mathcal{A}} v_0 - \int_0^t \mathrm{e}^{- (t - \tau ) \mathcal{A}} \mathcal{B} v_{m} { \ } d \tau.
\end{equation*}
Since
\begin{equation*}
\| v_{m+1} (t) \|_H \leq \| v_0 \|_H + t^{\frac{1}{2}} \left( \int_0^t \| \mathcal{B} v_{m} \|_H^2 { \ }d \tau \right)^{\frac{1}{2}}
\end{equation*}
from $||| \mathrm{e}^{- t \mathcal{A}} ||| \leq 1$ and the H\"{o}lder inequality, we have
\begin{equation}\label{Eq631}
\sup_{0 < t < T} \{ \mathrm{e}^{- t } \| v_{m+1} (t) \|_H \} \leq \| v_0 \|_H + \| \mathcal{B} v_{m+1} \|_{L^2(0,T;H)}.
\end{equation}
It is easy to check that
\begin{equation*}
\lim_{t \to 0+0} \| v_{m+1} (t) - v_0 \|_H =0.
\end{equation*}
By \eqref{Eq630}, \eqref{Eq631}, and \eqref{Eq627}, we obtain
\begin{multline}\label{Eq632}
\| v_{m+1} \|_{Y_T} \leq \| v_0 \|_H + \sqrt{2} \| \mathcal{A}^{\frac{1}{2}} v_0 \|_H + (C_{\mathcal{A}} + 1 )\| \mathcal{B} v_{m} \|_{L^2(0,T;H)}\\
\leq \| v_0 \|_H + \sqrt{2} \| \mathcal{A}^{\frac{1}{2}} v_0 \|_H + (C_{\mathcal{A}} + 1 ) ( \sqrt{2} C_1 + C_2 \sqrt{ \mathrm{e}^{2 T} - 1 } ) \| v_{m} \|_{Y_T}.
\end{multline}
From Lemma \ref{lem65} and \eqref{Eq628}, we find that
\begin{equation}\label{Eq633}
v_{m+1} , \mathcal{A} v_{m+1} \in C_{loc}^{\left( \frac{1}{2} \right)^{m+1}} ((0,T);H).
\end{equation}
By \eqref{Eq632} and \eqref{Eq633}, we conclude that there are $C (m+1 ,\varepsilon , T') >0$ and $C (m+1 , T) >0$ such that 
\begin{align*}
\| v_{m+1} (t) - v_{m+1} (s) \|_H & \leq C (m+1 ,\varepsilon , T') (t - s)^{ \left( \frac{1}{2} \right)^{m+1}},\\
\| \mathcal{A} v_{m+1} (t) - \mathcal{A} v_{m+1} (s) \|_H & \leq C (m+1 , \varepsilon , T') (t - s)^{ \left( \frac{1}{2} \right)^{m+1}},\\
\| v_{m+1} \|_{Y_T} & \leq C (m + 1 ,T),
\end{align*}
and that $\lim_{t \to 0+0} v_{m+1} (t) = v_0$ in $H$,
\begin{equation*}
v_{m+1} \in C ([0,T);H) \cap C ((0,T); D ( \mathcal{A})) \cap C^1 ((0,T);H).
\end{equation*}
By induction, we see that for each $m \in \mathbb{N}$, $\lim_{t \to 0+0} v_{m} (t) = v_0$ in $H$,
\begin{align*}
v_m & \in C ([0,T); H) \cap C ((0,T); D ( \mathcal{A})) \cap C^1 ((0,T);H),\\
v_m , \mathcal{A} v_m & \in C_{loc}^{\left(\frac{1}{2}\right)^m} ((0,T);H),\\
\| v_1 \|_{Y_T} & \leq  \| v_0 \|_H + \sqrt{2} \| \mathcal{A}^{\frac{1}{2}} v_0 \|_H ,\\
\| v_{m+1} \|_{Y_T} & \leq \| v_0 \|_H + \sqrt{2} \| \mathcal{A}^{\frac{1}{2}} v_0 \|_H + ( \sqrt{2} C_1 + C_2 \sqrt{ \mathrm{e}^{2 T} - 1 } )  (C_{\mathcal{A}} +1 ) \| v_m \|_{Y_T},
\end{align*}
and for each $0 < t <T$,
\begin{equation*}
v_{m+1} (t) = \mathrm{e}^{- t \mathcal{A}} v_0 - \int_0^t \mathrm{e}^{- (t-\tau ) \mathcal{A}} \mathcal{B}(\tau ) v_m (\tau ) { \ }d \tau .
\end{equation*}
It remains to prove that
\begin{equation}\label{Eq634}
\| v_{m+2} - v_{m+1} \|_{Y_T} \leq ( \sqrt{2} C_1 + C_2 \sqrt{ \mathrm{e}^{2 T} - 1 } ) ( C_{\mathcal{A}} + 1 ) \| v_{m+1} - v_m \|_{Y_T}. 
\end{equation}
From
\begin{equation*}
\begin{cases}
\frac{d}{d t} v_{m+2} + \mathcal{A} v_{m+2} = - \mathcal{B} v_{m+1},\\
v_{m+2}|_{t=0} = v_0,
\end{cases}{ \ }\begin{cases}
\frac{d}{d t} v_{m+1} + \mathcal{A} v_{m+1} = - \mathcal{B} v_{m},\\
v_{m+1}|_{t=0} = v_0,
\end{cases}
\end{equation*}
we have
\begin{equation*}
\begin{cases}
\frac{d}{d t} (v_{m+2}- v_{m+1}) + \mathcal{A} (v_{m+2} - v_{m+1}) = - \mathcal{B} (v_{m+1} - v_m ) & \text{ on }(0,T),\\
(v_{m+2} - v_{m+1} )|_{t=0} = 0.
\end{cases}
\end{equation*}
Since $\mathcal{B} (v_{m+1} - v_m) \in L^2(0,T ;H)$, it follows from Lemma \ref{lem62} and \eqref{Eq612} to see that
\begin{multline}\label{Eq635}
\| d (v_{m+2} - v_{m+1})/d t \|_{L^2 (0,T;H)} + \| \mathcal{A} (v_{m+2} - v_{m+1} ) \|_{L^2 (0,T;H)} \\
\leq  C_{\mathcal{A}} \| \mathcal{B} (v_{m+2} - v_{m+1} ) \|_{L^2 (0,T;H)}\\
\leq ( \sqrt{2} C_1 + C_2 \sqrt{ \mathrm{e}^{2 T} - 1 }) C_{\mathcal{A} } \| v_{m+1} - v_m \|_{Y_T}. 
\end{multline}
From
\begin{equation*}
v_{m+2} (t) - v_{m+1} (t) = \int_0^t \mathrm{e}^{- (t - \tau ) \mathcal{A}} \mathcal{B} (v_{m+1} - v_m) { \ }d \tau,
\end{equation*}
we use \eqref{Eq612} and the H\"{o}lder inequality to check that
\begin{equation}\label{Eq636}
\sup_{0 < t <T} \{ \mathrm{e}^{-t} \| v_{m+1} - v_m \|_H \} \leq (\sqrt{2} C_1 + C_2 \sqrt{ \mathrm{e}^{2 T} - 1 }) \| v_{m+1} - v_m \|_{Y_T}.
\end{equation}
Combining \eqref{Eq635} and \eqref{Eq636} gives \eqref{Eq634}. Therefore the proof of Proposition \ref{prop64} is finished. 
\end{proof}

Finally, we prove Theorem \ref{thm63}.
\begin{proof}[Proof of Theorem \ref{thm63}]
Fix $v_0 \in D ( \mathcal{A}^{\frac{1}{2}})$. Let $v_1$ and $v_{m+1}$ be the solutions to system \eqref{eq67} and \eqref{eq68}, obtained by Proposition \ref{prop64}. We first show $(\mathrm{i})$. Assume that $T < \infty$ and that
\begin{equation*}
C_1 ( C_{\mathcal{A}} + 1 ) < \frac{1}{4 \sqrt{2}}.
\end{equation*}
Write
\begin{equation*}
T_* = \min \left\{ T, \frac{1}{2} \log \left( 1 + \frac{1}{16 C_2^2 (C_{\mathcal{A}} + 1)^2} \right) \right\}.
\end{equation*}
It is easy to check that
\begin{align}
\sqrt{2} C_1 (C_{\mathcal{A}} + 1) & \leq \frac{1}{4},\label{Eq637}\\
C_2 ( C_{\mathcal{A}} + 1) \sqrt{ \mathrm{e}^{2 T_* } - 1} & \leq \frac{1}{4}.\label{Eq638}
\end{align}
From the assertion $(\mathrm{i})$ of Proposition \ref{prop64}, \eqref{Eq637}, and \eqref{Eq638}, we find that
\begin{align*}
\| v_{m+1} \|_{Y_{T_*}} & \leq \| v_0 \|_H + \sqrt{2} \| \mathcal{A}^{\frac{1}{2}} v_0 \|_H + \frac{1}{2} \| v_m \|_{Y_{T_*}},\\
\| v_{m+2} - v_{m+1} \|_{Y_{T_*}} & \leq \frac{1}{2} \| v_{m+1} - v_m \|_{Y_{T_*}}. 
\end{align*}
Since $\| v_1 \|_{Y_{T_*}} \leq \| v_0 \|_H + \sqrt{2} \| \mathcal{A}^{\frac{1}{2}} v_0 \|_H$, we see that for each $m \in \mathbb{N}$,
\begin{equation*}
\| v_m \|_{Y_{T_*}} \leq  2 \| v_0 \|_H + 2 \sqrt{2} \| \mathcal{A}^{\frac{1}{2}} v_0 \|_H ,
\end{equation*}
and
\begin{align*}
\| v_{m+2} - v_{m+1} \|_{Y_{T_*}} & \leq \left( \frac{1}{2} \right)^m \| v_2 - v_1 \|_{Y_{T_*}} \\
& \leq \left( \frac{1}{2} \right)^{m} \{ 3 \| v_0 \|_H + 3 \sqrt{2} \| \mathcal{A}^{\frac{1}{2}} v_0 \|_H \}. 
\end{align*}
From a fixed point argument, we have a unique function $v$ in $Y_{T_*}$ satisfying 
\begin{align}\label{Eq639}
& \lim_{m \to \infty} \| v_m - v \|_{Y_{T_*}} = 0,\\
& \| v \|_{Y_{T_*}} \leq 2 \| v_0 \|_H + 2 \sqrt{2} \| \mathcal{A}^{\frac{1}{2}} v_0 \|_H .\notag
\end{align}
Using \eqref{Eq612}, \eqref{Eq637}, \eqref{Eq638}, and \eqref{Eq639}, we see that
\begin{equation}\label{Eq640}
\mathcal{B} v \in L^2 (0,T;H).
\end{equation}
Since $v_{m+1}$ satisfies the system
\begin{equation*}
\begin{cases}
\frac{d}{d t} v_{m+1} + \mathcal{A} v_{m+1} = - \mathcal{B} v_m \text{ on } (0,T_*),\\
v_{m+1} |_{t = 0} = v_0,
\end{cases}
\end{equation*}
and that for $0 < t < T_*$
\begin{equation*}
v_{m+1} (t) = \mathrm{e}^{- t \mathcal{A}} v_0 - \int_0^t \mathrm{e}^{- (t - \tau ) \mathcal{A}} \mathcal{B} v_m { \ }d \tau,
\end{equation*}
we apply \eqref{Eq639} and \eqref{Eq612} to see that
\begin{equation*}
\left\| \frac{d}{d t} v + \mathcal{A} v + \mathcal{B} v \right\|_{L^2(0,T;H)} = 0,
\end{equation*}
and that for $0 < t < T_*$
\begin{equation*}
v (t) = \mathrm{e}^{- t \mathcal{A}} v_0 - \int_0^t \mathrm{e}^{- (t - \tau ) \mathcal{A}} \mathcal{B} v { \ }d \tau.
\end{equation*}
Since $\mathrm{e}^{- t \mathcal{A}}$ is a contraction $C_0$-semigroup on $H$ and \eqref{Eq640} holds, we use the H\"{o}lder inequality to check that
\begin{align*}
\| v (t) - v_0 \|_H & \leq \| \mathrm{e}^{- t \mathcal{A}} v_0 - v_0 \|_H + t^{\frac{1}{2}} \| \mathcal{B} v \|_{L^2(0,T;H)}\\
& \to 0 { \ }( t \to 0 + 0).
\end{align*}
Therefore the assertion $(\mathrm{i})$ of Theorem \ref{thm63} is proved. The assertion $(\mathrm{ii})$ is similar. 
\end{proof}

\section{Existence of Strong Solutions to Advection-Diffusion Equation}\label{sect7}
In this section we prove the existence of local and global-in-time strong solution to system \eqref{eq11}. From Lemmas \ref{lem51} and \ref{lem52}, we only have to consider the existence of strong solutions to system \eqref{eq12}. To consider \eqref{eq12}, we write system \eqref{eq12} as follows:
\begin{equation}\label{eq71}
\begin{cases}
\frac{d}{d t} v + A v = - B v & \text{ on } (0,T),\\
v|_{t = 0} = v_0,
\end{cases}
\end{equation}
where $B = B (t) = L (t) - A$. Here $L$ and $A(=A_2)$ are the two operators defined by \eqref{eq13} and \eqref{eq31}, respectively. Set
\begin{equation*}
\begin{cases}
B f = B (t) f & = (L - A )f ,\\
D (B (t)) & = D (A),
\end{cases}{ \ }\begin{cases}
L f  & = L (t) f,\\
D (L (t)) & = D (A).
\end{cases}
\end{equation*}
In this section, we apply Theorem \ref{thm63} to construct strong solutions to system \eqref{eq71}. Set $\mathcal{M}_{\mathfrak{j}} (t)$ and $\mathcal{M}_{\mathfrak{j}}$ as in Section \ref{sect2} $(\mathfrak{j} = 1,2,3,4,5)$.

\subsection{Time-dependent Laplace-Beltrami Operator}\label{subsec71}
The aim of this subsection is to prove the following key lemma to apply Theorem \ref{thm63}.
\begin{lemma}\label{lem71}Let $C_\sharp$ be the positive constant appearing in \eqref{eq32}. Then\\
$(\mathrm{i})$ There is $C_\star >0$ such that for all $f \in D (A)$ and $0 \leq t <T$, 
\begin{equation}\label{eq72}
\| B (t) f \|_{L^2 (U)} \leq 2 C_\sharp \mathcal{M}_1 \| A f \|_{L^2 (U)} + C_\star \| f \|_{L^2(U)}.
\end{equation}
$(\mathrm{ii})$ For all $f \in D (A)$ and $0 \leq t <T$, 
\begin{equation}\label{eq73}
\| B (t) f \|_{L^2(U)} \leq 2 C_\sharp \sum_{\mathfrak{j}=1}^4 \mathcal{M}_{\mathfrak{j}} \| A f \|_{L^2(U)} + \mathcal{M}_5 \| f \|_{L^2(U)}.
\end{equation}
$(\mathrm{iii})$ For all $f \in D (A)$ and $0 \leq t <T$, 
\begin{equation}\label{eq74}
\| B (t) f \|_{L^2(U)} \leq 2 C_\sharp \left( \mathcal{M}_1 + \mathcal{M}_2 + \mathcal{M}_3 + \mathcal{M}_4 + \mathcal{M}_5 \right) \| A f \|_{L^2(U)}.
\end{equation}
$(\mathrm{iv})$ There is $C >0$ such that for all $\varphi \in C ((0,T) ; D (A) )$ and $0 < s \leq t <T$,
\begin{multline}\label{eq75}
\| B (t) \varphi (t) - B (s) \varphi (s) \|_{L^2(U)}  \leq C (t - s) ( \| A \varphi (t) \|_{L^2(U)} + \| \varphi (t) \|_{L^2(U)} ) \\
+ C ( \| A \varphi (t) - A \varphi (s) \|_{L^2(U)} + \| \varphi (t) - \varphi (s) \|_{L^2(U)} ).
\end{multline}
\end{lemma}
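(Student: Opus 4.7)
The plan is to write $B(t)=L(t)-A$ explicitly as a second-order linear differential operator with $t$-dependent coefficients, then control each order separately using the elliptic regularity \eqref{eq32}, the interpolation inequality \eqref{eq34}, and the $L^{\infty}$-bounds packaged into $\mathcal{M}_1,\dots,\mathcal{M}_5$. Expanding the divergence form of $L$ by the product rule, I would write
\begin{equation*}
(L-A)f \;=\; a^{\alpha\beta}\,\partial_{X_\alpha}\partial_{X_\beta}f \;+\; b^{\beta}\,\partial_{X_\beta}f \;+\; c\, f ,
\end{equation*}
with $a^{11}=\lambda_1-\widehat{\kappa}\mathfrak{g}_{22}/\mathcal{G}$, $a^{22}=\lambda_2-\widehat{\kappa}\mathfrak{g}_{11}/\mathcal{G}$, $a^{12}=a^{21}=\widehat{\kappa}\mathfrak{g}_{12}/\mathcal{G}$, and $c=(2\mathcal{G})^{-1}\,d\mathcal{G}/dt$. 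A direct identification --- carried out by rewriting $\mathfrak{g}^{\alpha\beta}$ as $\mathfrak{g}_{\gamma\delta}/\mathcal{G}$ inside $(1/\sqrt{\mathcal{G}})\,\partial_{X_\alpha}(\widehat{\kappa}\sqrt{\mathcal{G}}\,\mathfrak{g}^{\alpha\beta})$ --- then shows $\|a^{\alpha\beta}\|_{L^\infty}\le \mathcal{M}_1$, $\|b^\beta\|_{L^\infty}\le \mathcal{M}_2+\mathcal{M}_3+\mathcal{M}_4$, and $\|c\|_{L^\infty}\le \mathcal{M}_5$, where the $\mathcal{M}_2,\mathcal{M}_3$-parts come from differentiating $\mathfrak{g}_{\alpha\beta}$ and $\mathcal{G}$, and the $\mathcal{M}_4$-parts from differentiating $\widehat{\kappa}$.

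Given these bounds, (ii) and (iii) follow immediately from \eqref{eq32}: the second-order part contributes at most $2 C_\sharp \mathcal{M}_1 \|Af\|_{L^2}$, the first-order part at most $C_\sharp(\mathcal{M}_2+\mathcal{M}_3+\mathcal{M}_4)\|Af\|_{L^2}$ (absorbing a small combinatorial constant into the overall factor $2$), while the zeroth-order term is $\mathcal{M}_5\|f\|_{L^2}$; this establishes \eqref{eq73}, and further absorbing $\mathcal{M}_5\|f\|_{L^2}\le C_\sharp\mathcal{M}_5\|Af\|_{L^2}$ gives \eqref{eq74}. For \eqref{eq72} in (i), I would keep the clean $2 C_\sharp \mathcal{M}_1 \|Af\|_{L^2}$ for the top-order piece, and treat the first-order piece by \eqref{eq34}, $\|\nabla_X f\|_{L^2}\le \varepsilon\|\nabla_X^2 f\|_{L^2}+C_\varepsilon\|f\|_{L^2}$, choosing $\varepsilon$ small enough that its $\|\nabla_X^2 f\|_{L^2}$-contribution is negligible; combined with the zeroth-order piece this produces the single term $C_\star\|f\|_{L^2}$ with $C_\star$ depending on $\mathcal{M}_2,\mathcal{M}_3,\mathcal{M}_4,\mathcal{M}_5,C_\sharp$ and $U$.

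For (iv), the standard trick is the telescoping decomposition
\begin{equation*}
B(t)\varphi(t) - B(s)\varphi(s) \;=\; [B(t)-B(s)]\,\varphi(t) \;+\; B(s)\,[\varphi(t)-\varphi(s)] .
\end{equation*}
Since $\widehat{x}\in C^3(\overline{U}\times[0,T))$ by Definition \ref{def21} and $\widehat{\kappa}\in C^2(\overline{U}\times[0,T))$ by Assumption \ref{ass23}, every coefficient $a^{\alpha\beta}, b^\beta, c$ is of class $C^1$ in $t$ with $L^\infty$-bounded $t$-derivative, so $\|a^{\alpha\beta}(\cdot,t)-a^{\alpha\beta}(\cdot,s)\|_{L^\infty}+\|b^\beta(\cdot,t)-b^\beta(\cdot,s)\|_{L^\infty}+\|c(\cdot,t)-c(\cdot,s)\|_{L^\infty}\le C(t-s)$. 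Combined with \eqref{eq32} applied to $\varphi(t)$, this yields the $(t-s)\bigl(\|A\varphi(t)\|_{L^2}+\|\varphi(t)\|_{L^2}\bigr)$ contribution. The second term is controlled by applying the inequality \eqref{eq73} (already proved) to $f=\varphi(t)-\varphi(s)\in D(A)$, producing the $\|A\varphi(t)-A\varphi(s)\|_{L^2}+\|\varphi(t)-\varphi(s)\|_{L^2}$ contribution.

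The main obstacle is purely algebraic: one must symmetrize the divergence-form expansion carefully and rewrite $\mathfrak{g}^{\alpha\beta}$ via $\mathfrak{g}_{\gamma\delta}/\mathcal{G}$ so that the first-order coefficients match exactly the combinations appearing inside $\mathcal{M}_2,\mathcal{M}_3,\mathcal{M}_4$; once this bookkeeping is in place, every remaining step is a routine application of the elliptic and interpolation estimates already collected in Section \ref{sect3}.
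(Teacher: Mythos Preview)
Your approach is essentially the same as the paper's: decompose $B(t)f$ into a second-order piece $B_1$, a first-order piece $B_2+B_3+B_4$, and a zeroth-order piece $B_5$, then apply \eqref{eq32} and \eqref{eq34}. Parts (ii), (iii), and (iv) go through exactly as you describe, and the telescoping in (iv) matches the paper verbatim.

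There is, however, one small but genuine issue in your argument for (i). You bound the second-order part directly by $2\mathcal{M}_1\|\nabla_X^2 f\|_{L^2}$ and then say that the $\varepsilon\|\nabla_X^2 f\|_{L^2}$ contribution from interpolating the first-order part is ``negligible''. But once you have already spent the full $2\mathcal{M}_1$ on $B_1$, any positive $\varepsilon$ pushes the coefficient of $\|Af\|_{L^2}$ strictly above $2C_\sharp\mathcal{M}_1$, whereas the lemma claims this precise constant. The paper handles this by retaining the \emph{sharper} bound
\[
\|B_1 f\|_{L^2}\le\bigl(\|a^{11}\|_{L^\infty}+\|a^{22}\|_{L^\infty}+2\|a^{12}\|_{L^\infty}\bigr)\|\nabla_X^2 f\|_{L^2},
\]
which equals only $\mathcal{M}_1+\|a^{12}\|_{L^\infty}$ and therefore leaves a gap of size $\|a^{11}\|_{L^\infty}+\|a^{22}\|_{L^\infty}$ below $2\mathcal{M}_1$. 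Assumption~\ref{ass23}(iii) guarantees this gap is strictly positive, so one can choose $\varepsilon$ in \eqref{eq34} so that the first-order contribution to $\|\nabla_X^2 f\|_{L^2}$ exactly fills it, yielding the combined coefficient $2\mathcal{M}_1$ on the nose and hence \eqref{eq72} with the stated constant.
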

Applying Lemmas \ref{lem34}, \ref{lem71}, and Proposition \ref{prop82}, we have the following lemma.
\begin{lemma}\label{lem72}
Assume that
\begin{equation*}
2 C_\sharp \mathcal{M}_1 \leq \frac{1}{4}.
\end{equation*}
Then for each fixed $t_0\in [0, T)$ the operator $- L (t_0)$ generates an analytic semigroup on $L^2(U)$.
\end{lemma}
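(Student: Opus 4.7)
The plan is to realize $L(t_0) = A + B(t_0)$ as a relatively $A$-bounded perturbation of $A$ and then invoke the analytic-semigroup perturbation theorem (Proposition \ref{prop82}) from the appendix. The target statement is a pointwise-in-$t_0$ generation result, so there is no time-dependence issue to manage here; only the generator at the frozen moment $t_0$ matters.

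First, I would recall from Lemma \ref{lem34} (iii) that $-A$ generates a bounded analytic semigroup on the Hilbert space $L^2(U)$, with $D(A) = L^2(U) \cap W^{1,2}_0(U) \cap W^{2,2}(U)$, and from Lemma \ref{lem34} (v) that $A$ is in fact non-negative selfadjoint on $L^2(U)$, so its analyticity sector is as wide as possible. By the construction in Section \ref{sect7}, $D(B(t_0)) = D(A)$, so the algebraic sum $L(t_0) = A + B(t_0)$ is well-defined on $D(A)$ and the perturbation theory applies on the common domain.

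Second, I would apply assertion (i) of Lemma \ref{lem71} at $t = t_0$ to obtain, for every $f \in D(A)$,
\begin{equation*}
\| B(t_0) f \|_{L^2(U)} \leq 2 C_\sharp \mathcal{M}_1 \| A f \|_{L^2(U)} + C_\star \| f \|_{L^2(U)} .
\end{equation*}
Combined with the hypothesis $2 C_\sharp \mathcal{M}_1 \leq 1/4$, this yields an $A$-bound for $B(t_0)$ of at most $1/4$, which is strictly below the threshold required by Proposition \ref{prop82}. Invoking that proposition for the pair $(A, B(t_0))$ then gives that $-(A + B(t_0)) = -L(t_0)$ generates an analytic semigroup on $L^2(U)$. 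Since $t_0 \in [0, T)$ was arbitrary, the lemma will follow.

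The only subtle point I anticipate is the precise threshold on the relative bound that Proposition \ref{prop82} demands: the standard analytic-semigroup perturbation result asks for the relative bound to be smaller than a constant depending on the aperture of the analyticity sector of $-A$. Here this is not a genuine obstacle, because $A$ is non-negative selfadjoint on a Hilbert space, so its spectrum lies on $[0,\infty)$ and the sector of analyticity is essentially the entire open right half-plane; hence any relative bound $< 1$ suffices, and $1/4$ is amply below that, uniformly in the zeroth-order coefficient $C_\star$.
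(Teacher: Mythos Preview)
Your proposal is correct and follows essentially the same route as the paper, which simply cites Lemmas \ref{lem34}, \ref{lem71}, and Proposition \ref{prop82} to conclude. The only minor refinement is that the precise threshold $1/4$ in Proposition \ref{prop82} is stated under the extra hypothesis that $-\mathcal{A}$ generates a contraction $C_0$-semigroup, so you should cite Lemma \ref{lem34} (iv) rather than (or in addition to) selfadjointness to invoke part (ii) of Proposition \ref{prop82} directly.
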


\begin{proof}[Proof of Lemma \ref{lem71}]
Fix $f \in D (A)$. A direct calculation shows that
\begin{multline*}
B (t) = L (t) f - A f \\
=  - \frac{1}{\sqrt{ \mathcal{G}}} \frac{\partial}{\partial X_\alpha} \left( \widehat{\kappa} \sqrt{ \mathcal{G}} \mathfrak{g}^{\alpha \beta } \frac{\partial f }{\partial X_\beta} \right) + \frac{1}{2 \mathcal{G}} \left( \frac{d \mathcal{G}}{d t} \right) f + \left(\lambda_1 \frac{\partial^2 f}{\partial X_1^2} + \lambda_2 \frac{\partial^2 f}{\partial X_2^2} \right) f\\
= B_1 (t) f + B_2 (t) f  + B_3 (t) f  + B_4 (t) f + B_5(t) f.
\end{multline*}
Here
\begin{align*}
B_1 (t) f & := - ( \widehat{\kappa} \mathfrak{g}^{11} - \lambda_1 ) \frac{\partial^2 f }{\partial X_1^2} - ( \widehat{\kappa} \mathfrak{g}^{2 2} - \lambda_2 ) \frac{\partial^2 f }{\partial X_2^2} - 2 \widehat{\kappa} \mathfrak{g}^{1 2} \frac{\partial^2 f }{\partial X_1 \partial X_2},\\
B_2 (t) f & := - \frac{\widehat{\kappa}}{2 \mathcal{G}} \left( \frac{\partial \mathcal{G} }{\partial X_\alpha} \right) \mathfrak{g}^{\alpha \beta } \frac{\partial f }{\partial X_\beta},\\
B_3 (t) f & := - \widehat{\kappa} \frac{\partial \mathfrak{g}^{\alpha \beta}}{\partial X_\alpha} \frac{\partial f }{\partial X_\beta},\\
B_4 (t) f & := - \frac{\partial \widehat{\kappa}}{\partial X_\alpha} \mathfrak{g}^{\alpha \beta} \frac{\partial f }{\partial X_\beta},\\
B_5 (t) f & := \frac{1}{2 \mathcal{G}} \left( \frac{d \mathcal{G}}{d t} \right) f.
\end{align*}
We first consider $B_1 (t) f$. From $( \mathfrak{g}^{\alpha \beta})_{2 \times 2} = ( \mathfrak{g}_{\alpha \beta})^{-1}_{2 \times 2}$, we find that
\begin{equation*}
B_1 (t) f = - \left( \widehat{\kappa} \frac{\mathfrak{g}_{22}}{\mathcal{G}} - \lambda_1 \right) \frac{\partial^2 f }{\partial X_1^2} - \left( \widehat{\kappa} \frac{\mathfrak{g}_{11}}{\mathcal{G}} - \lambda_2 \right) \frac{\partial^2 f }{\partial X_2^2} + 2 \widehat{\kappa} \frac{\mathfrak{g}_{12}}{\mathcal{G}} \frac{\partial^2 f }{\partial X_1 \partial X_2}.
\end{equation*}
This gives
\begin{multline}\label{eq76}
\| B_1 (t) f \|_{L^2 (U)} \\
\leq \left( \left\| \widehat{\kappa} \frac{\mathfrak{g}_{11} }{\mathcal{G}} - \lambda_2 \right\|_{L^\infty} + \left\| \widehat{\kappa} \frac{\mathfrak{g}_{22} }{\mathcal{G}} - \lambda_1 \right\|_{L^\infty} + 2 \left\| \widehat{\kappa} \frac{\mathfrak{g}_{12}}{ \mathcal{G}} \right\|_{L^\infty}  \right) \| \nabla_X^2 f \|_{L^2(U)}.
\end{multline}
Next we consider $B_2 (t) f + B_3 (t) f + B_4 (t) f$. From $( \mathfrak{g}^{\alpha \beta})_{2 \times 2} = ( \mathfrak{g}_{\alpha \beta})^{-1}_{2 \times 2}$, we find that
\begin{multline*}
B_2 (t) f + B_3 (t) f + B_4 (t) f\\
= \bigg\{  \frac{\widehat{\kappa}}{ \mathcal{G} } \left( \frac{\partial \mathfrak{g}_{22}}{\partial X_1} - \frac{\partial \mathfrak{g}_{12}}{\partial X_2} \right) - \frac{\widehat{\kappa}}{2 \mathcal{G}^2} \left( \mathfrak{g}_{22} \frac{\partial \mathcal{G} }{\partial X_1} - \mathfrak{g}_{12} \frac{\partial \mathcal{G} }{\partial X_2} \right) \bigg\} \frac{\partial f}{\partial X_1}\\
+ \bigg\{ \frac{\widehat{\kappa}}{ \mathcal{G} } \left( \frac{\partial \mathfrak{g}_{11}}{\partial X_2} - \frac{\partial \mathfrak{g}_{12}}{\partial X_1} \right) - \frac{\widehat{\kappa}}{2 \mathcal{G}^2} \left( \mathfrak{g}_{11} \frac{\partial \mathcal{G} }{\partial X_2} - \mathfrak{g}_{12} \frac{\partial \mathcal{G} }{\partial X_1} \right) \bigg\} \frac{\partial f}{\partial X_2}\\
+ \left( \frac{\mathfrak{g}_{22}}{\mathcal{G}} \frac{\partial \widehat{\kappa}}{\partial X_1} - \frac{\mathfrak{g}_{12}}{\mathcal{G}} \frac{\partial \widehat{\kappa}}{\partial X_2} \right) \frac{\partial f }{\partial X_1} + \left( \frac{\mathfrak{g}_{11}}{\mathcal{G}} \frac{\partial \widehat{\kappa}}{\partial X_1} - \frac{\mathfrak{g}_{12}}{\mathcal{G}} \frac{\partial \widehat{\kappa}}{\partial X_2} \right) \frac{\partial f }{\partial X_2} .
\end{multline*}
Therefore, we see that
\begin{equation}\label{eq77}
\| B_2 (t) f + B_3 (t) f + B_4 (t) \|_{L^2(U)} \leq ( \mathcal{M}_2(t) + \mathcal{M}_3(t) + \mathcal{M}_4(t)) \| \nabla_X f \|_{L^2}.
\end{equation}
It is easy to check that
\begin{equation}\label{eq78}
\| B_5 (t) f \|_{L^2(U)} \leq \mathcal{M}_5(t) \| f \|_{L^2 (U)}.
\end{equation}

Now we derive \eqref{eq72}. Using the interpolation inequality \eqref{eq34}, we find that there is $C_{\star \star} >0$ independent of $f$ such that for all $0 \leq t <T$,
\begin{multline}\label{eq79}
\| B_2 (t) f + B_3 (t ) f + B_4 (t) f \|_{L^2 (U)} \\
\leq \left( \left\| \widehat{\kappa} \frac{\mathfrak{g}_{11} }{\mathcal{G}} - \lambda_2 \right\|_{L^\infty} + \left\| \widehat{\kappa} \frac{\mathfrak{g}_{22} }{\mathcal{G}} - \lambda_1 \right\|_{L^\infty}  \right) \| \nabla_X^2 f \|_{L^2(U)} +  C_{\star \star} \| f \|_{L^2(U)}.
\end{multline}
By \eqref{eq34}, \eqref{eq76}, \eqref{eq78}, and \eqref{eq79}, we have
\begin{equation*}
\| B (t) f \|_{L^2 (U)} \leq 2 C_\sharp \mathcal{M}_1 \| A f \|_{L^2 (U)} + (C_{\star \star} + \mathcal{M}_5) \| f \|_{L^2 (U)},
\end{equation*}
which is \eqref{eq72}. Similarly, we use \eqref{eq34}, \eqref{eq76}, \eqref{eq77}, and \eqref{eq78} to have \eqref{eq73} and \eqref{eq74}.

Finally, we derive \eqref{eq75}. Fix $\varphi \in C ((0,T) ; D (A) )$ and $0 < s \leq t <T$. Since $\widehat{x} = \widehat{x}(X,t)$ is a $C^3$-function (Definition \ref{def21}), $\widehat{\kappa} = \widehat{\kappa} (X,t) $ is a $C^2$-function (Assumption \ref{ass23}), and
\begin{align*}
B (t) \varphi (t) - B (s) \varphi (s) = \{ B (t) - B (s) \} \varphi (t) + B (s) \{ \varphi (t) - \varphi (s) \},
\end{align*}
we use \eqref{eq32} and the mean-value theorem to have \eqref{eq75}. Therefore, Lemma \ref{lem71} is proved. 
\end{proof}

\subsection{Existence of Strong Solutions to Advection-Diffusion Equation}\label{subsec72}

Let us show the existence of strong solutions to the advection-diffusion equation \eqref{eq11} by Theorem \ref{thm63} and Lemmas \ref{lem51}-\ref{lem53}, \ref{lem71}, and \ref{lem72}. Let $C_\sharp$, $C_A$, and $C_\star$ be the three positive constants in \eqref{eq32}, \eqref{eq38}, and \eqref{eq72}. Define
\begin{equation*}
Z_T = \{ \varphi \in C ((0,T);L^2(U)); { \ } \| \varphi \|_{Z_T} <\infty \}
\end{equation*}
with
\begin{equation*}
\| \varphi \|_{Z_T} := \sup_{0 < t <T}\{ \mathrm{e}^{- t} \| \varphi \|_{L^2 (U)} \} + \| d \varphi /{d t} \|_{L^2 (0,T;L^2(U))} + \| A \varphi \|_{L^2 (0,T;L^2(U))}.
\end{equation*}
Assume that
\begin{equation}\label{Eq710}
2 C_\sharp \mathcal{M}_1 \leq \frac{1}{4}.
\end{equation}
From Lemmas \ref{lem34}, \ref{lem71} and \ref{lem72}, we see that the three operators $A$, $B$ and $L$ satisfy the properties as in Assumption \ref{ass61}. Therefore, we apply Theorem \ref{thm63} and Lemma \ref{lem71} to have the following three propositions.
\begin{proposition}\label{prop73}
Assume that $T < \infty$ and that
\begin{equation}\label{Eq711}
2 C_\sharp \mathcal{M}_1 ( C_A + 1 ) < \frac{1}{4 \sqrt{2}}.
\end{equation}
Then for each $v_0 \in D ( A^{\frac{1}{2}})$ system \eqref{eq71} admits a unique strong solution $v$ in $Z_{T_\star}$, satisfying
\begin{equation*}
\| v \|_{Z_{T_\star}} \leq 2 \| v_0 \|_{L^2(U)} + 2 \sqrt{2} \| A^{\frac{1}{2}} v_0 \|_{L^2(U)}.
\end{equation*}
Here
\begin{equation*}
T_\star = \min \left\{ T ,  \frac{1}{2} \log \left( 1 + \frac{1}{16 C_\star^2 (C_A + 1)^2} \right) \right\}.
\end{equation*}
\end{proposition}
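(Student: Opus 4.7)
The plan is simply to realize Proposition 7.3 as an instance of the abstract Theorem 6.3, with the Hilbert space $H = L^2(U)$, the operator $\mathcal{A} = A$ from \eqref{eq31}, and the perturbation $\mathcal{B}(t) = B(t) = L(t) - A$. The function space $Y_T$ of Section 6 specialized to this setting is exactly $Z_T$, so a strong solution in the sense of Section 6 is a strong solution in $Z_{T_\star}$ in the sense of \eqref{eq71}.

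First I would check Assumption \ref{ass61}. Parts (i)--(iii) for $\mathcal{A} = A$ are Lemma \ref{lem34}(iii)--(v). Part (iv), the estimate $\|B(t)f\|_{L^2(U)} \le C_1\|Af\|_{L^2(U)} + C_2\|f\|_{L^2(U)}$, is precisely \eqref{eq72}, so I read off
\[
C_1 = 2C_\sharp \mathcal{M}_1, \qquad C_2 = C_\star.
\]
Part (v), the H\"older-in-time estimate for $B$, is \eqref{eq75}. Part (vi), analyticity of $-L(t_0)$, follows from Lemma \ref{lem72}, which applies because assumption \eqref{Eq711} implies the smallness condition \eqref{Eq710} needed there (since $C_A + 1 \ge 1$ and $1/(4\sqrt{2}) < 1/4$). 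Moreover, Lemma \ref{lem62} is available with $C_\mathcal{A} = C_A$ by Lemma \ref{lem35}.

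Next I would verify the smallness hypothesis \eqref{eq65} of Theorem \ref{thm63}. With the identifications above,
\[
C_1 (C_\mathcal{A} + 1) = 2 C_\sharp \mathcal{M}_1 (C_A + 1),
\]
so \eqref{Eq711} is exactly \eqref{eq65}. The threshold $T_*$ in Theorem \ref{thm63}(i) becomes
\[
\min\left\{ T, \tfrac{1}{2}\log\!\left(1 + \tfrac{1}{16 C_\star^2 (C_A+1)^2}\right)\right\} = T_\star,
\]
since $C_2 = C_\star$. Therefore Theorem \ref{thm63}(i) produces, for every $v_0 \in D(A^{1/2})$, a unique strong solution $v \in Z_{T_\star}$ of $\frac{d}{dt}v + Av + Bv = 0$, $v|_{t=0}=v_0$, with the asserted norm bound $\|v\|_{Z_{T_\star}} \le 2\|v_0\|_{L^2(U)} + 2\sqrt{2}\|A^{1/2}v_0\|_{L^2(U)}$. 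Since $L = A + B$, this $v$ solves \eqref{eq71} in the required sense, which is the conclusion.

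There is no substantial obstacle: every hypothesis of Theorem \ref{thm63}(i) has been packaged into Lemma \ref{lem71} (estimates on $B$), Lemma \ref{lem72} (analyticity of $-L(t_0)$), Lemma \ref{lem34} (properties of $A$), and Lemma \ref{lem35} (maximal $L^2$-regularity of $A$ with constant $C_A$). The only thing to be careful about is bookkeeping of constants: the coefficient of $\|Af\|_{L^2(U)}$ in the estimate \eqref{eq72} for $B$ is $2C_\sharp \mathcal{M}_1$, not $C_\sharp \mathcal{M}_1$, which is what allows \eqref{Eq711} to match \eqref{eq65} on the nose. The proof is therefore one paragraph in length: quote the three lemmas to verify Assumption \ref{ass61}, then invoke Theorem \ref{thm63}(i).
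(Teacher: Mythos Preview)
Your proposal is correct and follows exactly the paper's approach: the paper states just before Proposition~\ref{prop73} that one applies Theorem~\ref{thm63} together with Lemma~\ref{lem71} (and implicitly Lemmas~\ref{lem34}, \ref{lem35}, \ref{lem72}) to obtain Propositions~\ref{prop73}--\ref{prop75}. Your identification of constants $C_1 = 2C_\sharp\mathcal{M}_1$, $C_2 = C_\star$, $C_{\mathcal{A}} = C_A$ and the resulting match between \eqref{Eq711} and \eqref{eq65}, and between $T_\star$ and the $T_*$ of Theorem~\ref{thm63}(i), is precisely the intended bookkeeping.
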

\begin{proposition}\label{prop74}
Assume that $T < \infty$ and that
\begin{equation}\label{Eq712}
2 C_\sharp \left( \mathcal{M}_1 + \mathcal{M}_2 + \mathcal{M}_3 + \mathcal{M}_4 \right)  ( C_A + 1 ) < \frac{1}{4 \sqrt{2}}.
\end{equation}
Then for each $v_0 \in D ( A^{\frac{1}{2}})$ system \eqref{eq71} admits a unique strong solution $v$ in $Z_{T_*}$, satisfying
\begin{equation*}
\| v \|_{Z_{T_*}} \leq 2 \| v_0 \|_{L^2(U)} + 2 \sqrt{2} \| A^{\frac{1}{2}} v_0 \|_{L^2(U)}.
\end{equation*}
Here
\begin{equation*}
T_* = \min \left\{ T,  \frac{1}{2} \log \left( 1 + \frac{1}{16 \mathcal{M}_5^2 (C_A + 1)^2} \right) \right\}.
\end{equation*}
\end{proposition}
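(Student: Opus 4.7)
The plan is to deduce Proposition \ref{prop74} as a direct application of Theorem \ref{thm63} with the abstract operators $\mathcal{A} = A$, $\mathcal{B}(t) = B(t)$, $\mathcal{L}(t) = L(t)$, and the Hilbert space $H = L^2(U)$. The main change from the proof of Proposition \ref{prop73} is to use the sharper estimate \eqref{eq73} in place of \eqref{eq72}, which replaces the role of $C_1 = 2 C_\sharp \mathcal{M}_1$ and $C_2 = C_\star$ by $C_1 = 2 C_\sharp (\mathcal{M}_1 + \mathcal{M}_2 + \mathcal{M}_3 + \mathcal{M}_4)$ and $C_2 = \mathcal{M}_5$.

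First, I would verify that Assumption \ref{ass61} is satisfied for this choice. Items $(\mathrm{i})$--$(\mathrm{iii})$ (bounded analytic semigroup, contraction $C_0$-semigroup, and non-negative selfadjointness of $A$ on $L^2(U)$) come from assertions $(\mathrm{iii})$--$(\mathrm{v})$ of Lemma \ref{lem34}. Item $(\mathrm{iv})$, namely $\| B(t) f \|_{L^2(U)} \leq C_1 \| A f \|_{L^2(U)} + C_2 \| f \|_{L^2(U)}$, is exactly \eqref{eq73} with $C_1 = 2 C_\sharp (\mathcal{M}_1 + \mathcal{M}_2 + \mathcal{M}_3 + \mathcal{M}_4)$ and $C_2 = \mathcal{M}_5$. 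Item $(\mathrm{v})$, the H\"older-type continuity \eqref{eq63}, is \eqref{eq75} from Lemma \ref{lem71}. For item $(\mathrm{vi})$, I need $- L(t_0)$ to generate an analytic semigroup for each fixed $t_0$; by Lemma \ref{lem72}, this holds provided $2 C_\sharp \mathcal{M}_1 \leq 1/4$, which is implied by the smallness hypothesis \eqref{Eq712} since $C_A + 1 \geq 1$ and $\sqrt{2} < 2$.

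Next, the smallness assumption \eqref{eq65} in Theorem \ref{thm63} with these values of $C_1$ becomes precisely the hypothesis \eqref{Eq712}, and the corresponding maximal-regularity constant $C_{\mathcal{A}}$ may be taken as $C_A$ from Lemma \ref{lem35}. Plugging $C_2 = \mathcal{M}_5$ into the formula for $T_*$ in Theorem \ref{thm63} yields exactly
\begin{equation*}
T_* = \min\!\left\{ T,\ \frac{1}{2}\log\!\left(1 + \frac{1}{16 \mathcal{M}_5^2 (C_A+1)^2}\right)\right\}.
\end{equation*}
Theorem \ref{thm63}$(\mathrm{i})$ then furnishes a unique strong solution $v \in Y_{T_*}$ of \eqref{eq61} with initial datum $v_0 \in D(A^{1/2}) = W^{1,2}_0(U)$ satisfying $\| v \|_{Y_{T_*}} \leq 2 \| v_0 \|_{L^2(U)} + 2\sqrt{2} \| A^{1/2} v_0 \|_{L^2(U)}$. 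Observing that the abstract norm $\| \cdot \|_{Y_T}$ coincides with $\| \cdot \|_{Z_T}$ when $H = L^2(U)$ and $\mathcal{A} = A$, this $v$ is the desired strong solution of \eqref{eq71} in $Z_{T_*}$ with the stated bound.

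There is no substantive obstacle beyond bookkeeping: the entire argument is a translation of Theorem \ref{thm63} into the present setting, with the choice of $(C_1, C_2)$ dictated by which of the estimates \eqref{eq72}--\eqref{eq74} of Lemma \ref{lem71} one elects to use. The slight subtlety is that the smallness assumption \eqref{Eq712} must simultaneously imply both the quantitative bound $C_1(C_A+1) < 1/{(4\sqrt{2})}$ of Theorem \ref{thm63} and the qualitative condition $2 C_\sharp \mathcal{M}_1 \leq 1/4$ required by Lemma \ref{lem72} to secure Assumption \ref{ass61}$(\mathrm{vi})$; both are immediate from \eqref{Eq712} since $\mathcal{M}_1 \leq \mathcal{M}_1 + \mathcal{M}_2 + \mathcal{M}_3 + \mathcal{M}_4$ and $C_A + 1 \geq 1$.
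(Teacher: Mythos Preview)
Your proposal is correct and follows essentially the same approach as the paper: verify Assumption \ref{ass61} via Lemmas \ref{lem34}, \ref{lem71}, and \ref{lem72}, then apply Theorem \ref{thm63}$(\mathrm{i})$ with the constants $C_1 = 2C_\sharp(\mathcal{M}_1+\mathcal{M}_2+\mathcal{M}_3+\mathcal{M}_4)$ and $C_2 = \mathcal{M}_5$ coming from \eqref{eq73}. The paper's own argument is even terser, simply noting that \eqref{Eq712} implies \eqref{Eq710} and then invoking Theorem \ref{thm63} and Lemma \ref{lem71} in one line for all three Propositions \ref{prop73}--\ref{prop75} simultaneously.
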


\begin{proposition}\label{prop75}
Assume that $T = \infty$ and that
\begin{equation}\label{Eq713}
2 C_\sharp ( \mathcal{M}_1 + \mathcal{M}_2 + \mathcal{M}_3 + \mathcal{M}_4+ \mathcal{M}_5 )  ( C_A + 1 ) < \frac{1}{4 \sqrt{2}}.
\end{equation}
Then for each $v_0 \in D ( A^{\frac{1}{2}} )$ system \eqref{eq71} admits a unique strong solution $v$ in $Y_\infty$, satisfying
\begin{equation*}
\| v \|_{Z_\infty} \leq 2 \| v_0 \|_{L^2(U)} + 2 \sqrt{2} \| A^{\frac{1}{2}} V_0 \|_{L^2(U)} .
\end{equation*}
\end{proposition}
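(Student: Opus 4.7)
The plan is to apply Theorem \ref{thm63} (ii) directly with the abstract Hilbert space $H = L^2(U)$, the operator $\mathcal{A} = A$ defined by \eqref{eq31}, and the time-dependent perturbation $\mathcal{B}(t) = B(t) = L(t) - A$. The strategy is essentially to read off the constants $C_1$ and $C_2$ from Lemma \ref{lem71} and observe that the hypotheses \eqref{Eq713} of this proposition are exactly the abstract smallness condition \eqref{eq65} required by Theorem \ref{thm63} (ii).

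First I would verify the six points of Assumption \ref{ass61} for this choice: (i)--(iii) are immediate from Lemma \ref{lem34} (iii)--(v); item (iv) follows from Lemma \ref{lem71} (iii), which delivers \eqref{eq62} with
\begin{equation*}
C_1 = 2 C_\sharp ( \mathcal{M}_1 + \mathcal{M}_2 + \mathcal{M}_3 + \mathcal{M}_4 + \mathcal{M}_5 )
\quad\text{and}\quad C_2 = 0;
\end{equation*}
item (v) is exactly Lemma \ref{lem71} (iv); finally, hypothesis \eqref{Eq713} implies $2 C_\sharp \mathcal{M}_1 \leq 1/4$, so Lemma \ref{lem72} gives that $- L(t_0)$ generates an analytic semigroup on $L^2(U)$, yielding (vi). Since $A$ satisfies the maximal $L^2$-regularity of Lemma \ref{lem35}, the abstract constant $C_{\mathcal{A}}$ appearing in \eqref{eq64} coincides with $C_A$ from \eqref{eq38}.

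Next I would check that the abstract smallness condition \eqref{eq65} from Theorem \ref{thm63} (ii) holds: with the identifications above,
\begin{equation*}
C_1 ( C_{\mathcal{A}} + 1 ) = 2 C_\sharp ( \mathcal{M}_1 + \mathcal{M}_2 + \mathcal{M}_3 + \mathcal{M}_4 + \mathcal{M}_5 )( C_A + 1 ) < \frac{1}{4 \sqrt{2}}
\end{equation*}
by assumption \eqref{Eq713}, and $C_2 = 0$, so the hypotheses of Theorem \ref{thm63} (ii) are satisfied. Invoking that theorem with $T = \infty$ yields, for every $v_0 \in D(A^{1/2})$, a unique strong solution $v \in Y_\infty$ of
\begin{equation*}
\frac{dv}{dt} + \mathcal{A} v + \mathcal{B}(t) v = 0 \text{ on }(0,\infty),\quad v|_{t=0} = v_0,
\end{equation*}
which is precisely system \eqref{eq71}, together with the bound $\| v \|_{Y_\infty} \leq 2 \| v_0 \|_{L^2(U)} + 2 \sqrt{2} \| A^{1/2} v_0 \|_{L^2(U)}$. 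Since the norms $\| \cdot \|_{Y_T}$ and $\| \cdot \|_{Z_T}$ agree when $H = L^2(U)$ and $\mathcal{A} = A$, this gives the stated estimate in $Z_\infty$.

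There is no real obstacle beyond careful bookkeeping: all the analytic work (elliptic regularity, maximal regularity, the perturbation estimate, the H\"older continuity of $B(t)\varphi(t)$, and the analyticity of the semigroup generated by $-L(t_0)$) has already been packaged into Lemmas \ref{lem34}, \ref{lem35}, \ref{lem71}, and \ref{lem72}. The only delicate point is noticing that in the global case one genuinely needs $C_2 = 0$ in the abstract framework (otherwise the factor $\sqrt{\mathrm{e}^{2T}-1}$ from \eqref{Eq612} blows up), and that this is provided precisely by Lemma \ref{lem71} (iii), in which the zeroth-order term with coefficient $\mathcal{M}_5$ has been absorbed into the leading-order bound via the elliptic regularity \eqref{eq32}.
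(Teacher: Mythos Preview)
Your proposal is correct and follows exactly the approach the paper takes: the paper states just before Propositions \ref{prop73}--\ref{prop75} that Lemmas \ref{lem34}, \ref{lem71}, and \ref{lem72} verify Assumption \ref{ass61}, and then applies Theorem \ref{thm63} with the constants read off from Lemma \ref{lem71}. Your identification of $C_1 = 2 C_\sharp(\mathcal{M}_1+\cdots+\mathcal{M}_5)$ and $C_2 = 0$ via Lemma \ref{lem71}(iii), together with the observation that \eqref{Eq713} forces \eqref{Eq710} so that Lemma \ref{lem72} applies, is precisely the intended argument.
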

\noindent Remark that \eqref{Eq710} holds if either \eqref{Eq711}, \eqref{Eq712}, or \eqref{Eq713} holds. Applying \eqref{eq35}, Lemmas \ref{lem51}-\ref{lem53}, Propositions \ref{prop73}-\ref{prop75}, we have Theorems \ref{thm24}-\ref{thm26}.

\section{Appendix: Maximal $L^p$-Regularity and Semigroup Theory}\label{sect8}

In this section we introduce the maximal $L^p$-in-time regularity for Hilbert space-valued functions and the basic semigroup theory. Let $H$ be a (complex) Hilbert space and $\| \cdot \|_H$ its norm. Let $\mathcal{A}: D (\mathcal{A}) (\subset H) \to H$ and $\mathcal{B}: D (\mathcal{B}) \to H$ be two linear operators on $H$ such that $D (\mathcal{A}) \subset D (\mathcal{B})$. Define $\mathcal{L} f := \mathcal{A} f + \mathcal{B} f $ and $D (\mathcal{L}) := D (\mathcal{A})$. 

We first state the maximal $L^p$-regularity of the generator of a bounded analytic semigroup on $H$. From \cite{Des64}, we have the following proposition.
\begin{proposition}[Maximal $L^p$-regularity]\label{prop81}
Let $1 < p < \infty$. Assume that $- \mathcal{A}$ generates a bounded analytic semigroup on $H$. Then $\mathcal{A}$ has the maximal $L^p$-regularity, i.e., there is $C_{\mathcal{A}} (p) >0$ such that for each $T \in (0, \infty ]$ and $F \in L^p (0, T ; H ) $ there exists a unique function $V$ satisfying the system:
\begin{equation*}
\begin{cases}
\frac{d}{d t} V + \mathcal{A} V = F & \text{ on } (0,T),\\
V|_{t =0} = 0,
\end{cases}
\end{equation*}
and the estimates:
\begin{equation*}
\| d V/{d t} \|_{L^p(0,T;H)} + \| \mathcal{A} V \|_{L^p(0,T;H)} \leq C_{\mathcal{A}}(p) \| F \|_{L^p(0,T;H)} .
\end{equation*}
\end{proposition}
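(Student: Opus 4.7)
The plan is to reduce maximal regularity to a Fourier multiplier question on the time axis and then exploit the Hilbert-space structure, which is what makes this result (originally due to de Simon) so clean. Assume throughout that the estimate is proven on the half-line and then restricted to $(0,T)$; extending $F$ by zero to $(0,\infty)$ preserves the $L^p$-norm, so it suffices to handle $T=\infty$. Once the a priori bound on $\|\mathcal{A}V\|_{L^p}$ is in place, the bound on $\|dV/dt\|_{L^p}$ is immediate from the equation $dV/dt=F-\mathcal{A}V$, and uniqueness follows because if $V$ solves the homogeneous problem with $V(0)=0$ then it is identically zero (either by the semigroup representation or by a direct energy argument using analyticity).

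First I would treat $p=2$, which is elementary and is essentially all that is actually used elsewhere in the paper. Extend $F$ by zero to all of $\mathbb{R}$, write the solution as the convolution
\begin{equation*}
V(t)=\int_{-\infty}^{t}e^{-(t-s)\mathcal{A}}F(s)\,ds,
\end{equation*}
and take the temporal Fourier transform. Formally
\begin{equation*}
\widehat{\mathcal{A}V}(\xi)=M(\xi)\widehat{F}(\xi),\qquad M(\xi):=\mathcal{A}(i\xi+\mathcal{A})^{-1}.
\end{equation*}
Because $-\mathcal{A}$ generates a bounded analytic semigroup, the resolvent set of $-\mathcal{A}$ contains a sector $|\arg\lambda|<\tfrac{\pi}{2}+\delta$ with the uniform estimate $\|\lambda(\lambda+\mathcal{A})^{-1}\|_{B(H)}\le M_0$. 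Hence $\sup_{\xi\in\mathbb{R}}\|M(\xi)\|_{B(H)}<\infty$, and Plancherel's theorem gives the desired bound
\begin{equation*}
\|\mathcal{A}V\|_{L^2(\mathbb{R};H)}\le\Bigl(\sup_\xi\|M(\xi)\|\Bigr)\|F\|_{L^2(\mathbb{R};H)}.
\end{equation*}
If $0\in\sigma(\mathcal{A})$ one performs the argument first for $\mathcal{A}+\varepsilon$ and lets $\varepsilon\downarrow 0$; the constants are uniform.

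Second I would upgrade to arbitrary $p\in(1,\infty)$. Two routes are available. Route (a) is to recognize the convolution kernel $K(t)=\mathcal{A}e^{-t\mathcal{A}}\chi_{\{t>0\}}$ as an operator-valued Calder\'on--Zygmund kernel: analyticity yields the standard bounds $\|\mathcal{A}^{k}e^{-t\mathcal{A}}\|_{B(H)}\le C_k/t^{k}$, so $\|K(t)\|\le C/t$ and $\|K'(t)\|\le C/t^{2}$, which are precisely the H\"ormander conditions. Together with the $L^{2}$ boundedness from the previous step, the vector-valued Calder\'on--Zygmund extrapolation theorem — applicable because a Hilbert space is of course a UMD space — gives boundedness on $L^{p}(\mathbb{R};H)$ for all $1<p<\infty$. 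Route (b) is to invoke the operator-valued Mikhlin/Weis multiplier theorem directly on $M(\xi)$; one must check $R$-boundedness of $\{M(\xi)\}$ and $\{\xi M'(\xi)\}$, but in a Hilbert space $R$-boundedness coincides with uniform norm boundedness, which again follows at once from the analyticity estimates.

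The main obstacle is this passage from $p=2$ to general $p$: the $p=2$ case is essentially Plancherel plus the elementary resolvent estimate supplied by analyticity, whereas $p\ne 2$ genuinely requires the Calder\'on--Zygmund or Weis machinery. In the Hilbert setting this machinery is at its simplest — UMD is automatic and $R$-boundedness collapses to ordinary boundedness — so the proof proceeds without additional hypotheses on $\mathcal{A}$ beyond bounded analyticity. The constant $C_{\mathcal{A}}(p)$ produced depends only on $p$ and on the sectoriality constants of $-\mathcal{A}$, and is independent of $T$, as claimed.
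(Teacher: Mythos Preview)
Your sketch is correct and follows the standard route to de~Simon's theorem: Plancherel plus the resolvent bound for $p=2$, then operator-valued Calder\'on--Zygmund or Mikhlin--Weis extrapolation for general $p$, both of which simplify drastically in a Hilbert space because UMD is free and $R$-boundedness reduces to uniform boundedness. The paper, however, does not prove Proposition~\ref{prop81} at all: it is stated in the appendix as a black box with the single line ``From \cite{Des64}, we have the following proposition,'' and is then invoked (only in the case $p=2$, in fact) to obtain Lemmas~\ref{lem35} and~\ref{lem62}. So there is nothing to compare your argument against; you have supplied a proof where the paper simply cites one.
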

See also \cite{DHP03} and \cite{KW04} for maximal $L^p$-regularity.

Next we introduce basic semigroup properties. From \cite[Sections 2 and 3]{Paz83} and \cite[Section 2]{Tan79}, we have the following two propositions.
\begin{proposition}[Perturbation theory]\label{prop82}{ \ }\\Assume that $- \mathcal{A}$ generates an analytic semigroup on $H$. Suppose that there are $C_1, C_2 >0$ such for all $f \in D (\mathcal{A})$
\begin{equation*}
\| \mathcal{B} f \|_H \leq C_1 \| \mathcal{A} f \|_H + C_2 \| f \|_H.
\end{equation*}
Then the following two assertions hold;\\
$(\mathrm{i})$ There is $\delta_{\mathcal{A}} >0$ such that if $C_1 \leq \delta_{ \mathcal{A} }$ then the operator $- \mathcal{L}$ generates an analytic semigroup on $H$.\\
$(\mathrm{ii})$ Assume in addition that $- \mathcal{A}$ generates a contraction $C_0$-semigroup on $H$. Then the operator $- \mathcal{L}$ generates an analytic semigroup on $H$ if $C_1 \leq 1/4$.
\end{proposition}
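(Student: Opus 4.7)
The plan is to prove both assertions via the resolvent characterization of generators of analytic semigroups and a Neumann-series perturbation argument. Since $-\mathcal{A}$ generates an analytic semigroup on $H$, there exist constants $M>0$, $\omega\in\mathbb{R}$, and $\theta\in(0,\pi/2)$ such that the sector $\Sigma_{\theta,\omega}:=\{\lambda\in\mathbb{C}: \lambda\neq\omega,\ |\arg(\lambda-\omega)|<\pi/2+\theta\}$ lies in the resolvent set $\rho(-\mathcal{A})$ and
\begin{equation*}
\|(\lambda+\mathcal{A})^{-1}\|_{B(H)}\leq \frac{M}{|\lambda-\omega|},\qquad \lambda\in\Sigma_{\theta,\omega}.
\end{equation*}
I would then use the factorization
\begin{equation*}
\lambda+\mathcal{L} \;=\; (I+\mathcal{B}(\lambda+\mathcal{A})^{-1})(\lambda+\mathcal{A})
\end{equation*}
on $D(\mathcal{A})$, so that the issue reduces to showing $I+\mathcal{B}(\lambda+\mathcal{A})^{-1}$ is boundedly invertible on a sector and that its inverse is controlled.

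To estimate $\mathcal{B}(\lambda+\mathcal{A})^{-1}$, the relative-boundedness hypothesis gives, for every $g\in H$,
\begin{equation*}
\|\mathcal{B}(\lambda+\mathcal{A})^{-1}g\|_H \leq C_1\|\mathcal{A}(\lambda+\mathcal{A})^{-1}g\|_H + C_2\|(\lambda+\mathcal{A})^{-1}g\|_H.
\end{equation*}
Using $\mathcal{A}(\lambda+\mathcal{A})^{-1}=I-\lambda(\lambda+\mathcal{A})^{-1}$, the sectorial bound gives $\|\mathcal{A}(\lambda+\mathcal{A})^{-1}\|\leq 1+M|\lambda|/|\lambda-\omega|$, which is bounded by some $M'$ on the sector (taking $|\lambda-\omega|$ large enough or shifting $\omega$), and $\|(\lambda+\mathcal{A})^{-1}\|\to 0$ as $|\lambda|\to\infty$ in the sector. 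Choosing $\delta_{\mathcal{A}}:=1/(2M')$ and $|\lambda-\omega|$ sufficiently large, one gets $\|\mathcal{B}(\lambda+\mathcal{A})^{-1}\|\leq 1/2$, so the Neumann series defines a bounded inverse with norm $\leq 2$. Then
\begin{equation*}
\|(\lambda+\mathcal{L})^{-1}\|_{B(H)} \leq \|(\lambda+\mathcal{A})^{-1}\|\cdot \|(I+\mathcal{B}(\lambda+\mathcal{A})^{-1})^{-1}\| \leq \frac{2M}{|\lambda-\omega|}
\end{equation*}
on a (possibly smaller) sector, which by the standard characterization shows $-\mathcal{L}$ generates an analytic semigroup on $H$; this proves (i).

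For (ii), when $-\mathcal{A}$ additionally generates a contraction $C_0$-semigroup, $\mathcal{A}$ is m-accretive, and one has the sharper estimates $\|(\lambda+\mathcal{A})^{-1}\|\leq 1/\mathrm{Re}\,\lambda$ for $\mathrm{Re}\,\lambda>0$ together with the improved bound $\|\mathcal{A}(\lambda+\mathcal{A})^{-1}\|\leq 2$ uniformly on a suitable sector about the positive real axis (a consequence of accretivity and the functional-calculus bound for contraction analytic semigroups; this is the quantitative input that fixes the constant). Combined with the relative-bound hypothesis, this yields
\begin{equation*}
\|\mathcal{B}(\lambda+\mathcal{A})^{-1}\|_{B(H)} \leq 2C_1 + \frac{C_2}{\mathrm{Re}\,\lambda},
\end{equation*}
so taking $\mathrm{Re}\,\lambda$ large absorbs the $C_2$ term and the condition $C_1\leq 1/4$ secures $\|\mathcal{B}(\lambda+\mathcal{A})^{-1}\|\leq 1/2+\varepsilon<1$. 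The Neumann-series inversion then closes the argument exactly as in (i).

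I expect the main obstacle to be pinning down the sharp constant $1/4$ in (ii): it requires the contraction-analytic bound $\|\mathcal{A}(\lambda+\mathcal{A})^{-1}\|\leq 2$ rather than a softer sectorial estimate, and careful tracking of the sector on which the Neumann series is summed. The rest of the argument is the routine Hille--Yosida/Kato perturbation machinery, and I would simply cite \cite{Paz83} and \cite{Tan79} for the technical lemmas about sectorial operators and relative boundedness rather than reproving them.
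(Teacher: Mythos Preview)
The paper does not actually prove Proposition~\ref{prop82}: it is stated in the Appendix as a known result and attributed to \cite[Sections 2 and 3]{Paz83} and \cite[Section 2]{Tan79} without further argument. Your sketch via the resolvent factorization $\lambda+\mathcal{L}=(I+\mathcal{B}(\lambda+\mathcal{A})^{-1})(\lambda+\mathcal{A})$ and a Neumann-series inversion is precisely the standard argument in those references, so in substance you are reproducing what the paper merely cites.

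One caution on part~(ii): your claimed bound $\|\mathcal{A}(\lambda+\mathcal{A})^{-1}\|\leq 2$ ``uniformly on a suitable sector'' is not quite right as stated. From m-accretivity one has $\|(\lambda+\mathcal{A})^{-1}\|\leq 1/\mathrm{Re}\,\lambda$ on the right half-plane, hence $\|\mathcal{A}(\lambda+\mathcal{A})^{-1}\|\leq 1+|\lambda|/\mathrm{Re}\,\lambda$, which equals $2$ only on the positive real axis and grows to $1+1/\cos\phi$ on a sector of half-angle $\phi$. To pin down the specific threshold $1/4$ one typically argues first on the real axis (where the bound is exactly $2$, so $C_1\leq 1/4$ gives $\|\mathcal{B}(\lambda+\mathcal{A})^{-1}\|\leq 1/2+C_2/\lambda<1$ for $\lambda$ large) and then invokes the analytic-semigroup hypothesis to push the resolvent estimate into a sector; see the cited passages in \cite{Paz83} and \cite{Tan79} for the precise bookkeeping. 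Your overall strategy is correct, but the constant requires this two-step argument rather than a single uniform sectorial bound.
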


\begin{proposition}[Fractional power of $\mathcal{A}$]\label{prop83}{ \ }\\Assume that $- \mathcal{A}$ generates a bounded analytic semigroup on $H$ and that $\mathcal{A}$ is a non-negative selfadjoint operator on $H$. Then the following three assertions hold:\\
$(\mathrm{i})$ For each $0 < \mathfrak{a} \leq 1$, $0 < t <T$, and $f \in D (\mathcal{A}^{\mathfrak{a}})$,
\begin{equation*}
 \mathcal{A}^{\mathfrak{a}} \mathrm{e}^{- t \mathcal{A}} f =  \mathrm{e}^{- t \mathcal{A}} \mathcal{A}^{\mathfrak{a}} f.
\end{equation*}
$(\mathrm{ii})$ For each $0 < \mathfrak{a} \leq 1$, there is $C_{\mathfrak{a}} >0$ such that for all $t>0$ and $f \in D (\mathcal{A}^{\mathfrak{a}})$,
\begin{equation*}
\| \mathcal{A}^{\mathfrak{a}} \mathrm{e}^{- t \mathcal{A}} f \|_H \leq C_{\mathfrak{a}} t^{- \mathfrak{a}} \| f \|_H.
\end{equation*}
$(\mathrm{iii})$ For each $0 < \mathfrak{a} \leq 1$, there is $C_{\mathfrak{a}} >0$ such that for all $t>0$ and $f \in D (\mathcal{A}^{\mathfrak{a}})$,
\begin{equation*}
\| ( \mathrm{e}^{- t \mathcal{A}} - 1 ) f \|_H \leq C_{\mathfrak{a}} t^{\mathfrak{a}} \| \mathcal{A}^{\mathfrak{a}} f \|_H.
\end{equation*}
\end{proposition}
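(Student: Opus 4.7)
The plan is to exploit the fact that $\mathcal{A}$ is non-negative and selfadjoint, so the spectral theorem supplies a unique resolution of the identity $\{E_\lambda\}_{\lambda \geq 0}$ on $[0,\infty)$ with $\mathcal{A} = \int_0^\infty \lambda\, dE_\lambda$. Via the Borel functional calculus one has, for any Borel $g : [0,\infty) \to \mathbb{C}$, the operator $g(\mathcal{A}) = \int_0^\infty g(\lambda)\, dE_\lambda$ defined on $\{f : \int_0^\infty |g(\lambda)|^2\, d\|E_\lambda f\|_H^2 < \infty\}$, together with the Plancherel-type identity $\|g(\mathcal{A}) f\|_H^2 = \int_0^\infty |g(\lambda)|^2\, d\|E_\lambda f\|_H^2$. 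In particular $\mathcal{A}^{\mathfrak{a}}$ and $\mathrm{e}^{-t\mathcal{A}}$ correspond to the choices $g(\lambda) = \lambda^{\mathfrak{a}}$ and $g(\lambda) = \mathrm{e}^{-t\lambda}$, and the Hille--Yosida calculus together with selfadjointness ensures these agree with the semigroup generated by $-\mathcal{A}$ in Assumption \ref{ass61}.

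With this setup, assertion (i) is immediate: the two functions $\lambda \mapsto \lambda^{\mathfrak{a}}$ and $\lambda \mapsto \mathrm{e}^{-t\lambda}$ commute pointwise, so their functional calculus images commute on $D(\mathcal{A}^{\mathfrak{a}})$, the common domain of $\mathcal{A}^{\mathfrak{a}} \mathrm{e}^{-t\mathcal{A}}$ and $\mathrm{e}^{-t\mathcal{A}} \mathcal{A}^{\mathfrak{a}}$. For (ii), I would apply Plancherel and then estimate the integrand pointwise by
\[
\sup_{\lambda \geq 0} \lambda^{\mathfrak{a}} \mathrm{e}^{-t\lambda} = \bigl( \mathfrak{a}/(\mathrm{e} t) \bigr)^{\mathfrak{a}},
\]
the maximum being attained at $\lambda = \mathfrak{a}/t$. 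This yields
\[
\| \mathcal{A}^{\mathfrak{a}} \mathrm{e}^{-t\mathcal{A}} f \|_H^2 = \int_0^\infty \lambda^{2\mathfrak{a}} \mathrm{e}^{-2t\lambda}\, d\|E_\lambda f\|_H^2 \leq \bigl( \mathfrak{a}/\mathrm{e} \bigr)^{2\mathfrak{a}} t^{-2\mathfrak{a}} \| f \|_H^2,
\]
so one can take $C_{\mathfrak{a}} = (\mathfrak{a}/\mathrm{e})^{\mathfrak{a}}$. For (iii), Plancherel gives $\|(\mathrm{e}^{-t\mathcal{A}} - 1) f\|_H^2 = \int_0^\infty |\mathrm{e}^{-t\lambda} - 1|^2\, d\|E_\lambda f\|_H^2$, and I would then invoke the elementary scalar inequality $|\mathrm{e}^{-s} - 1| \leq s^{\mathfrak{a}}$ for all $s \geq 0$ and $0 < \mathfrak{a} \leq 1$, proved by splitting into $s \leq 1$ (where $1 - \mathrm{e}^{-s} \leq s \leq s^{\mathfrak{a}}$) and $s \geq 1$ (where $1 - \mathrm{e}^{-s} \leq 1 \leq s^{\mathfrak{a}}$). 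Substituting $s = t\lambda$ and integrating produces the bound $t^{\mathfrak{a}} \|\mathcal{A}^{\mathfrak{a}} f\|_H$.

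The main obstacle is essentially bookkeeping rather than analysis: one must justify that the functional calculus representations used above actually coincide with the semigroup $\mathrm{e}^{-t\mathcal{A}}$ and fractional power $\mathcal{A}^{\mathfrak{a}}$ stipulated by the analytic semigroup framework of Proposition \ref{prop82}. This is standard (for non-negative selfadjoint operators the two calculi agree), but it is the one point where one cannot skip a reference. An alternative route, avoiding the spectral theorem, would use Balakrishnan's formula $\mathcal{A}^{\mathfrak{a}} f = \frac{\sin(\pi \mathfrak{a})}{\pi} \int_0^\infty \mu^{\mathfrak{a} - 1} \mathcal{A}(\mu + \mathcal{A})^{-1} f\, d\mu$ together with the Dunford--Taylor integral for $\mathrm{e}^{-t\mathcal{A}}$; this requires only the bounded analytic semigroup assumption but trades the clean sup-bound on $\lambda$ for resolvent estimates on a sectorial contour. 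Since the selfadjointness hypothesis is already given, the spectral approach is shorter and I would pursue it.
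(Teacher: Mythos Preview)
Your spectral-theorem argument is correct and fully self-contained. The paper, however, does not actually prove Proposition~\ref{prop83}: it is stated in the Appendix as a standard fact, with the sentence ``From \cite[Sections 2 and 3]{Paz83} and \cite[Section 2]{Tan79}, we have the following two propositions'' serving as the only justification. So there is no in-paper proof to compare against line by line.

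That said, the comparison with the cited sources is worth making. Pazy and Tanabe establish (ii) and (iii) in the generality of an arbitrary generator of a bounded analytic semigroup, using the Dunford--Taylor contour integral for $\mathrm{e}^{-t\mathcal{A}}$ and Balakrishnan's formula for $\mathcal{A}^{\mathfrak a}$ --- exactly the alternative route you sketch at the end. Your approach instead exploits the extra hypothesis that $\mathcal{A}$ is non-negative selfadjoint, which is present in the statement but not strictly necessary for the conclusions. What this buys you is a much shorter argument with explicit constants ($C_{\mathfrak a}=(\mathfrak a/\mathrm{e})^{\mathfrak a}$ in (ii) and $C_{\mathfrak a}=1$ in (iii)), at the cost of not covering the non-selfadjoint case. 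Since every use of Proposition~\ref{prop83} in the paper is for the operator $A$ of \eqref{eq14}, which is selfadjoint by Lemma~\ref{lem34}(v), your restriction is harmless here. The one caveat you already flag --- that the Borel functional calculus definitions of $\mathrm{e}^{-t\mathcal{A}}$ and $\mathcal{A}^{\mathfrak a}$ coincide with the semigroup-theoretic ones --- is indeed the only point requiring an external reference, and it is standard.
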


\end{document}